\newcommand{\supp}{{\Phi}}
\newcommand\spec{{\mathrm{spec}}}
\newtheorem{theorem}{Theorem}[section]
\newtheorem{lema}[theorem]{Lemma}
\newtheorem{corollary}[theorem]{Corollary}
\newtheorem{proposition}[theorem]{Proposition}
\newtheorem{remark}[theorem]{Remark}
\newtheorem{definition}[theorem]{Definition}
\newtheorem{teorema}[theorem]{Theorem}
\newtheorem{problem}{Problem}
\begin{document}

%\linenumbers

\begin{frontmatter}

\title{Structured eigenbases and pair state transfer on threshold graphs}

\author[heber,Leoaddress1]{Leonardo de Lima}
\ead{leonardo.delima@ufpr.br}

\author[renata]{Renata Del-Vecchio}
\ead{rrdelvecchio@id.uff.br}

\author[HM]{Hermie Monterde}
\ead{Hermie.Monterde@uregina.ca}

\author[heber]{Heber Teixeira }
\ead{hebercristina@ufpr.br}

\address[heber]{Programa de Pós-Graduação em Matemática, Universidade Federal do Paraná, Paraná, Brasil}

\address[renata]{Instituto de Matemática e Estatística, Universidade Federal Fluminense, Niterói, Brasil}

\address[HM]{Department of Mathematics and Statistics, University of Regina, Regina, SK, Canada S4S 0A2}

\address[Leoaddress1]{Departamento de Administração Geral e Aplicada, Universidade Federal do Paraná, Paraná, Brasil}

\begin{abstract}
Recently, Macharete, Del-Vecchio, Teixeira and de Lima showed that a star and any threshold graph on the same number of vertices share the same eigenbasis relative to the Laplacian matrix. We use this fact to establish two main results in this paper. The first one is a characterization of threshold graphs that are \textit{simply structured}, i.e., their associated Laplacian matrices have eigenbases consisting of vectors with entries from the set $\{-1,0,1\}$. Then, we provide sufficient conditions such that a simply structured threshold graph is weakly Hadamard diagonalizable (WHD). This allows us to list all connected simply structured threshold graphs on at most 20 vertices, and identify those that are WHD. Second, we characterize Laplacian pair state transfer on threshold graphs. In particular, we show that the existence of Laplacian vertex state transfer and Laplacian pair state transfer on a threshold graph are equivalent if and only if it is not a join of a complete graph and an empty graph of certain sizes. 
\end{abstract}

\begin{keyword}
Threshold graph \sep Laplacian matrix \sep eigenspaces \sep weakly Hadamard diagonalizable graph \sep quantum walk \sep perfect state transfer
\MSC[2010] 05C50, 15A18, 81P45
\end{keyword}

\end{frontmatter}

%%%%%%%%%%%%%%%%%%%%%%%%%%%%%%%%%%%%%%%%
%%%%%%%%%%%%%%%%%%%%%%%%%%%%%%%%%%%%%%%%
%%%%%%%%%%%%%%%%%%%%%%%%%%%%%%%%%%%%%%%%

\section{Introduction}\label{intro}

Unraveling graph properties through the eigenvectors of matrices associated with graphs constitutes a central topic in spectral graph theory. In particular, graphs with eigenspaces spanned by eigenvectors whose entries are restricted to the sets $\{-1,1\}$ and $\{- 1,0,1 \}$ have attracted attention in recent years \cite{WHD, AL21, ALN23, Caputo, JP25,  MDTL24, WeakMatrices}. The existence of such structurally simple eigenspaces not only presents significant computational advantages \cite{SS07}, but also offers a more straightforward eigenvector analysis \cite{SS08}. Hadamard diagonalizable (HD) graphs are an example of such a family. These are graphs whose Laplacian matrices are diagonalized by a Hadamard matrix \cite{barik2011}. Later, the concept of weakly Hadamard diagonalizable (WHD) graphs was introduced \cite{WHD}. These are graphs whose Laplacian matrices are diagonalized by a weak Hadamard matrix, which is a matrix $W$ with entries from the set $\{-1,0,1\}$ such that $W^{T}W$ is tridiagonal. Note that weak Hadamard matrices and WHD graphs generalize Hadamard matrices and HD graphs, respectively. Very recently, WHD graphs have gained interest from theoretical \cite{JP25,MDTL24} and applied perspectives \cite{WeakMatrices}. 

In \cite{MDTL24}, Macharete, Del-Vecchio, Teixeira and de Lima showed that a star and any threshold graph of the same order share the same eigenbasis. In this paper, we use this fact to establish two main results. First, we fully characterize threshold graphs that are simply structured, i.e., graphs whose Laplacian matrices have eigenbases consisting of vectors with entries in $\{-1,0,1\}$. This allows us to determine sufficient conditions for simply structured threshold graphs to be WHD. Since threshold graphs form a subclass of cographs, this provides a partial answer to an open question first raised in \cite{WHD}, which asks to characterize all cographs that are WHD. For our second result, we characterize Laplacian pair state transfer on threshold graphs, a type of quantum transport that represents accurate transmission of pairs of entangled qubit states. This contributes the literature on quantum walks on threshold graphs \cite{kirkland2011spin,kirkland2020fractional,monterde2026new}.

This paper is organized as follows. In Section \ref{sec:2}, we introduce basic notation, definitions, and preliminary results. In Section \ref{sec:3}, we characterize all threshold graphs that are simply structured. This allows us to list all connected threshold graphs on at most 20 vertices with simply structured eigenbases, see Table \ref{tab:tab1} in the Appendix. In Section \ref{sec:4} we provide sufficient conditions such that a simply structured threshold graph is WHD. In Section \ref{sec:5}, we give a complete characterization of Laplacian pair state transfer on threshold graphs. As a corollary, we show that vertex state transfer and pair state transfer on a threshold graph relative to the Laplacian are equivalent if and only if the threshold graph is not a join of a complete and empty graph of certain sizes. Finally, we discuss open questions in Section \ref{sec:6}.

\section{Preliminaries}\label{sec:2}

Throughout the paper, we let $G=(V,E)$ be a simple graph on $n$ vertices with vertex set $V=\{1, \ldots, n\}$ and edge set $E$. The degree of a vertex $i$ of $G$, denoted $d_{i}(G)$ or $d_{i}$ when the graph is clear from the context, is the number of edges incident to vertex $i$. The \textit{Laplacian matrix} of $G$ is given by
\begin{center}
$L(G) = D(G)-A(G)$,
\end{center}
where $A(G)$ is the (0,1)-adjacency matrix and $D(G)$ is the diagonal matrix of the vertex degrees of $G$. We let $\spec(G)$ denote the set of distinct Laplacian eigenvalues of $G$. The eigenvalues of $L(G)$ (including the repeated ones) are arranged in a nonincreasing order given by 
\begin{center}
$\mu_1 \geq  \cdots \geq \mu_{n-1} \geq \mu_{n} = 0.$
\end{center}
Let $G$ and $H$ be two graphs. The \textit{disjoint union} $G\sqcup H$ of $G$ and $H$ is the graph with vertex set $V(G)\sqcup V(H)$ and edge set $E(G)\sqcup E(H)$. The \textit{join} $G\vee H$ of $G$ and $H$ is the graph obtained from the disjoint union of $G$ and $H$ and adding all edges with one endpoint in $G$ and the other in $H$. We denote the complete graph, complete graph minus an edge and star on $n$ vertices by $K_n$, $K_n-e$ and $S_n$, respectively. We write the complement of $G$ by $G^c$. We also let $\mathbbm{1}_n$ denote the all-ones vector of order $n$. 

An \textit{eigenbasis} of $L(G)$ is a basis of $\mathbb{R}^n$ composed of eigenvectors of $L(G)$. A basis is \textit{simply structured} if it  consists of vectors with entries in $\{-1,0,1\}$. For an eigenvalue $\mu$ of $L(G)$, we denoted the eigenspace associated with $\mu$ by
\begin{center}
$\mathcal{E}_{L}(\mu) = \{\mathbf{x} \in \mathbb{R}^{n}, \; \mathbf{x}\ne 0: L(G)\mathbf{x} = \mu \mathbf{x} \}.$ 
\end{center}
We say that $\mathcal{E}_{L}(\mu)$ is \textit{simply structured} if it admits a simply structured basis, and a graph $G$ is \textit{simply structured} if $L(G)$ has a simply structured eigenbasis. Note that if $G$ is simply structured, then $G$ is necessarily\textit{Laplacian integral}, which is to say that $\spec(G)$ consists of all integers. In recent years, simply structured eigenspaces for certain eigenvalues of matrices are associated with graphs have been studied in \cite{AAGK06,AL21, ALN23,  SS08,SS07, SS09}. 

A matrix $W \in \mathbb{R}^{n \times n}$ is called \textit{weak Hadamard} if $W$ has all entries from the set $\{-1,0,1\}$ and $W^{T}W$ is a tridiagonal matrix \cite{WHD}. A graph $G$ is \textit{weakly Hadamard  diagonalizable} (or WHD for short) if the Laplacian matrix of $G$, denoted by $L(G)$, is diagonalizable by a weak Hadamard matrix $W$, that is, 
\begin{align*}
        L(G) = W\Lambda W^{-1},
    \end{align*}
where $\Lambda$ is the diagonal matrix of eigenvalues of $L(G).$ An analogous definition applies to \textit{Hadamard diagonalizable} graphs. Note that WHD and HD graphs are simply structured by definition. However, simply structured graphs need not be WHD nor HD (see examples in Appendix A). We also note that the definition of WHD may also be adapted to the adjacency matrix or signless Laplacian matrix of a graph. However, since these matrices are nonnegative, the Perron-Frobenius theorem forces WHD graphs relative to these matrices to have $\mathbbm{1}_n$ as an eigenvector. That is, such graphs are regular, and thus must also be WHD relative to the Laplacian matrix. Hence, the theory of WHD graphs under the Laplacian framework is less restrictive, and therefore more interesting.

In \cite{WHD}, the authors raised the problem of finding cographs that are WHD. Since any threshold graph is a cograph, we consider a related problem:

\vspace{0.2cm}

\begin{problem}\label{problem A}
    Determine all connected threshold graphs that are WHD.
\end{problem}

There are several ways to define a threshold graph, as discussed in \cite{MP95}. We utilize that which involves binary generating sequences.
\begin{definition}
\label{def:threshold}
    Let $(b_{i})$ be a sequence of $0's$ and $1's$ of length $n$. The threshold graph associated with the binary sequence $\mathbf{b}=(b_{i})$ is the graph on $n$ vertices constructed recursively as follows.
    \begin{itemize}
        \item[(i)] If $b_{i} = 0$, then add the isolated vertex $i$.
        \item[(ii)] Otherwise, add the vertex $i$ adjacent to all vertices with label less than $i$.
    \end{itemize}
\end{definition}
A vertex $i$ is an isolated vertex if $b_i=0$ and a dominating vertex if $b_i = 1.$ Clearly, $G$ is connected if and only if $b_n = 1$. We also follow the convention that $b_1=0$ for vertex 1. Consequently, we may write $\mathbf{b}=(b_{1}, b_{2}, \ldots, b_{n})$ as
\begin{center}
$\mathbf{0}^{s_1}\mathbf{1}^{t_1}\cdots \mathbf{0}^{s_r}\mathbf{1}^{t_r}$ 
\end{center}
for some positive integers $s_1,\ldots,s_r,t_1,\ldots,t_r$ satisfying $\sum_{j=1}^r(s_j+t_j)=n$. If $s_1=1$, then the connected threshold graph determined by $\mathbf{b}$ may be written as
\begin{equation}
\label{th1}
(((((K_{t_1'}\sqcup K^c_{s_2})\vee K_{t_2})\sqcup K^c_{s_3})\cdots)\sqcup K^c_{s_r})\vee K_{t_{r}}%:=\Gamma(m_1,\ldots,m_{2k+1})
\end{equation}
where $t_1'=t_1+1\geq 2$. However, if $s_1\geq 2$, then we may write it as
\begin{equation}
\label{th2}
(((((K_{s_1}^c\vee K_{t_1})\sqcup K_{s_2}^c)\vee K_{t_2})\cdots)\sqcup K_{s_r}^c)\vee K_{t_{r}}%:=\Gamma(m_1,\ldots,m_{2k})
\end{equation}
The expressions in (\ref{th1}) and (\ref{th2}) coincide with the forms of connected threshold graphs provided by  Kirkland and Severini in \cite{kirkland2011spin}.

The computation of the eigenvalues of the Laplacian matrix of threshold graphs in terms of vertex degrees is well-known \cite{Merris94}. More specifically, let $G$ be a threshold graph on $n$ vertices defined by $(b_{1}, b_{2}, \ldots, b_{n})$, with degree sequence $d = (d_{1} ,d_{2},\ldots, d_n)$, where $d_1\geq d_2\geq \ldots \geq d_n$, and $\text{tr}(G)=\displaystyle\sum_{i=1}^{n}b_i$. Then, the  eigenvalues of $L(G)$, denoted as $\mu_{i}$ for each $i \in \{1, 2, \ldots, n\}$, are given by:
    \begin{align}\label{spectrum  threshold}
    \mu_i = \left\{\begin{array}{rll}
        d_i +1,  & \text{if} \ \ 1 \leq i\leq \text{tr}(G),\\
        d_{i+1}, & \text{if} \ \ \text{tr}(G) + 1 \leq i \leq n-1,\\
        0,       & \text{if} \ \ i =n.
    \end{array}\right.
    \end{align}
We also remark that threshold graphs are determined by their Laplacian spectrum; that is, no two {threshold graphs} on $n$ vertices have the same spectrum {unless they are isomorphic}.

For the star $S_n$ on $n$ vertices, the vectors %each $\mathbf{x}^{l}$ 
\begin{align}
\label{base da estrela}
        \mathbf{x}^{l}_{i} = \left\{\begin{array}{rll}
        1,  & \text{ if } \ i< l+1,\\
        -l, & \text{ if } \ i = l+1,\\
        0,  & \text{ if } \ l+1 <i \leq n, 
    \end{array}\right.
    \end{align}
for each $ 1\leq l < n,$ and $\mathbf{x}^{n} = \mathbbm{1}_n$ are eigenvectors of $L(S_{n})$ 
associated with eigenvalues  $1$, $n$ and $0$ of multiplicities $n-2$, $1$ and $1$, respectively. In \cite{MDTL24}, the following result is established.

\begin{theorem}
 \label{TeoPrincipal} 
Let $G$ be a connected graph on $n$ vertices. Then $G$ is a threshold graph if and only if  $\{\mathbf{x}^{1}, \ldots, \mathbf{x}^{n}\}$ is an eigenbasis for $L(G)$. 
\end{theorem}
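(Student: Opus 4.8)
The plan is to prove both implications directly from the recursive definition of a threshold graph, with no induction. The conceptual linchpin, which I would establish first, is a closed form for the adjacency matrix: if $G$ is the threshold graph determined by $\mathbf{b}=(b_1,\dots,b_n)$, then $A(G)_{ij}=b_{\max\{i,j\}}$ for all $i\ne j$. Indeed, an edge $\{i,j\}$ with $i<j$ is inserted precisely at the step that adds vertex $j$, and only when $b_j=1$ (a dominating vertex), since an isolated vertex contributes no edges and later vertices only change edges incident to themselves. From this one reads off $d_k=(k-1)b_k+\sum_{j>k}b_j$. These two identities make every subsequent computation mechanical.

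For the forward implication, assume $G$ is the threshold graph of $\mathbf{b}$ and evaluate $L(G)\mathbf{x}^l$ coordinate by coordinate, splitting into the three ranges singled out by the support of $\mathbf{x}^l$. For $k>l+1$ (where $x^l_k=0$) the adjacency formula collapses the $k$-th coordinate of $L(G)\mathbf{x}^l$ to $l\,A_{k,l+1}-\sum_{j\le l}A_{kj}=l b_k-l b_k=0$. For $k=l+1$ it becomes $-l\big(d_{l+1}+b_{l+1}\big)$, and for $k\le l$ it becomes $(l+1)b_{l+1}+\sum_{j>l+1}b_j$, which the degree identity rewrites as exactly $d_{l+1}+b_{l+1}$. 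Hence $\mathbf{x}^l$ is an eigenvector of $L(G)$ with eigenvalue $\mu_l=d_{l+1}+b_{l+1}$ for each $1\le l\le n-1$, while $\mathbf{x}^n=\mathbbm{1}_n$ lies in the kernel of $L(G)$ for any graph. Since $\{\mathbf{x}^1,\dots,\mathbf{x}^n\}$ is a fixed linearly independent family — the $\mathbf{x}^l$ with $l\le n-2$ being triangular with respect to their supports and $\mathbf{x}^{n-1},\mathbf{x}^n$ lying in distinct eigenspaces of $L(S_n)$ — it is an eigenbasis of $L(G)$.

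For the converse, suppose $\{\mathbf{x}^1,\dots,\mathbf{x}^n\}$ is an eigenbasis of $L(G)$ with $G$ connected. The point is that the zero coordinates of each eigenvector $\mathbf{x}^l$ pin down $A(G)$. Fix $l\in\{1,\dots,n-2\}$ and $k>l+1$: since $\mathbf{x}^l$ is an eigenvector and $x^l_k=0$, the $k$-th coordinate of $L(G)\mathbf{x}^l$ vanishes, which unwinds, for an arbitrary graph, to $\sum_{j=1}^{l}A_{kj}=l\,A_{k,l+1}$. Because all entries lie in $\{0,1\}$, this equality forces $A_{k1}=\dots=A_{k,l+1}$. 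Specializing to $l=k-2$ for each $k\ge 3$ gives $A_{k1}=\dots=A_{k,k-1}=:b_k$; setting $b_2:=A_{21}$ and $b_1:=0$, we obtain $A(G)_{ij}=b_{\max\{i,j\}}$ for every $i\ne j$, so $G$ is exactly the threshold graph of $(b_1,\dots,b_n)$, with $b_n=1$ forced by connectivity.

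The computations involved are all routine; I expect the only points needing real care to be the first one — extracting $A(G)_{ij}=b_{\max\{i,j\}}$ faithfully from the recursive construction — and the case bookkeeping in the forward direction. Once the adjacency and degree identities are in hand, each implication is only a few lines.
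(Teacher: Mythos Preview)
Your argument is correct. Note, however, that the paper does not prove this theorem itself; it quotes the result from \cite{MDTL24}, so there is no in-paper proof to compare against. Your route---extracting the closed form $A(G)_{ij}=b_{\max\{i,j\}}$ and the degree identity $d_k=(k-1)b_k+\sum_{j>k}b_j$ directly from the recursive construction, and then reading off both implications coordinatewise---is clean and self-contained; the converse in particular is pleasantly short, since the single equation $\sum_{j=1}^{l}A_{kj}=l\,A_{k,l+1}$ with $\{0,1\}$ entries pins down the whole row below the diagonal.

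One cosmetic point: your linear-independence justification splits the family into $\mathbf{x}^1,\dots,\mathbf{x}^{n-2}$ (triangular supports) and $\mathbf{x}^{n-1},\mathbf{x}^n$ (distinct $L(S_n)$-eigenspaces), but as written this does not quite glue the two pieces together. It is simpler to note that the whole family is pairwise orthogonal: for $l<m\le n-1$ one has $\langle\mathbf{x}^l,\mathbf{x}^m\rangle=\sum_{i\le l}1+(-l)\cdot 1=0$, and each $\mathbf{x}^l$ with $l\le n-1$ has coordinate sum zero, hence is orthogonal to $\mathbf{x}^n=\mathbbm{1}_n$. Alternatively, the triangular-support argument already covers $\mathbf{x}^1,\dots,\mathbf{x}^{n-1}$ (the last nonzero entry of $\mathbf{x}^l$ sits in position $l+1$), after which $\mathbf{x}^n$ is the only vector not annihilated by $\mathbbm{1}_n^T$.
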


%%%%%%%%%%%%%%%%%%%%%%%%%%%%%%%%%%%%%%%%
%%%%%%%%%%%%%%%%%%%%%%%%%%%%%%%%%%%%%%%%
%%%%%%%%%%%%%%%%%%%%%%%%%%%%%%%%%%%%%%%%

\section{Characterization of simply structured threshold graphs}
\label{sec:3}

To solve Problem \ref{problem A}, we first characterize threshold graphs whose every eigenspace is simply structured. In this section, we establish a lower bound on the dimension of a  simply structured eigenspace for $L(G)$, and then derive a necessary and sufficient condition that provides a complete description of the threshold graphs that are simply structured.

\subsection{Dimension of eigenspaces}

 In this subsection, we address the following problem.% \ref{problema A1} as follows: 

\vspace{0.3cm}

\begin{problem}\label{problema A1}
    Determine the minimum number of vectors in a basis of an eigenspace of the Laplacian matrix of a connected threshold graph $G$ such that the basis is simply structured.
\end{problem}

\vspace{0.3cm}

From Theorem \ref{TeoPrincipal}, $L(G)$ and $L(S_n)$ share an eigenbasis $B=\{\mathbf{x}^{1},\ldots,\mathbf{x}^{n}\}$ for any threshold $G$ of order $n$. It is clear that the Laplacian eigenbasis of a graph is not unique, and many  other Laplacian eigenbasis $B^{\prime}$ can be obtained from $B$ through linear combinations.

Amongst all vectors in $B$, only $\mathbf{x}^{1}$ and $\mathbf{x}^{n}$ have all entries in $\{-1,0,1\}$. In particular, $0$ is a simple eigenvalue of $L(G)$. Hence, $\mathcal{E}_{L}(0)=\mathrm{span} \ \{ \mathbf{x}^{n}\}$, where $\mathbf{x}^{n} = \mathbbm{1}_n$, and so $\mathcal{E}_{L}(0)$ is simply structured. Now, since the largest eigenvalue of $L(G)$ is $n$, we address Problem \ref{problema A1} for this eigenvalue. 

\begin{proposition}\label{prop:n}
    Let $G$ be a connected threshold graph on $n$ vertices with binary sequence $\mathbf{b}=(b_1, \ldots, b_{l_1}, b_{l_1+1}, \ldots, b_{n-1}, b_n)$, and let $l_1 \in \{1,\ldots,n-1\}$ be such that $b_{l_1+1} = \cdots = b_{n}$ and  $b_{l_1}\neq b_{l_1+1}$. Then, $\mathcal{E}_L(n)$ is simply structured if and only if $1\leq l_1 \leq \lfloor \frac{n}{2} \rfloor.$
\end{proposition}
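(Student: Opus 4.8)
The plan is to work directly with the shared eigenbasis $B=\{\mathbf{x}^1,\ldots,\mathbf{x}^n\}$ from Theorem \ref{TeoPrincipal}, and with the degree-based formula \eqref{spectrum threshold} for the Laplacian spectrum. First I would identify exactly which vectors of $B$ span $\mathcal{E}_L(n)$. Since $n$ is the largest Laplacian eigenvalue and $b_{l_1+1}=\cdots=b_n=1$ (connectedness forces $b_n=1$, so the trailing run is a run of $1$'s), the vertices $l_1+1,\ldots,n$ are dominating vertices of degree $n-1$, contributing eigenvalue $n$ via the "$d_i+1$" branch of \eqref{spectrum threshold}. One checks that $\mathcal{E}_L(n)$ has dimension exactly $n-l_1$ (equivalently, $\mathrm{tr}(G)-$ number of dominating vertices beyond a certain index), and that the eigenvectors $\mathbf{x}^{l_1},\mathbf{x}^{l_1+1},\ldots,\mathbf{x}^{n-1}$ together with $\mathbf{x}^n=\mathbbm{1}_n$ — or the appropriate subfamily — form a basis of $\mathcal{E}_L(n)$, using the explicit form \eqref{base da estrela}. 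I should double-check whether $\mathbf{x}^{l_1}$ itself lies in $\mathcal{E}_L(n)$ or just spans it together with the later vectors; the hypothesis $b_{l_1}\neq b_{l_1+1}$, i.e. $b_{l_1}=0$, is precisely what pins down the boundary.

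**The two directions.** For the ``if'' direction, assuming $l_1\le\lfloor n/2\rfloor$, I would construct an explicit simply structured basis of $\mathcal{E}_L(n)$. The natural candidates are difference vectors $e_i-e_j$ supported on the dominating block $\{l_1+1,\ldots,n\}$, which automatically have entries in $\{-1,0,1\}$ and, being orthogonal to $\mathbbm{1}$ on that block, lie in the $n$-eigenspace; I also need one more vector tying the dominating block to the rest, and here the constraint $l_1\le\lfloor n/2\rfloor$ ensures there are enough ($n-l_1\ge l_1$) dominating vertices to "absorb" a $\pm1$ pattern on the $l_1$ non-dominating coordinates while keeping all entries in $\{-1,0,1\}$ — e.g. a vector with $+1$ on the first $l_1$ coordinates and $-1$ on $l_1$ of the dominating coordinates. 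Counting dimensions confirms this set spans $\mathcal{E}_L(n)$. For the ``only if'' direction, I would argue contrapositively: if $l_1>\lfloor n/2\rfloor$, then any eigenvector for $n$ must be orthogonal to $\mathbbm{1}_n$ and, crucially, must have a prescribed shape forced by $L(G)x=nx$ — namely that coordinates outside the dominating block are heavily constrained. I expect one shows that $\mathcal{E}_L(n)$ is spanned by $\mathbbm{1}_n$-complemented differences on the dominating block \emph{plus} vectors whose restriction to the first $l_1$ coordinates cannot simultaneously be $\{-1,0,1\}$-valued and orthogonality-balanced against only $n-l_1<l_1$ dominating coordinates; a counting/pigeonhole argument on supports then rules out a simply structured basis.

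**Main obstacle.** The hard part will be the ``only if'' direction: showing that \emph{no} simply structured basis exists when $l_1>\lfloor n/2\rfloor$, not merely that the obvious one fails. The clean way is probably to pin down $\mathcal{E}_L(n)$ as $\{x : x_{l_1+1}=\cdots=x_n \text{ forced?}\}$ — more precisely, to determine the eigenvalue equation $L(G)x=nx$ explicitly on each vertex class (isolated vertices, dominating vertices, and intermediate classes of the threshold graph), deduce that on the dominating block $x$ may be arbitrary subject to a single linear relation, while on each non-dominating vertex $x_i$ is a fixed rational combination (with denominator related to a degree) of the dominating-block sum — and then observe that a $\{-1,0,1\}$ vector forces that combination to vanish, collapsing the support into the dominating block, which has size $n-l_1<l_1\le n/2$; but a simply structured basis of an $(n-l_1)$-dimensional space all of whose vectors are supported in an $(n-l_1)$-set, orthogonal to $\mathbbm{1}$ there, has rank at most $n-l_1-1<\dim\mathcal{E}_L(n)$, a contradiction. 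Making the "fixed rational combination with nontrivial denominator" step rigorous — i.e. verifying the relevant degree is never $1$ so the denominator genuinely obstructs integrality — is the delicate point, and is where I'd spend the most care, likely splitting into the cases $s_1=1$ and $s_1\ge 2$ from \eqref{th1}–\eqref{th2}.
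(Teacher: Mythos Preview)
Your ``if'' direction matches the paper: the vectors $e_i-e_{i+1}$ for $l_1+1\le i\le n-1$ together with $\mathbf{v}'=\sum_{i=1}^{l_1}e_i-\sum_{i=l_1+1}^{2l_1}e_i$ form the desired simply structured basis when $l_1\le\lfloor n/2\rfloor$.

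Your ``only if'' direction has the right target but the mechanism you plan to execute is wrong, and following your ``Main obstacle'' paragraph literally would not close the argument. The structural fact you need is that every $x\in\mathcal{E}_L(n)$ has $x_1=x_2=\cdots=x_{l_1}$ (not your tentative $x_{l_1+1}=\cdots=x_n$). This drops out instantly from the eigenbasis of Theorem~\ref{TeoPrincipal}, with no need to unwind $L(G)x=nx$ vertex by vertex and no case split on $s_1$: since $\mathcal{E}_L(n)=\operatorname{span}\{\mathbf{x}^{l_1},\ldots,\mathbf{x}^{n-1}\}$ and each $\mathbf{x}^j$ with $j\ge l_1$ has its first $l_1$ entries equal to $1$, every linear combination has constant first $l_1$ entries. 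Once $x_1=\cdots=x_{l_1}=c$, the obstruction when $l_1>\lfloor n/2\rfloor$ is \emph{not} an integrality issue. Orthogonality to $\mathbbm{1}_n$ gives $\sum_{i>l_1}x_i=-l_1c$, and for a $\{-1,0,1\}$-vector $|l_1c|=\big|\sum_{i>l_1}x_i\big|\le n-l_1<l_1$, forcing $|c|<1$ and hence $c=0$. Your ``denominator related to a degree obstructs integrality'' line, and the worry about whether some degree equals $1$, are red herrings: the relevant coefficient is $l_1$, not $n-d_i$, and divisibility alone never rules out $c=\pm 1$ --- only the size inequality $n-l_1<l_1$ does. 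After $c=0$ is forced for every $\{-1,0,1\}$-eigenvector, your rank-deficiency finish (support collapses to an $(n-l_1)$-set with zero coordinate sum, hence rank at most $n-l_1-1<\dim\mathcal{E}_L(n)$) is correct.
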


\begin{proof}
Let $l_1 \in \{1,\ldots,n-1\}$ such that $b_{l_1+1}=\cdots=b_n$ and $b_{l_1}\neq b_{l_1+1}$. Since $G$ is a connected threshold graph, $b_n=1$, and so $b_{l_1+1}=\cdots=b_n=1$ and $b_{l_1}=0$. Under these conditions, we have $\text{tr}(G)\geq n-l_1$ and from \eqref{spectrum  threshold}, it follows that $n$ is an eigenvalue of $L(G)$ with multiplicity $n-l_1$. Thus,
\begin{align*}
    \mathcal{E}_L(n)&=\mathrm{span} \ \{ \mathbf{x}^{l_1}, \ldots, \mathbf{x}^{n-1} \} \quad  \textrm{and} \quad \mathrm{dim}(\mathcal{E}_{L}(n)) = n-l_1.
\end{align*} 
Suppose that $\mathcal{E}_L(n)$ is simply structured. Then, by hypothesis, there is a basis $B=\{\mathbf{v}^{l_1+1}, \ldots, \mathbf{v}^{n-1}, \mathbf{v}^{\prime}\}$ such that 
$$\mathcal{E}_L(n)=\mathrm{span} \ \{ \mathbf{x}^{l_1}, \ldots, \mathbf{x}^{n-1} \} = \mathrm{span}\{\mathbf{v}^{l_1+1}, \ldots, \mathbf{v}^{n-1},\mathbf{v}^{\prime}\},$$
where each vector in $B$ has entries only in $\{-1,0,1\}$.
Now, let $\mathbf{v}=(v_{1}, \ldots, v_{n}) \in B$. Note that writing $\mathbf{v}$ as a linear combination of the $\mathbf{x}^{i}$'s is equivalent to finding a vector $a=(a_{l_1}, a_{l_1+1}, \ldots, a_{n-1})^{T}$ of coefficients such that 
\begin{equation}\label{eq:sistemalinear(n)}
    X a = \mathbf{v},
\end{equation}
where 
\begin{align*}
    X=[\mathbf{x}^{l_1}\cdots\mathbf{x}^{n-1}]=\left[\begin{array}{ccccc}
        1          & 1          & \ldots     & 1          & 1 \\ 
        \vdots     & \vdots     & \ddots     & \vdots     & \vdots \\ 
        1          & 1          & \ldots     & 1          & 1 \\
        -l_1         & 1          & \ldots     & 1          & 1 \\ 
        0          & -(l_1+1)     & \ldots     & 1          & 1 \\ 
        0          & 0          & \ldots     & 1          & 1 \\
        \vdots     & \vdots     & \ddots     & \vdots     & \vdots \\ 
        0          & 0          & \ldots     & 1          & 1 \\ 
        0          & 0          & \ldots     & -(n-2)     & 1 \\ 
        0          & 0          & \ldots     & 0          & -(n-1) 
    \end{array}\right].
\end{align*}
 
Note that $\mathbf{v}$ must have the same number of $-1$'s and $1$'s since it is orthogonal to $ \mathbbm{1}_n$. Since the first $l_1$ equations of \eqref{eq:sistemalinear(n)}  are all identical, we get $v_{1}=v_{2}=\cdots=v_{l_1}.$ The other $n-l_1$ equations are given by
\begin{equation}  \label{somas(n)}
\left\{\begin{array}{rrrrrrrrrrrr}
  -l_1 \, a_{l_1} + \hspace{1cm}  a_{l_1+1} + \hspace{0.2cm} \ldots \hspace{0.2cm} + \hspace{1cm} a_{n-2} + \hspace{1cm} a_{n-1} = & v_{l_1+1}   \\
                  -(l_1+1) \, a_{l_1+1} + \hspace{0.2cm} \ldots \hspace{0.2cm} + \hspace{1cm} a_{n-2} + \hspace{1cm} a_{n-1} = & v_{l_1+2}   \\
                                    &             \vdots                            \\
                                         -(n-2) \, a_{n-2}  + \hspace{1cm} a_{n-1}   = & v_{n-1}   \\
                                                       -(n-1) \, a_{n-1}  = & v_{n}
\end{array}\right.
\end{equation}

Let us analyze the cases where $v_1 = \cdots =v_{l_1} = 0$ or $v_1 = \cdots = v_{l_1} = 1.$ Note that the case $v_1 = \cdots = v_{l_1} = -1$  is analogous to the latter case, except for a change of sign.

Let us first deal with the case $v_1 = \cdots =v_{l_1} = 0.$ Since the number of $1$'s and $-1$'s are equal for each $i \in \{l_1+1,\ldots,n-1\}$, take $v_{i} = 1, v_{i+1} = -1$ and $v_{j} = 0$ for all $j \ne \{i, i+1\}.$ The solution to  \eqref{eq:sistemalinear(n)} is unique and is given by
\begin{align*}
    a_{i}=\frac{1}{i}, \ a_{i+1} = -\frac{1}{i} \ \text{and} \ a_{j}=0, \ \text{for all} \  j \in \{l_1+1, \ldots, n-1\} \setminus \{i, i+1\}.  
\end{align*}
This shows that it is possible to combine the vectors $\mathbf{x}^{l_1+1},\ldots,\mathbf{x}^{n-1}$ to obtain $n-l_1-1$ eigenvectors of $L(G)$ that are linearly independent with entries in $\{-1,0,1\}$. These vectors are
\begin{align}\label{vi estruturado(n)}
    \mathbf{v}^{i} = \frac{1}{i}\mathbf{x}^i-\frac{1}{i}\mathbf{x}^{i-1} = e_i-e_{i+1}
\end{align}
for each $i \in \{l_1+1,\ldots,n-1\}.$
Furthermore, in the other cases where there are two or more $1$'s and $-1$'s, the vectors obtained are a linear combination of the vectors $\mathbf{v}^{i}$ obtained in \eqref{vi estruturado(n)}.

We now deal with the case $v_{1} = \cdots = v_{l_1} =1.$ In this case, we must have at least $l_1$ entries equal to $-1$ among $v_{l_1+1},\ldots,v_{n}$ in the $n-l_1$  equations in \eqref{somas(n)}. Hence, we must have  $n-l_1 \geq l_1,$ which implies that $1\leq l_1 \leq \lfloor \frac{n}{2} \rfloor.$ Suppose that $v_{l_1+1} = \cdots = v_{2l_1} =-1$ and let  $\mathbf{v}^{\prime}=\sum_{i=1}^{l_1} e_i-\sum_{i=l_1+1}^{2l_1} e_{i}$. The solution to the system \eqref{eq:sistemalinear(n)} is unique and is given by
\begin{align*}
    a_i=\frac{2l_1}{i(i+1)}, \ \text{for} \ \  i=l_1, \ldots, 2l_1-1 \ \text{and} \ a_i=0, \ \text{for} \ i=2l_1, \ldots, n-1.
\end{align*}
Thus, it is possible to combine the vectors  $\mathbf{x}^{l_1},\ldots,\mathbf{x}^{2l_1-1}$ to obtain an eigenvector of $L(G)$ with entries in $\{-1,0,1\}$ given by
\begin{align}\label{v' estruturado(n)}
    \mathbf{v}^{\prime}=\frac{2l_1}{l_1(l_1+1)} \, \mathbf{x}^{l_1} + \ldots + \frac{2l_1}{(2l_1-2)(2l_1-1)} \, \mathbf{x}^{2l_1-2} + \frac{1}{2l_1-1} \, \mathbf{x}^{2l_1-1}
\end{align}
Note that $\mathbf{v}^{\prime}$ in \eqref{v' estruturado(n)} is not a linear combination of the vectors $\mathbf{v}^{l_1+1}, \ldots, \mathbf{v}^{n-1}$ in \eqref{vi estruturado(n)}, since the first $l_1$ coordinates of $\mathbf{v}^{\prime}$ are equal to 1, while the first $l_1$ coordinates of  $\mathbf{v}^{l_1+1}, \ldots, \mathbf{v}^{n-1}$ are equal to 0. Thus, $\{\mathbf{v}^{l_1+1},\ldots,\mathbf{v}^{n-1}, \mathbf{v}^{\prime}\}$ is a linearly independent set. Hence, if $\mathcal{E}_L(n)$ is simply structured, then $1\leq l_1 \leq \lfloor \frac{n}{2} \rfloor.$

Conversely, suppose $1\leq l_1 \leq \lfloor \frac{n}{2} \rfloor$ with $b_{l_1+1}=\cdots=b_n=1$ and $b_{l_1}=0$. Then, 
\begin{center}
$\mathcal{E}_L(n)=\mathrm{span} \ \{ \mathbf{x}^{l_1}, \ldots, \mathbf{x}^{n-1} \} \quad  \textrm{and} \quad \left\lceil \frac{n}{2} \right\rceil \leq \mathrm{dim}(\mathcal{E}_{L}(n)) \leq n-1.$
\end{center} 
Since $l_1 \in \{1,\ldots, \lfloor \frac{n}{2} \rfloor\}$, it is possible to combine the vectors  $\mathbf{x}^{l_1}, \ldots, \mathbf{x}^{n-1}$ that are in the basis of $\mathcal{E}_L(n)$  to obtain a new basis for $\mathcal{E}_L(n)$ formed by the vectors $\mathbf{v}^{l_1+1}, \ldots, \mathbf{v}^{n-1},\mathbf{v}^{\prime}$ with entries only in $\{-1,0,1\}$, where the vectors $\mathbf{v}^{i}$, para $i=l_1+1, \ldots, n-1$ are given in \eqref{vi estruturado(n)} and the vector  $\mathbf{v}^{\prime}$ is given in \eqref{v' estruturado(n)}. Therefore, if $1\leq l_1 \leq \lfloor \frac{n}{2} \rfloor$, then  $\mathcal{E}_L(n)$ is simply structured.
\end{proof} 

From Proposition \ref{prop:n}, we conclude that the minimum number of vectors in the basis of $\mathcal{E}_{L}(n)$ is $\left\lceil\frac{n}{2}\right\rceil$. This result provides the answer to Problem \ref{problema A1} for $\mathcal{E}_{L}(n).$ Assuming that $G$ is a connected threshold graph with binary sequence $\mathbf{b}=(b_1, \ldots, b_{l_1}, b_{l_1+1}, \ldots, b_{n-1}, b_n)$ such that $b_{l_1+1}=\cdots=b_{n}=1$ and $b_{l_1}=0$, for some $l_1\in \{2,\ldots, n-1\}$, we always obtain $n-l_1$ as an eigenvalue of $L(G).$ For this reason, we will now focus on addressing Problem \ref{problema A1} for the eigenspace associated with the eigenvalue $n-l_1$. 

\begin{proposition}\label{prop:n-k}
   Let $G$ be a connected threshold graph on $n$ vertices with binary sequence $\mathbf{b}=(b_1, \ldots, b_{l_2}, b_{l_2+1}, \ldots, b_{l_1-1}, b_{l_1}, b_{l_1+1}, \ldots, b_{n-1}, b_{n})$ such that $b_{l_1}=0$ and $b_{l_1+1}=\cdots=b_{n}=1$, for some $l_1 \in \{1,\ldots, n-1\}$. Let $l_2\in \{2, \ldots, l_1-1\}$ such that 
   $b_{l_2+1}=\cdots=b_{l_1}$ and $b_{l_2}\neq b_{l_2+1}$. Then $\mathcal{E}_{L}(n-l_1)$ is simply structured if and only if $2\leq l_2 \leq \lfloor \frac{l_1}{2} \rfloor$.
\end{proposition}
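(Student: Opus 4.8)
The plan is to transcribe the proof of Proposition~\ref{prop:n} almost verbatim, with the roles of $n$ and $l_1$ there now played by $l_1$ and $l_2$. First I would pin down the eigenspace $\mathcal{E}_L(n-l_1)$. From $b_{l_2}\neq b_{l_2+1}=\cdots=b_{l_1}=0$ we get $b_{l_2}=1$, so vertex $l_2$ is dominating; a short degree count then shows that vertex $j$ has degree $n-l_1$ exactly for $j\in\{l_2+1,\dots,l_1\}$, whereas every other vertex has degree at least $n-l_1+1$ (each $j\le l_2-1$ is adjacent to vertex $l_2$ and to all of $\{l_1+1,\dots,n\}$, and each $j\ge l_1+1$ is a dominating vertex of degree $n-1$). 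Together with \eqref{spectrum  threshold} this makes $n-l_1$ a Laplacian eigenvalue of multiplicity exactly $l_1-l_2$. On the other hand, whenever $b_{l+1}=0$ the vertex $l+1$ has no neighbour among $\{1,\dots,l\}$, so inspecting the coordinate in position $l+1$ of $L(G)\mathbf{x}^{l}$ shows that the eigenvector $\mathbf{x}^{l}$ of \eqref{base da estrela} lies in the eigenspace of $d_{l+1}$; in particular $\mathbf{x}^{l_2},\dots,\mathbf{x}^{l_1-1}$ all belong to $\mathcal{E}_L(n-l_1)$. With Theorem~\ref{TeoPrincipal} this yields
\[
\mathcal{E}_L(n-l_1)=\mathrm{span}\{\mathbf{x}^{l_2},\dots,\mathbf{x}^{l_1-1}\},\qquad \dim\mathcal{E}_L(n-l_1)=l_1-l_2,
\]
and every vector of this space vanishes outside its first $l_1$ coordinates.

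For the forward implication I would argue as follows. Every $\mathbf{v}\in\mathcal{E}_L(n-l_1)$ satisfies $v_1=\cdots=v_{l_2}$ (the first $l_2$ rows of $X=[\mathbf{x}^{l_2}\cdots\mathbf{x}^{l_1-1}]$ coincide) and $\sum_i v_i=0$ (since $\mathbf{v}\perp\mathbbm{1}_n$). The vectors of $\mathcal{E}_L(n-l_1)$ with $v_1=0$ form a subspace of dimension $l_1-l_2-1$ (spanned by $\mathbf{v}^{i}=e_i-e_{i+1}$, $i=l_2+1,\dots,l_1-1$, exactly as in \eqref{vi estruturado(n)}), so a simply structured basis must contain a vector $\mathbf{v}'$ with $v_1=\cdots=v_{l_2}=\pm1$; after a sign change I may assume $+1$. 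Then $\mathbf{v}'$ has $l_2$ coordinates equal to $1$, vanishes outside $\{1,\dots,l_1\}$, and its coordinates $v_{l_2+1},\dots,v_{l_1}\in\{-1,0,1\}$ sum to $-l_2$; hence at least $l_2$ of these $l_1-l_2$ coordinates equal $-1$, which forces $l_1-l_2\ge l_2$, i.e.\ $l_2\le\lfloor l_1/2\rfloor$.

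For the converse I would assume $2\le l_2\le\lfloor l_1/2\rfloor$, so $2l_2\le l_1$, and exhibit a basis directly. By \eqref{vi estruturado(n)}, $\mathbf{v}^{i}=\tfrac1i\mathbf{x}^{i}-\tfrac1i\mathbf{x}^{i-1}=e_i-e_{i+1}$ lies in $\mathcal{E}_L(n-l_1)$ for $i=l_2+1,\dots,l_1-1$; and since $2l_2-1\le l_1-1$, the computation behind \eqref{v' estruturado(n)} (with $l_1$ replaced by $l_2$) expresses $\mathbf{v}'=\sum_{i=1}^{l_2}e_i-\sum_{i=l_2+1}^{2l_2}e_i$ as $\sum_{i=l_2}^{2l_2-1}\tfrac{2l_2}{i(i+1)}\mathbf{x}^{i}$, so $\mathbf{v}'\in\mathcal{E}_L(n-l_1)$ as well. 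These $l_1-l_2$ vectors have all entries in $\{-1,0,1\}$, they are linearly independent because the first $l_2$ coordinates of $\mathbf{v}'$ equal $1$ while those of every $\mathbf{v}^{i}$ equal $0$, and $l_1-l_2=\dim\mathcal{E}_L(n-l_1)$; hence they form a simply structured basis.

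I expect the only real obstacle to be the opening step — determining $\mathcal{E}_L(n-l_1)$ and, in particular, its exact dimension $l_1-l_2$. This is where the hypothesis $b_{l_2}=1$ is essential: it guarantees that every vertex with label $\le l_2-1$ is still adjacent to vertex $l_2$, so that neither such a vertex nor any dominating vertex can attain the eigenvalue $n-l_1$. Everything after that is a routine adaptation of the proof of Proposition~\ref{prop:n}.
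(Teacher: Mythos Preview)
Your proposal is correct and follows essentially the same approach as the paper's own proof: identify $\mathcal{E}_L(n-l_1)=\mathrm{span}\{\mathbf{x}^{l_2},\dots,\mathbf{x}^{l_1-1}\}$, note that every vector therein has $v_1=\cdots=v_{l_2}$ and $v_{l_1+1}=\cdots=v_n=0$, and then use a counting argument on the $\pm1$ entries to force $l_1-l_2\ge l_2$, with the converse furnished by the explicit basis $\{\mathbf{v}^{l_2+1},\dots,\mathbf{v}^{l_1-1},\mathbf{v}'\}$. Your forward direction is in fact slightly tidier than the paper's, since you explicitly observe that the $v_1=0$ vectors form a codimension-one subspace and hence any basis must contain a vector with $v_1=\pm1$; the paper handles the two cases in parallel without isolating this step.
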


\begin{proof}
By Theorem \ref{TeoPrincipal}, $L(G)$ and $L(S_n)$ share the same eigenvectors. By hypothesis,  $b_{l_1}=0$ and $b_{l_1+1}=\cdots=b_{n}=1$, for some $l_1 \in \{1, \ldots, n-1\}$. Let $l_2 \in \{2, \ldots,l_1-1\}$ be such that $b_{l_2+1}=\cdots=b_{l_1}=0$ and $b_{l_2}=1$. Under these conditions, we have $\text{tr}(G)\geq n-l_1$ and from \eqref{spectrum  threshold}, it follows that $n-l_1$ is an eigenvalue of $L(G)$ with multiplicity $l_1-l_2$. By Theorem \ref{TeoPrincipal}, we have that  
\begin{align*}
    \mathcal{E}_L(n-l_1)&=\mathrm{span} \ \{ \mathbf{x}^{l_2}, \ldots, \mathbf{x}^{l_1-1} \} \quad  \textrm{and} \quad \mathrm{dim}(\mathcal{E}_{L}(n-l_1)) = l_1-l_2.
\end{align*} 
Suppose that $\mathcal{E}_L(n-l_1)$ is simply structured. Then, by hypothesis, there is a basis $B=\{\mathbf{u}^{l_2+1}, \ldots, \mathbf{u}^{l_1-1}, \mathbf{u}^{\prime}\}$ such that 
$$\mathcal{E}_L(n-l_1)=\mathrm{span} \ \{ \mathbf{x}^{l_2}, \ldots, \mathbf{x}^{l_1-1} \} = \mathrm{span}\{\mathbf{u}^{l_2+1}, \ldots, \mathbf{u}^{l_1-1},\mathbf{u}^{\prime}\},$$
where each vector in $B$ has entries only in the set $\{-1,0,1\}$.
Take $\mathbf{u}=(u_{1}, \ldots, u_{n}) \in B$ and determining it from the vectors $\mathbf{x}^{i}$ corresponds to finding coefficients $a=(a_{l_2}, a_{l_2+1}, \ldots, a_{l_1-1})^{T}$ such that 
\begin{equation}\label{eq:sistemalinear(n-k)}
    X a = \mathbf{u},
\end{equation}
where 
\begin{align*}
    X=[\mathbf{x}^{l_2}\cdots\mathbf{x}^{l_1-1}]=\left[\begin{array}{ccccc}
        1          & 1            & \ldots      & 1           & 1 \\ 
        \vdots     & \vdots       & \ddots      & \vdots      & \vdots \\ 
        1          & 1            & \ldots      & 1           & 1 \\
        -l_2         & 1            & \ldots      & 1           & 1 \\ 
        0          & -(l_2+1)       & \ldots      & 1           & 1 \\ 
        0          & 0            & \ldots      & 1           & 1 \\
        \vdots     & \vdots       & \ddots      & \vdots      & \vdots \\ 
        0          & 0            & \ldots      & 1           & 1 \\ 
        0          & 0            & \ldots      & -(l_1-2)    & 1 \\ 
        0          & 0            & \ldots      & 0           & -(l_1-1) \\
        0          & 0            & \ldots      & 0           & 0 \\ 
        \vdots     & \vdots       & \ddots      & \vdots      & \vdots \\ 
        0          & 0            & \ldots      & 0           & 0  
    \end{array}\right]. 
\end{align*}

Note that  $\mathbf{u}$ must have same number of $-1'$s and $1'$s since it is orthogonal to $\mathbbm{1}_n$. 
Observe that the first $l_2$ equations of \eqref{eq:sistemalinear(n-k)} are all identical, and therefore $u_{1}=u_{2}=\cdots=u_{l_2}.$ The other $l_1-l_2$ equations are given by 
\begin{equation}  \label{somas(n-k)}
\left\{\begin{array}{rrrrrrrrrrrr}
  -l_2 \, a_{l_2} + \hspace{1cm}  a_{l_2+1} + \hspace{0.2cm} \ldots \hspace{0.2cm} + \hspace{1cm} a_{l_1-2} + \hspace{1cm} a_{l_1-1} = & u_{l_2+1}   \\
                  -(l_2+1) \, a_{l_2+1} + \hspace{0.2cm} \ldots \hspace{0.2cm} + \hspace{1cm} a_{l_1-2} + \hspace{1cm} a_{l_1-1} = & u_{l_2+2}   \\
                                                                                       & \vdots   \\
                                         -(l_1-2) \, a_{l_1-2}  + \hspace{1cm} a_{l_1-1}   = & u_{l_1-1}   \\
                                                       -(l_1-1) \, a_{l_1-1}  = & u_{l_1}
\end{array}\right.
\end{equation}
and the remaining $n-l_1$ equations from  \eqref{eq:sistemalinear(n-k)} are all equal to 0, and therefore $u_{l_1+1}=\cdots=u_{n}=0$. 

Let us  analyze the cases where $u_1 = \cdots =u_{l_2} = 0$ or $u_1 = \cdots = u_{l_2} = 1.$ Note that the case where $u_1 = \cdots = u_{l_2} = -1$ is analogous to the previous case, except for the change of sign. 

First, suppose $u_1 = \cdots =u_{l_2} = 0.$ Since the number of $1$'s and $-1$'s are equal, for each $i \in \{l_2+1,\ldots,l_1-1\}$, take $u_{i} = 1, u_{i+1} = -1$ and $u_{j} = 0$ for all $j \ne \{i, i+1\}.$ The solution to the system \eqref{eq:sistemalinear(n-k)} is unique and given by 
\begin{align*}
    a_{i}=\frac{1}{i}, \ a_{i+1} = -\frac{1}{i} \ \text{and} \ a_{j}=0, \ \text{for} \  j \in \{l_2+1, \ldots, l_1-1\} \setminus \{i, i+1\}.    
\end{align*}
This shows that is possible to combine the vectors  $\mathbf{x}^{l_2+1},\ldots,\mathbf{x}^{l_1-1}$ to obtain $l_1-l_2-1$ eigenvectors of $L(G)$ that are linearly independent with entries in $\{-1,0,1\}$. These vectors are 
\begin{align}\label{ui estruturado(n)}
    \mathbf{u}^{i} = \frac{1}{i}\mathbf{x}^i-\frac{1}{i}\mathbf{x}^{i-1} = e_i-e_{i+1}
\end{align}
for each $i \in \{l_2+1,\ldots,l_1-1\}.$
Furthermore, in the other cases where are two or more $1'$s and $-1'$s, the vectors obtained are a linear combination of the vectors $\mathbf{u}^{i}$ obtained in \eqref{ui estruturado(n)}.

Now, suppose that $u_{1} = \cdots = u_{l_2} =1.$ In this case, we must have at least $l_2$ entries equal to $-1$ among $u_{l_2+1},\ldots,u_{l_1}$ in the $l_1-l_2$ equations in \eqref{somas(n-k)}. Hence, we have must $l_1 -l_2 \geq l_2,$ which implies that $2\leq l_2 \leq \lfloor \frac{l_1}{2} \rfloor.$ Suppose that $u_{l_2+1} = \cdots = u_{2l_2} =-1$ and let $\mathbf{u}^{\prime}=\sum_{i=1}^{l_2} e_i-\sum_{i=l_2+1}^{2l_2} e_{i}$. The solution to the system \eqref{eq:sistemalinear(n-k)} is unique and is given by 
\begin{align*}
    a_i=\frac{2l_2}{i(i+1)}, \ \text{for} \ \  i=l_2, \ldots, 2l_2-1 \ \text{and} \ a_i=0, \ \text{for} \ i=2l_2, \ldots, l_1-1.
\end{align*}
Thus, it is possible to combine the vectors $\mathbf{x}^{l_2},\ldots,\mathbf{x}^{2l_2-1}$ to obtain $1$ eigenvector of $L(G)$ with entries in $\{-1,0,1\}$ given by
\begin{align}\label{u' estruturado(n)}
    \mathbf{u}^{\prime}=\frac{2l_2}{l_2(l_2+1)} \, \mathbf{x}^{l_2} + \cdots + \frac{2l_2}{(2l_2-2)(2l_2-1)} \, \mathbf{x}^{2l_2-2} + \frac{1}{2l_2-1} \, \mathbf{x}^{2l_2-1}
\end{align}
Note that $\mathbf{u}^{\prime}$ in \eqref{u' estruturado(n)} is not a linear combination of the vectors  $\mathbf{u}^{l_2+1}, \ldots, \mathbf{u}^{l_1-1}$ in \eqref{ui estruturado(n)}, since the first $l_2$ coordinates of $\mathbf{u}^{\prime}$ are equal to 1, while the first $l_2$ coordinates of $\mathbf{u}^{l_2+1}, \ldots, \mathbf{u}^{l_1-1}$ are equal to 0. Therefore, the vectors in the set $\{\mathbf{u}^{l_2+1},\ldots,\mathbf{u}^{l_1-1}, \mathbf{u}^{\prime}\}$ are linearly independent. Thus, if $\mathcal{E}_L(n-l_1)$ is simply structured, then $2\leq l_2 \leq \lfloor \frac{l_1}{2} \rfloor.$

On the other hand, suppose that $2\leq l_2 \leq \lfloor \frac{l_1}{2} \rfloor$ and that $b_{l_2+1}=\cdots=b_{l_1}=0$ and $b_{l_2}=1$. So,
\begin{center}
$\mathcal{E}_L(n-l_1)=\mathrm{span} \ \{ \mathbf{x}^{l_2}, \ldots, \mathbf{x}^{l_1-1} \} \quad  \textrm{and} \quad \left\lceil \frac{l_1}{2} \right\rceil \leq \mathrm{dim}(\mathcal{E}_{L}(n-l_1)) \leq l_1-2.$
\end{center} 
Since $l_2 \in \{2,\ldots, \lfloor \frac{l_1}{2} \rfloor\}$, it is possible to combine the vectors $\mathbf{x}^{l_2}, \ldots, \mathbf{x}^{l_1-1}$ that are in the basis of $\mathcal{E}_L(n-l_1)$ to obtain a new basis for $\mathcal{E}_L(n-l_1)$ formed by the vectors $\mathbf{u}^{l_2+1}, \ldots, \mathbf{u}^{l_1-1},\mathbf{u}^{\prime}$ with entries only in $\{-1,0,1\}$, where the vectors $\mathbf{u}^{i}$, for $i=l_2+1, \ldots, l_1-1$ are given in \eqref{ui estruturado(n)} and the vector $\mathbf{u}^{\prime}$ is given in \eqref{u' estruturado(n)}. Therefore, if $2\leq l_2 \leq \lfloor \frac{l_1}{2} \rfloor$, then $\mathcal{E}_L(n-l_1)$ is simply structured.
\end{proof} 

From Proposition \ref{prop:n-k}, we conclude that the minimum number of vectors in the basis of $\mathcal{E}_{L}(n-l_1)$ is $\left\lceil\frac{l_1}{2}\right\rceil$. This result provides the answer to Problem \ref{problema A1} for $\mathcal{E}_{L}(n-l_1).$ Now, let $G$ be a connected threshold graph with binary sequence \break 
$\mathbf{b}=(b_1, \ldots, b_{l_2-1}, b_{l_2}, b_{l_2+1}, \ldots, b_{l_1}, b_{l_1+1}, \ldots, b_{n-1}, b_n)$ 
such that $b_{l_2}=1$, \break $b_{l_2+1}=\cdots=b_{l_1}=0$ and $b_{l_1+1}=\cdots=b_{n}=1$, for some $l_1\in \{1,\ldots, \lfloor\frac{n}{2}\rfloor\}$ and $l_2\in \{2,\ldots, \lfloor\frac{l_1}{2}\rfloor\}$. Then $\mathcal{E}_L(n)$ and $\mathcal{E}_L(n-l_1)$ are always  simply structured, regardless of the positions $b_{1}, \ldots, b_{l_2-1}$ of the binary sequence $\mathbf{b}$. Therefore, Propositions $\ref{prop:n}$ and \ref{prop:n-k} provide an important starting point to determine the threshold graphs that are simply structured. 

%%%%%%%%%%%%%%%%%%%%%%%%%%%%%%%%%%%%%%%%%%%%%%%%%%%%%%%%%%%%%%%%%%%%%%%%%%%%%%%%%%%%%%%%%%%%%%%%%%%%%%%%%%%%%%%%%%%%%%%%%%%%%%%%%%%%%%%%%%%%%%%%%%%%%%%%%%%%%%%%%%%%%%%%%%%%%%%%%%%%

\subsection{Simply structured threshold graphs}\label{section SS}

\bigskip

Using the results from the previous subsection, we derive necessary and sufficient conditions for a threshold graph to be simply structured. 

Let $G$ be a connected threshold graph on $n$ vertices with binary sequence given by $\mathbf{b}=\mathbf{0}^{s_1}\mathbf{1}^{t_1}\cdots\mathbf{0}^{s_r}\mathbf{1}^{t_r}.$ Since $n=\sum_{i=1}^r (s_i+t_i)$ we have that $1\leq r\leq \lfloor\frac{n}{2}\rfloor$. Partition $V(G)$ as $V(G)=U_1\sqcup V_1 \sqcup \ldots \sqcup U_r \sqcup V_r$ where the set $U_i$ consists of the $i$th group of consecutive isolated vertices in the construction of $G$ and thus $|U_i|=s_i$, and similarly, $V_i$ consists of the $i$th group of dominating vertices in the construction of $G$ and thus $|V_i|=t_i$. If $s_1\geq 2$ then $\{U_1, V_1, \ldots, U_r, V_r\}$ is the degree partition of $G$ while if $s_1=1$ then the degree partition is $\{U_1\sqcup V_1, U_2, V_2, \ldots, U_r, V_r\}$. In any case, each subset $U_i$ is a coclique and each subset $V_i$ is a clique \cite{antiregular}. Figure \ref{threshold} illustrates the degree partition of a threshold graph;
a dashed line between $U_i$ and $V_j$ indicates that all vertices in $U_i$ are adjacent to all vertices in $V_j$, and the rectangle indicates that $V_1 \sqcup \cdots \sqcup V_r$ is a clique.

The degree sequence of $G$ is $d=(d_{t_r}, d_{t_{r-1}}, \cdots, d_{t_1}, d_{s_1}, \cdots, d_{s_{r-1}}, d_{s_r})$ where $d_{t_r} \geq \cdots \geq d_{t_1} \geq d_{s_1} \geq \cdots \geq d_{s_r}$. Moreover, each block $V_i$ of $1$'s with size $t_i$ and each block $U_i$ of $0$'s with size $s_i$, we have 
\begin{align}\label{degrees}
    d_{t_i} = n - 1 - \displaystyle\sum_{j=i+1}^r s_j \ \ \ \text{and} \ \ \  d_{s_i} = \displaystyle\sum_{j=i}^r t_j, \ \ \ \text{for} \  1\leq i\leq r,  
\end{align}
respectively, and $\text{tr}(G)=\sum_{i=1}^{r}t_r$.

\begin{lema} 
\label{specG}
Let $G$ be a threshold graph on $n$ vertices with  $\mathbf{0}^{s_1}\mathbf{1}^{t_1}\cdots \mathbf{0}^{s_r}\mathbf{1}^{t_r}$. If $s_1=1$, then 
\begin{align*}
    \spec(G) = \Big\{ 
    & n^{(t_r)}, (n-s_r)^{(t_{r-1})}, \cdots, (n-(s_2+\cdots+s_r))^{(t_1)}, \\
    & (t_2+\cdots+t_r)^{(s_2)}, \cdots, (t_{r-1}+t_r)^{(s_{r-1})}, (t_r)^{(s_r)}, \mathbf{0}^{(1)} \Big\}.
\end{align*}
If $s_1 \geq 2,$ then 
\begin{align*}
    \spec(G) = \Big\{ 
    & n^{(t_r)}, (n-s_r)^{(t_{r-1})}, \cdots, (n-(s_2+\cdots+s_r))^{(t_1)}, \\
    & (t_1+\cdots+t_r)^{(s_1-1)}, (t_2+\cdots+t_r)^{(s_2)}, \cdots, \\ 
    & (t_{r-1}+t_r)^{(s_{r-1})}, (t_r)^{(s_r)}, \mathbf{0}^{(1)} \Big\}.
\end{align*}
Moreover, the exponents represent eigenvalue multiplicities.
\end{lema}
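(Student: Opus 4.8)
The plan is to read the Laplacian spectrum straight off the degree sequence via the eigenvalue formula \eqref{spectrum  threshold}, substituting the block degrees from \eqref{degrees}. First I would set up the bookkeeping: $\text{tr}(G)=t_1+\cdots+t_r$, hence $n-\text{tr}(G)=s_1+\cdots+s_r$, and, arranged in nonincreasing order, the degree sequence of $G$ lists the dominating-vertex degrees $d_{t_r}$ (with multiplicity $t_r$), $d_{t_{r-1}}$ (with multiplicity $t_{r-1}$), \dots, $d_{t_1}$ (with multiplicity $t_1$) in positions $1,\dots,\text{tr}(G)$, followed by the isolated-vertex degrees $d_{s_1}$ (with multiplicity $s_1$), $d_{s_2}$ (with multiplicity $s_2$), \dots, $d_{s_r}$ (with multiplicity $s_r$) in positions $\text{tr}(G)+1,\dots,n$. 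By \eqref{degrees}, $d_{t_i}$ is nondecreasing in $i$, $d_{s_i}$ is strictly decreasing in $i$, and $d_{t_1}=s_1+t_1+\cdots+t_r-1\ge t_1+\cdots+t_r=d_{s_1}$, so this ordering is indeed nonincreasing.

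Next I would apply \eqref{spectrum  threshold} range by range. On $1\le i\le\text{tr}(G)$, where $\mu_i=d_i+1$, the degrees $d_1,\dots,d_{\text{tr}(G)}$ are exactly the dominating-vertex degrees, so by \eqref{degrees} we obtain the eigenvalues $d_{t_r}+1=n$, $d_{t_{r-1}}+1=n-s_r$, \dots, $d_{t_1}+1=n-(s_2+\cdots+s_r)$ with multiplicities $t_r,\dots,t_1$; this is the first line of both claimed formulas, and $\mu_n=0$ supplies the trailing $0^{(1)}$. On $\text{tr}(G)+1\le i\le n-1$ we have $\mu_i=d_{i+1}$, so as $i$ runs over this set, $i+1$ runs over $\{\text{tr}(G)+2,\dots,n\}$; hence $\mu_i$ runs through the isolated-vertex degrees $d_{s_1},\dots,d_{s_r}$ except that the single position $\text{tr}(G)+1$, which carries the largest isolated degree $d_{s_1}=t_1+\cdots+t_r$, is omitted. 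Consequently $t_1+\cdots+t_r$ appears with multiplicity $s_1-1$, whereas $d_{s_i}=t_i+\cdots+t_r$ appears with its full multiplicity $s_i$ for $2\le i\le r$. When $s_1=1$ this multiplicity is $0$ and the eigenvalue $t_1+\cdots+t_r$ is absent, giving the first formula; when $s_1\ge 2$ it survives with multiplicity $s_1-1$, giving the second. This single ``off-by-one'' — the fact that \eqref{spectrum  threshold} uses $d_{i+1}$, not $d_i$, on the middle block — is the only delicate point of the proof and is exactly what produces the dichotomy between the two cases.

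Finally I would check consistency: the total multiplicity of the listed spectrum is $\sum_{i=1}^{r}t_i+(s_1-1)+\sum_{i=2}^{r}s_i+1=\sum_{i=1}^{r}(s_i+t_i)=n$ (reading $s_1-1$ as $0$ when $s_1=1$), and the listed values are pairwise distinct, since the ``large'' ones are all at least $s_1+t_1+\cdots+t_r>t_1+\cdots+t_r$ while the ``small'' ones are strictly decreasing and bounded above by $t_1+\cdots+t_r$. Hence no two of the displayed values coincide, so the exponents are genuinely the eigenvalue multiplicities, completing the proof.
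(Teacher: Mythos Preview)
Your proof is correct and follows exactly the paper's approach, which is simply to combine \eqref{spectrum threshold} and \eqref{degrees}; you have merely supplied the bookkeeping (the ordering of the degree sequence, the off-by-one shift on the middle block, and the distinctness check) that the paper leaves implicit.
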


\begin{proof}
This follows by combining \eqref{spectrum  threshold} and \eqref{degrees}.
\end{proof}

\begin{figure}[H]
    \centering
    \includegraphics[scale=0.17]{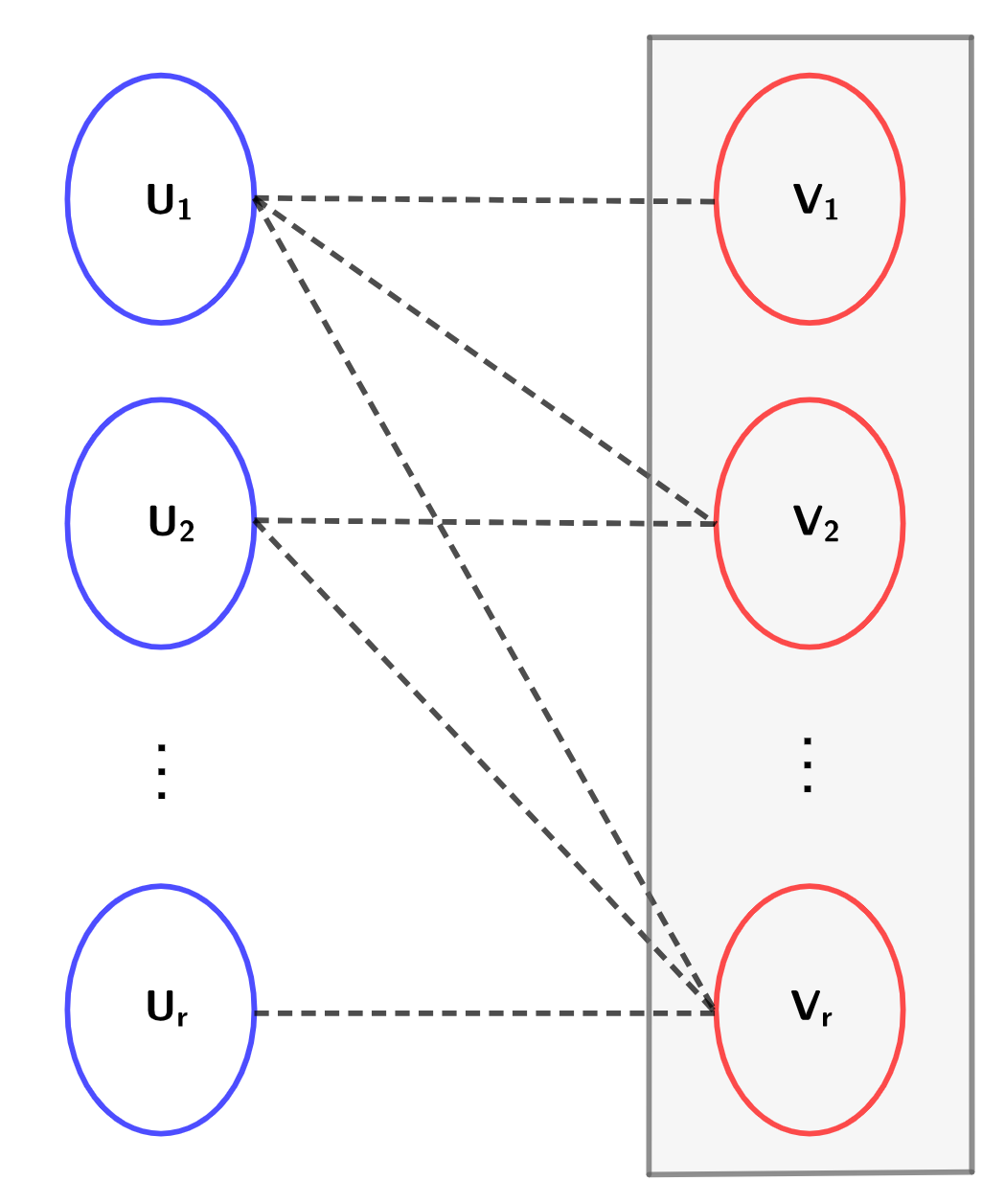}
    \caption{{\small The structure of a threshold graph with  $\mathbf{b}=\mathbf{0}^{s_1}\mathbf{1}^{t_1}\cdots \mathbf{0}^{s_r}\mathbf{1}^{t_r}$. Each vertex in $U_i$ is adjacent to $V_i \sqcup \cdots \sqcup V_r$, $V_1 \sqcup \cdots \sqcup V_r$ is a clique and $U_1 \sqcup \cdots \sqcup U_r$ is a coclique.}}
\label{threshold}
\end{figure}

The next theorem characterizes when a connected threshold graph of a given order $n$ is simply structured.

\begin{teorema}\label{teorema ss}
Let $G$ be a threshold graph on $n$ vertices with $\mathbf{b}=\mathbf{0}^{s_1}\mathbf{1}^{t_1}\cdots \mathbf{0}^{s_r}\mathbf{1}^{t_r}$. Then, $G$ is simply structured if and only if  
\begin{align}
    1\leq r \leq \left\lfloor \frac{\left\lfloor \log_2(n)  \right\rfloor + 1}{2} \right\rfloor,
\end{align}
for each $i=1, \ldots, r$,
    \begin{align}\label{s1-ss}
        \left\lceil \frac{n-\sum_{j=i+1}^r(s_j+t_j)}{2} \right\rceil \leq & \ \ t_i \ \leq n-\sum_{j=i+1}^r(s_j+t_j) -2^{2i-2}, 
    \end{align}
and for each $i=2, \ldots, r$,
\begin{align}\label{s1-ss-2}
        \left\lceil \frac{n-t_i-\sum_{j=i+1}^r(s_j+t_j)}{2} \right\rceil \leq & \ \ s_i \ \leq n-t_i-\sum_{j=i+1}^r(s_j+t_j) -2^{2i-3}, 
    \end{align}
with $s_1 = n-t_1-\sum_{j=2}^r(s_j+t_j)$.
\end{teorema}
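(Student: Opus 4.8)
The plan is to recursively apply Propositions \ref{prop:n} and \ref{prop:n-k} through all the eigenspaces of $L(G)$, working from the largest eigenvalue $n$ downward. By Lemma \ref{specG}, the nonzero eigenvalues of $L(G)$ come in blocks indexed by $i=1,\ldots,r$: for each $i$ there is an eigenvalue of the form $n-\sum_{j=i+1}^r s_j$ with multiplicity $t_i$ (coming from the block $V_i$ of $1$'s), and an eigenvalue of the form $\sum_{j=i}^r t_j$ with multiplicity $s_i$ (coming from the block $U_i$ of $0$'s). Since $\mathcal E_L(0)$ is always one-dimensional and spanned by $\mathbbm 1_n$, the graph $G$ is simply structured if and only if every one of these $2r-1$ (or $2r$, when $s_1\geq 2$) eigenspaces is simply structured. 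The key observation is that the eigenspace attached to the block $V_i$ is exactly $\mathrm{span}\{\mathbf x^{l},\ldots,\mathbf x^{l+t_i-1}\}$ where $l$ is the index of the first vertex of $V_i$, and the eigenspace attached to $U_i$ has the analogous form; so Proposition \ref{prop:n} (respectively \ref{prop:n-k}), applied with the appropriate choice of ``$n$'' and ``$l_1$'', decides simple-structuredness of each block individually.

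First I would set up the bookkeeping: let $N_i = n - \sum_{j=i+1}^r(s_j+t_j)$ be the number of vertices appearing in the first $2i-1$ blocks $U_1,V_1,\ldots,U_i,V_i$ together with $U_{i+1}$ being truncated — more precisely $N_i$ is the index one past the last vertex of $V_i$. Then the block $V_i$ occupies positions $N_i-t_i+1,\ldots,N_i$, and the block $U_i$ occupies positions $N_i-t_i-s_i+1,\ldots,N_i-t_i$. Applying Proposition \ref{prop:n} to the ``sub-star'' on the first $N_i$ coordinates (this is legitimate because the vectors $\mathbf x^1,\ldots,\mathbf x^{N_i-1}$ restricted to those coordinates behave exactly as in the star $S_{N_i}$, the later coordinates being forced to zero by the argument already used in Proposition \ref{prop:n-k}) shows $\mathcal E_L$ of the $V_i$-block is simply structured iff $1\leq N_i-t_i\leq\lfloor N_i/2\rfloor$; the upper inequality here, after iterating, is what produces the power $2^{2i-2}$, and the lower inequality gives the ceiling bound in \eqref{s1-ss}. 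Likewise Proposition \ref{prop:n-k}, applied with $l_1$ replaced by $N_i-t_i$, gives that the $U_i$-block is simply structured iff $2\leq N_i-t_i-s_i\leq\lfloor (N_i-t_i)/2\rfloor$, which unwinds to \eqref{s1-ss-2} and the exponent $2^{2i-3}$. The constraint on $r$ then falls out by composing these: each successive block forces the ``remaining length'' to be at least doubled, so after $r$ steps we need roughly $2^{2r-?}\leq n$, i.e. $r\leq\lfloor(\lfloor\log_2 n\rfloor+1)/2\rfloor$; one has to be slightly careful about whether $s_1=1$ or $s_1\geq2$, but in both cases the innermost forced quantity is a power of $2$ bounded by $n$.

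The main obstacle, and the step I would spend the most care on, is justifying that ``simply structured'' globally is equivalent to ``simply structured blockwise'', i.e. that one may choose a simply-structured basis of each eigenspace independently and the union is automatically a basis of $\mathbb R^n$ — this direction is easy — but also the converse, that if some single block fails the inequality then no simply-structured eigenbasis of $L(G)$ exists at all. This converse needs the observation that a simply-structured eigenbasis of $L(G)$ must restrict to a simply-structured basis of each eigenspace separately (since eigenvectors for distinct eigenvalues lie in distinct eigenspaces), so the obstruction really is local; this is exactly where the ``only if'' halves of Propositions \ref{prop:n} and \ref{prop:n-k} are invoked. A secondary technical point is verifying that the restriction-to-first-$N_i$-coordinates argument is valid for the interior blocks and not just for $\mathcal E_L(n)$ and $\mathcal E_L(n-l_1)$ as literally stated; I would either re-run the linear-system computation of Proposition \ref{prop:n-k} in this slightly more general index range, or phrase a short lemma extracting the common combinatorial core (``for the span of $\mathbf x^{a},\ldots,\mathbf x^{b}$, a simply-structured basis exists iff $a\leq\lceil b/2\rceil$ when $a\geq 2$, and iff $a=1$-type conditions otherwise''), from which Theorem \ref{teorema ss} follows by bookkeeping alone. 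Finally I would double-check the edge cases $r=1$ (where $G=K_n$ or a complete-split-type graph and the bounds should reduce to known facts) and the discrepancy between $s_1=1$ and $s_1\geq 2$ in Lemma \ref{specG}, confirming the displayed formula $s_1=n-t_1-\sum_{j=2}^r(s_j+t_j)$ is consistent with $n=\sum_{i=1}^r(s_i+t_i)$.
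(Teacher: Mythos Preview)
Your proposal is correct and follows essentially the same approach as the paper's own proof: both arguments recursively apply Propositions~\ref{prop:n} and~\ref{prop:n-k} block by block, derive the chain of inequalities $2^{2r-1-i}\le l_i\le\lfloor n/2^i\rfloor$ (the paper uses the change-point indices $l_i$ while you use the cumulative lengths $N_i$, but these are the same data reindexed), translate these into the bounds \eqref{s1-ss}--\eqref{s1-ss-2} on the $s_i,t_i$, and read off the bound on $r$ from the innermost inequality. If anything, you are more explicit than the paper about two points it leaves implicit---that ``globally simply structured'' is equivalent to ``each eigenspace simply structured'' (automatic, since an eigenbasis decomposes eigenspace by eigenspace), and that Propositions~\ref{prop:n}--\ref{prop:n-k} must be re-applied with shifted indices for the interior blocks rather than just the two outermost ones---so your plan to extract a common lemma of the form ``$\mathrm{span}\{\mathbf x^a,\ldots,\mathbf x^b\}$ has a simply-structured basis iff $a\le\lfloor (b+1)/2\rfloor$'' is a cleaner way to package the recursion than the paper's somewhat terse ``applying Propositions~\ref{prop:n} and~\ref{prop:n-k} recursively''.
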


\begin{proof}
Applying Propositions $\ref{prop:n}$ and \ref{prop:n-k} recursively, we obtain that each $l_{i} $ is bounded above by $\left\lfloor \frac{n}{2^{i}} \right\rfloor.$ 
To determine the maximum possible value to $r$ we choose the maximum value to each $l_{i}.$ Thus we can write
\begin{align*}
    n=\left\lceil \frac{n}{2} \right\rceil + \left\lceil \frac{n}{2^2} \right\rceil + \left\lceil \frac{n}{2^3} \right\rceil + \left\lceil \frac{n}{2^4} \right\rceil + \cdots + \left\lceil \frac{n}{2^{2m-1}} \right\rceil + \left\lfloor \frac{n}{2^{2m-1}} \right\rfloor,
\end{align*}
where $\left\lfloor \frac{n}{2^{2m-1}} \right\rfloor$ should be at least 1.  Thus, if $\left\lfloor \frac{n}{2^{2m-1}} \right\rfloor = 1$, we easily obtain that $m=\left\lfloor \frac{\left\lfloor \log_2(n)  \right\rfloor + 1}{2} \right\rfloor$. It implies that $1\leq r \leq \left\lfloor \frac{\left\lfloor \log_2(n)  \right\rfloor + 1}{2} \right\rfloor.$

By choosing $r$ such that $1\leq r \leq \left\lfloor \frac{\left\lfloor \log_2(n)  \right\rfloor + 1}{2} \right\rfloor$, and from  Propositions $\ref{prop:n}$ and \ref{prop:n-k}, for each $i=1,\ldots,2r-1$ we get that 
\begin{equation}\label{in:li}
    2^{2r-1-i} \leq l_{i} \leq \left\lfloor \frac{n}{2^{i}} \right\rfloor. 
\end{equation}
Since $t_r = \mathrm{dim}(\mathcal{E}_{L}(n)) = n - l_1 $, $s_r = \mathrm{dim}(\mathcal{E}_{L}(n-l_1)) = l_1-l_2$, we can relate the $s_i$ and $t_i$ to the $l_i$'s as follows:
\begin{align}\label{eq:stl}
    \begin{array}{llll}
     s_1 & = & l_{2r-1},  \\
     s_i & = & l_{2(r-i)+1} - l_{2(r-i)+2}, & \text{for} \ \ 2\leq i \leq r,\\
     t_i & = & l_{2(r-i)} - l_{2(r-i)+1}, & \text{for} \ \ 1\leq i \leq r-1,\\
     t_r & = & n - l_{1}.  \\
\end{array}
\end{align}
Replacing each $l_i$ in \eqref{in:li} by the relations of \eqref{eq:stl}, we obtain

\begin{align*}
    2^{2r-2} & \leq n-t_r \leq \left\lfloor \frac{n}{2} \right\rfloor \\ 
    2^{2r-3} & \leq  n-(t_r+s_r) \leq \left\lfloor \frac{n-t_r}{2} \right\rfloor \\
    & \ \ \vdots  \\
    2^1 & \leq n-(t_r+s_r+\ldots+t_2+s_2) \leq \left\lfloor \frac{n-(t_r+s_r+\ldots+t_2)}{2} \right\rfloor \\
    2^0 & \leq n-(t_r+s_r+\ldots+t_2+s_2+t_1) \leq \left\lfloor \frac{n-(t_r+s_r+\ldots+t_2+s_2)}{2} \right\rfloor,
\end{align*}
that is, 
\begin{align*}
    \left\lceil \frac{n}{2} \right\rceil & \leq t_r \leq n-2^{2r-2} \\ 
    \left\lceil \frac{n-t_r}{2} \right\rceil & \leq s_r \leq n-t_r-2^{2r-3} \\
    & \vdots \\
    \left\lceil \frac{n-t_2-(s_{3}+t_{3}+\cdots+s_r+t_r)}{2} \right\rceil & \leq s_2 \leq n-t_2-(s_{3}+t_{3}+\cdots+s_r+t_r)-2^1 \\
    \left\lceil \frac{n-(s_2+t_2+\cdots+s_{r}+t_{r})}{2} \right\rceil & \leq t_1 \leq  n-(s_2+t_2+\cdots+s_{r}+t_{r})-2^0\\
     s_1 & = n-t_1-(s_2+t_2+\cdots+s_{r}+t_{r}),
\end{align*}
and the result follows. 

The other side of the prove is clear since Propositions \ref{prop:n} and \ref{prop:n-k}  characterizations of the bounds to each $l_i$, which implies the dimension of each eigenspace, and it is related to number of zeros and ones in the binary sequence. 
\end{proof}

%%%%%%%%%%%%%%%%%%%%%%%%%%%%%%%%%%%%%%%%%%%%%%%%%%%%%%%%%%%%%%%%%%%%%%%%%%%%%%%%%%%%%%%%%%%%%%%%%%%%%%%%%%%%%%%%%%%%%%%%%%%%%%%%%%%%%%%%%%%%%%%%%%%%

\section{WHD threshold graphs}
\label{sec:4}

Building on the previous characterization of simply structured connected threshold graphs, we now obtain WHD threshold graphs by establishing sufficient conditions for a simply structured threshold graph to be WHD. Before this, we introduce some useful results.

Some families of WHD threshold graphs have already been studied. In \cite{WHD}, Adm \emph{et al.} showed that the threshold graphs $G=K_k^c\vee K_{n-k}$ are WHD under certain conditions on $n$ and $k$. 

\begin{proposition}
\label{heberk>3}
Let $G=K_k^c\vee K_{n}$.  If $n-k \in \{0, 1, 2\}$, then $G$ is WHD.
\end{proposition}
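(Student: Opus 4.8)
The plan is to exhibit an explicit weak Hadamard matrix $W$ whose columns form a $\{-1,0,1\}$-eigenbasis of $L(G)$. Write $N=n+k$, and label the vertices so that the copy of $K_k^c$ is the coclique $U=\{1,\dots,k\}$ and the copy of $K_n$ is the clique $C=\{k+1,\dots,k+n\}$. I would first record the spectral picture: by Lemma~\ref{specG} (with $r=1$, $s_1=k$, $t_1=n$), $\spec(L(G))=\{0^{(1)},\,n^{(k-1)},\,(n+k)^{(n)}\}$, and a one-line verification gives $\mathcal{E}_L(0)=\mathrm{span}\{\mathbbm{1}_N\}$, $\mathcal{E}_L(n)=\{\mathbf{x}\in\mathbb{R}^N:\mathrm{supp}(\mathbf{x})\subseteq U,\ \mathbf{x}^T\mathbbm{1}_N=0\}$, and (since $L(G)$ is symmetric) $\mathcal{E}_L(n+k)=\{\mathbf{x}\in\mathbb{R}^N:\mathbf{x}\text{ is constant on }U,\ \mathbf{x}^T\mathbbm{1}_N=0\}$.

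Next I would choose the columns of $W$, in the following order: $w_0=\mathbbm{1}_N$; then $g_i=e_i-e_{i+1}$ for $i=1,\dots,k-1$; then $h_i=e_{k+i}-e_{k+i+1}$ for $i=1,\dots,n-1$; and finally the single ``mixed'' vector $w_\ast=\sum_{i\in U}e_i-\sum_{j=k+1}^{k+n-1}e_j+(n-k-1)\,e_{k+n}$. Every entry lies in $\{-1,0,1\}$ exactly because $n-k-1\in\{-1,0,1\}$, which is where the hypothesis $n-k\in\{0,1,2\}$ first enters (it also forces $n\ge k$, which is what makes $w_\ast$ a legitimate vector at all). Clearly $g_i\in\mathcal{E}_L(n)$ and $h_i\in\mathcal{E}_L(n+k)$, while $w_\ast$ is constant on $U$ with $w_\ast^T\mathbbm{1}_N=k-(n-1)+(n-k-1)=0$, so $w_\ast\in\mathcal{E}_L(n+k)$. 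A short independence check — the $g_i$ form a basis of $\mathcal{E}_L(n)$, the $h_i$ together with $w_\ast$ form a basis of $\mathcal{E}_L(n+k)$ because $w_\ast$ has a nonzero coordinate on $U$ whereas every $h_i$ vanishes on $U$, and $\mathbbm{1}_N$ lies outside $\mathcal{E}_L(n)\oplus\mathcal{E}_L(n+k)$ — then shows that $W$ is invertible and $W^{-1}L(G)W$ is the diagonal matrix of eigenvalues.

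The crux is checking that $W^TW$ is tridiagonal, i.e. that $w_a^Tw_b=0$ whenever the columns $w_a,w_b$ are non-consecutive in the above ordering. This comes down to a finite list of inner products: $w_0$ is orthogonal to every other column (all of which sum to $0$); $g_i^Tg_j=0$ and $h_i^Th_j=0$ for $|i-j|\ge2$; $g_i^Th_j=0$ for all $i,j$ and $g_i^Tw_\ast=0$ for all $i$ (disjoint supports, resp.\ $w_\ast$ constant on $U$); and $h_i^Tw_\ast=0$ for $1\le i\le n-2$, because $w_\ast$ takes the constant value $-1$ on the clique coordinates $k+1,\dots,k+n-1$, so its only change of value inside $C$ occurs between positions $k+n-1$ and $k+n$. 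Hence the only off-diagonal entries of $W^TW$ that can be nonzero are $g_i^Tg_{i+1}=-1$, $h_i^Th_{i+1}=-1$, and $h_{n-1}^Tw_\ast=k-n$, each between consecutive columns; so $W^TW$ is tridiagonal. Therefore $W$ is a weak Hadamard matrix diagonalizing $L(G)$, and $G$ is WHD.

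I expect Step~3 to be the real obstacle. Exhibiting \emph{some} $\{-1,0,1\}$-eigenbasis is routine, but arranging the columns so that $W^TW$ is tridiagonal forces a careful choice of the lone ``mixed'' eigenvector $w_\ast$ for the eigenvalue $n+k$: one must concentrate all of $w_\ast$'s deviation from being a constant vector into a single clique coordinate — the last one — so that $w_\ast$ meets only the final vector $h_{n-1}$ of the $h$-chain and can be placed at the end of a single path-like ordering. Once the columns are recognized as forming such a path, tridiagonality is immediate.
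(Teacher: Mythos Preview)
Your proof is correct and follows essentially the same approach the paper takes: the paper does not prove Proposition~\ref{heberk>3} directly (it is attributed to \cite{WHD}), but its proof of the generalization Proposition~\ref{HjoinKn} uses exactly the same columns---the differences $e_i-e_{i+1}$ on the coclique and on the clique, the single ``mixed'' vector $(\underbrace{1,\dots,1}_{k},\underbrace{-1,\dots,-1}_{n-1},n-k-1)$, and $\mathbbm{1}_{n+k}$---with the all-ones vector placed last rather than first in the ordering. Your tridiagonality check is more explicit than the paper's, but the construction is the same.
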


Note that for $k=1$ and $n\geq 2$, then $G \cong K_{n}$ is WHD; if $k=2$ and $n\geq 4$, then $G \cong K_{n} - e$ is WHD.  Macharete et al.\ in \cite[Propositions 4.1 and 4.2]{MDTL24} proved that $K_{k}^{c} \vee K_{n}$ plus an edge is still a WHD graph.

\begin{proposition}\label{heberG+e}
    Let $G=K_k^c\vee K_{n}$ such that $n-k \in \{0, 1, 2\}$ and $k\geq 4$. Then, $G^{\prime} = G+e$ is a WHD threshold graph.
\end{proposition}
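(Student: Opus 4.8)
The plan is to add a single edge to $G = K_k^c \vee K_n$ to obtain $G'$, then explicitly exhibit a weak Hadamard matrix that diagonalizes $L(G')$, starting from (a modification of) the weak Hadamard matrix for $L(G)$ guaranteed by Proposition~\ref{heberk>3}. First I would fix notation: label the vertices so that the clique $K_n$ occupies one block and the coclique $K_k^c$ occupies the other, and let $e$ join two vertices $u,v$ inside the coclique $K_k^c$ (this is forced — adding an edge anywhere else either creates a graph that is not threshold or is covered by a trivial relabeling). Adding $e$ inside the coclique makes $u$ and $v$ a clique $K_2$ that is still joined to all of $K_n$ and separated from the remaining $k-2$ isolated vertices, so $G'$ is again a connected threshold graph; I would record its binary sequence and, via Lemma~\ref{specG} together with \eqref{spectrum  threshold}, write down $\spec(G')$ explicitly, noting that it differs from $\spec(G)$ only by splitting off the eigenvalue contributed by the new $K_2$ block (the multiplicity of the eigenvalue $n-k+2$ drops by one and a new eigenvalue $n$ appears, or the analogous shift).

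Next I would describe the eigenvectors concretely using Theorem~\ref{TeoPrincipal}: since $G'$ is threshold of order $m := n+k$, the shared star-eigenbasis $\{\mathbf{x}^1,\dots,\mathbf{x}^m\}$ of \eqref{base da estrela} diagonalizes $L(G')$, and by Propositions~\ref{prop:n} and~\ref{prop:n-k} the relevant eigenspaces are simply structured in exactly the regime $k \ge 4$ (this is where the hypothesis $k\ge 4$ enters — it guarantees $\lfloor k/2\rfloor \ge 2$ so the $\mathbf{u}'$-type vector exists). The key step is then to take the weak Hadamard matrix $W$ that works for $G$ — whose columns are the $\{-1,0,1\}$ eigenvectors of the form $e_i - e_{i+1}$ and the block vectors $\mathbf{v}',\mathbf{u}'$ from \eqref{vi estruturado(n)}, \eqref{v' estruturado(n)} — and show that deleting the edge-column corresponding to the old split between $u,v$ and inserting the new eigenvector for the eigenvalue gained (again a $\{-1,0,1\}$ vector of the same form) produces a matrix $W'$ that is still weak Hadamard, i.e.\ $(W')^T W'$ is still tridiagonal. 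Because the eigenvectors involved are all of the "consecutive difference" type $e_i - e_{i+1}$ or the nested partial-sum type, their pairwise inner products vanish unless the index blocks are adjacent, which is precisely what tridiagonality of $(W')^T W'$ demands; I would verify this by a direct inner-product computation on the at most three affected columns.

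The main obstacle I anticipate is the bookkeeping at the single "seam" where the structure of $G'$ differs from $G$: one must check that, after reordering the columns of $W'$ to respect the ordering of the eigenvalues (tridiagonality is sensitive to column order), the new column for the eigenvalue $n$ and the modified columns for the eigenvalue $n-k+2$ have inner product $\pm 1$ or $0$ with exactly their intended neighbors and $0$ with everything else. This requires pinning down the precise position of the new $K_2$ block inside the degree partition and confirming that the ``non-adjacent-block inner products vanish'' phenomenon from the proofs of Propositions~\ref{prop:n} and~\ref{prop:n-k} is preserved; the hypotheses $n-k\in\{0,1,2\}$ and $k\ge 4$ are exactly what make the affected blocks small enough that this local check closes. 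Everything else is routine once the analogous claim for $G$ (Proposition~\ref{heberk>3}) is in hand, so I would keep the computation to the three columns that actually change and invoke Proposition~\ref{heberk>3} verbatim for the rest.
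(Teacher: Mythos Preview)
The paper does not actually supply its own proof of this proposition; it attributes the result to \cite{MDTL24}. So there is no line-by-line comparison to make. That said, your outline has the right shape but misdiagnoses the crucial step.

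First, the bookkeeping is off. With $G=K_k^c\vee K_n$ on $m=n+k$ vertices one has $\spec(G)=\{(n+k)^{(n)},\,n^{(k-1)},\,0\}$ (Lemma~\ref{specG}); after adding the edge inside the coclique, $G'$ has binary sequence $\mathbf{0}\mathbf{1}\mathbf{0}^{k-2}\mathbf{1}^{n}$ and $\spec(G')=\{(n+k)^{(n)},\,(n+2)^{(1)},\,n^{(k-2)},\,0\}$. The new simple eigenvalue is $n+2$, not ``$n$'', and its eigenvector is $\mathbf{x}^1=e_1-e_2$, which was \emph{already} a column of any reasonable weak Hadamard $W$ for $G$. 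So nothing needs to be inserted for the new eigenvalue; the column that must be \emph{replaced} is $e_2-e_3$, because $e_2-e_3=\tfrac12(\mathbf{x}^2-\mathbf{x}^1)$ mixes the $(n+2)$- and $n$-eigenspaces of $G'$ and is no longer an eigenvector.

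Second, and more seriously, replacing that one column while keeping $W^TW$ tridiagonal is not the routine check you describe. The surviving $n$-eigenvectors $e_3-e_4,\ldots,e_{k-1}-e_k$ already form a path in the non-orthogonality graph. The natural replacement $\mathbf{u}'=e_1+e_2-e_3-e_4$ from Proposition~\ref{prop:n-k} satisfies $\langle \mathbf{u}',\,e_4-e_5\rangle=-1$, so for $k\ge 6$ the vertex $e_4-e_5$ acquires \emph{three} non-orthogonal neighbours ($e_3-e_4$, $e_5-e_6$, and $\mathbf{u}'$), which is incompatible with any tridiagonal ordering. Fixing this forces you to rebuild a larger portion of the $n$-eigenspace basis (for instance, interleaving vectors of the form $e_{2j-1}+e_{2j}-e_{2j+1}-e_{2j+2}$ with the differences $e_{2j+1}-e_{2j+2}$ works, but this is exactly the substantive construction the proof needs, not an afterthought). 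Your proposal treats this as ``bookkeeping at a single seam''; it is in fact the heart of the argument and must be carried out explicitly for general $k\ge 4$.
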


In \cite{WeakMatrices}, an infinite family of WHD threshold graphs of order $2^{l}$ for $l\geq 1$ are provided. In Proposition \ref{heberG+e}, we construct an infinite family of WHD threshold graphs that are not described by their work. One such graph is $(K_2 \sqcup K_{6}^c) \vee K_8$. We are also able to generate threshold WHD graphs for any number of vertices, and we are not restricted to threshold graphs of order $n = 2^{l}$ for $l \geq 1.$ These facts make the graphs obtained from Proposition \ref{heberG+e} interesting. 

The result of Proposition \ref{heberG+e} can be generalized as presented in \cite{WHD}. We present an alternative and more precise proof of this result here since it is crucial to prove the main result of this section.  

\begin{proposition}\label{HjoinKn}
Let $G=H \, \vee \, K_{n}$ where $H$ is a WHD connected graph on $k\geq 2$ vertices. If $n-k \in \{0, 1, 2\}$, then $G$ is WHD.
\end{proposition}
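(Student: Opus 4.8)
The plan is to diagonalize $L(G)$ block-wise by combining a weak Hadamard matrix $W_H$ that diagonalizes $L(H)$ with the known eigenstructure of the Laplacian of a join. Recall that if $G = H \vee K_n$ with $|V(H)| = k$, then writing vertices of $H$ first, we have
\begin{align*}
L(G) = \begin{pmatrix} L(H) + n I_k & -J_{k\times n} \\ -J_{n\times k} & L(K_n) + k I_n \end{pmatrix},
\end{align*}
and the spectrum of $L(G)$ is obtained from that of $H$ and $K_n$ by the standard join formula: the all-ones vector $\mathbbm{1}_{k+n}$ is a $0$-eigenvector; $\binom{\mathbbm{1}_k}{-\frac{k}{n}\mathbbm{1}_n}$ (suitably scaled) is an eigenvector for $k+n$; every eigenvector $\mathbf{z}$ of $L(H)$ orthogonal to $\mathbbm{1}_k$ with eigenvalue $\mu$ lifts to $\binom{\mathbf{z}}{\mathbf{0}}$ with eigenvalue $\mu + n$; and every eigenvector $\mathbf{w}$ of $L(K_n)$ orthogonal to $\mathbbm{1}_n$ (eigenvalue $n$) lifts to $\binom{\mathbf{0}}{\mathbf{w}}$ with eigenvalue $n + k$. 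Since $H$ is WHD, the non-Fiedler eigenvectors of $L(H)$ can be chosen from the columns of a weak Hadamard matrix $W_H$; for $K_n$, the space $\mathbbm{1}_n^\perp$ admits the simply structured basis $e_i - e_{i+1}$, $i = 1,\dots,n-1$, which already forms a weak Hadamard block (its Gram matrix is tridiagonal — indeed the path Laplacian-type matrix).

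\textbf{Key steps.} First I would record the join decomposition above and assemble a candidate diagonalizing matrix
\begin{align*}
W = \begin{pmatrix} W_H' & \mathbbm{1}_k & * & \mathbf{0} \\ \mathbf{0} & * & * & W_{K_n}' \end{pmatrix},
\end{align*}
where $W_H'$ is $W_H$ with its all-ones column removed, $W_{K_n}'$ has columns $e_i - e_{i+1}$, and the two middle columns are $\mathbbm{1}_{k+n}$ (for eigenvalue $0$) together with a $\{-1,0,1\}$-vector spanning the $(k+n)$-eigenspace. The only entry-value obstacle is this last column: $\binom{\mathbbm{1}_k}{-\frac{k}{n}\mathbbm{1}_n}$ is not integral unless $k = n$. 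This is exactly where the hypothesis $n - k \in \{0,1,2\}$ is used: for $n = k$ the vector is $\binom{\mathbbm{1}_k}{-\mathbbm{1}_n}$; for $n = k+1$ and $n = k+2$ one instead uses a different representative of the $2$-dimensional span obtained by combining the $(k+n)$-eigenvector with the already-available vectors $\binom{\mathbf{0}}{e_i-e_{i+1}}$ (or with the $0$-eigenvector $\mathbbm{1}$), clearing denominators to land in $\{-1,0,1\}$ — mimicking precisely the trick in Proposition~\ref{heberk>3} and the computations of Proposition~\ref{prop:n}. Second, having fixed all columns of $W$ to have entries in $\{-1,0,1\}$ and to be eigenvectors of $L(G)$, I would verify $W$ is invertible (the blocks are triangular-in-structure and each block is itself invertible) and compute $W^T W$, checking it is tridiagonal. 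Here one orders the columns carefully: the $W_H'$ block contributes the tridiagonal Gram matrix inherited from $W_H$ being weak Hadamard; the $W_{K_n}'$ block contributes the obviously tridiagonal Gram matrix of consecutive differences; and one must confirm the cross terms between blocks, and between these blocks and the two special columns, only populate the super/sub-diagonal — which holds because $W_H'$ sits in the first $k$ coordinates, $W_{K_n}'$ in the last $n$, so their mutual inner products vanish, and the special columns are placed adjacent to the appropriate block so that their nonzero inner products fall on the tridiagonal band.

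\textbf{Main obstacle.} The crux is arranging the column order and choosing the two ``special'' columns (the $0$- and $(k+n)$-eigenvectors, or suitable $\{-1,0,1\}$ replacements when $n > k$) so that \emph{simultaneously} (i) every column has entries in $\{-1,0,1\}$, (ii) the set is a basis of eigenvectors, and (iii) $W^T W$ is tridiagonal. Getting (i) for the $(k+n)$-eigenvector forces the case split $n-k\in\{0,1,2\}$, and getting (iii) forces a specific interleaving of the special columns with the $W_H'$ and $W_{K_n}'$ blocks; checking the off-band entries of $W^TW$ all vanish under that ordering is the one genuinely fiddly verification. Everything else — the join spectrum, invertibility, the tridiagonality within each block — is routine given that $H$ is WHD and given the explicit consecutive-difference basis for $\mathbbm{1}_n^\perp$.
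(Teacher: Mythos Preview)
Your approach is essentially the same as the paper's: both lift the weak Hadamard eigenbasis of $H$ and the consecutive-difference basis $e_i-e_{i+1}$ of $\mathbbm{1}_n^\perp$ to $G$, then adjoin $\mathbbm{1}_{n+k}$ and one further $(n+k)$-eigenvector with entries in $\{-1,0,1\}$, finally checking that the resulting Gram matrix is tridiagonal under a suitable column ordering. The paper makes the special vector explicit as $\mathbf{v}=(\underbrace{1,\dots,1}_{k},\underbrace{-1,\dots,-1}_{n-1},\,n-k-1)$, from which the condition $n-k\in\{0,1,2\}$ is immediate; your description (``combine with the $\binom{\mathbf{0}}{e_i-e_{i+1}}$ to clear denominators'') produces exactly this vector, though note that combining with $\mathbbm{1}_{n+k}$, which you also list as an option, would leave the $(n+k)$-eigenspace and so is not actually available.
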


\begin{proof}
Let $\spec(H)=\{\mu_\mathbf{1}^{(n_1)}, \ \mu_2^{(n_2)}, \ldots, \ \mu_l^{(n_l)}, \ \mathbf{0}^{(1)}\}$, then 
\begin{align*}
    \spec(G)=\{(n+k)^{(n)}, \ (n+\mu_1)^{(n_1)}, \ (n+\mu_2)^{(n_2)}, \ldots, \ (n+\mu_l)^{(n_l)}, \ \mathbf{0}^{(1)}\}.
    \end{align*}
Note that $\mathbbm{1}_{n+k}$ is an eigenvector of $L(G)$ associated with eigenvalue 0. We arrange the vertices of $G$ so that the first $k$ vertices are the vertices of $H.$ Denote by $u_{i_j}$ the eigenvectors of $L(H)$  associated with eigenvalues $\mu_i$. Since $H$ is WHD, the vectors $u_{i_j}$ form a weak Hadamard matrix which diagonalizes $L(H)$. Moreover, the vectors $u_{i_j}$ concatenated with $n$ zeros, denoted by $U_{i_j}$, form suitable eigenvectors for the eigenspace associated with $n+\mu_i$. 

Similarly, the vectors $\mathbf{v}_i=e_i-e_{i+1}$, with $i=k+1, \ldots, n+k-1$ are $n-1$ suitable eigenvectors of $L(G)$ for the eigenspace associated with $n+k$. The vector $\mathbf{v}=(\underbrace{1,1,\ldots,1}_{k},\underbrace{-1, -1, \ldots, -1}_{n-1},\underbrace{n-k-1}_1)$ is an eigenvector of $L(G)$ associated with eigenvalue $n+k$. Moreover, if $n-k \in\{0, 1, 2\}$, then the eigenvector $\mathbf{v}$ of $L(G)$ has entries from $\{-1, 0, 1\}$. Besides, $\mathbf{v}$ is not a linear combination of the set $\{\mathbf{v}_{k+1},\ldots, \mathbf{v}_{n+k-1}\}$ since the $k$ first entries of each $\mathbf{v}_{i}$ is equal to zero. 

If we form the matrix $W$ where the first $k-1$ columns are the eigenvectors $U_{i_j}$ of $L(G)$ associated with eigenvalues $n+\mu_i$, the next $n$ columns are the eigenvectors  $\mathbf{v}_{k+1}, \ldots, \mathbf{v}_{n+k-1}, \mathbf{v}$ of $L(G)$ associated with eigenvalues $n+k$, and the last column is given by the eigenvector $\mathbbm{1}_{n+k}$ of $L(G)$ associated with eigenvalue 0, then $W^TW$ is tridiagonal, and the proof is complete. 
\end{proof} 

\begin{remark}
\label{remark1}
The graph $G = K_{3}^{c} \vee K_{9}$ is simply structured but does not satisfy the conditions of Proposition \ref{heberk>3}. However, $G$ is WHD since we can rewrite it as a sequence of joins: 
$G = H \vee K_6$, where $H=K_3^{c} \vee K_{3}.$ From Proposition \ref{heberk>3}, $H$ is WHD, and consequently $G$ is WHD by Proposition \ref{HjoinKn}.
\end{remark}

The idea in Remark \ref{remark1} can be generalized. Suppose that $G$ has binary sequence $\mathbf{b}=\mathbf{0}^{s_1} \mathbf{1}^{t_1}.$ If $t_1 - s_1 \in \{0,1,2\}$, then $G$ is WHD, from Proposition \ref{heberk>3}. Now, consider that $t_1 - s_1 \geq 3.$ In this case, we can rewrite $G$ as:
\begin{eqnarray}
    K_{s_1}^{c} \vee K_{t_1} &=& K_{s_1}^{c} \vee K_{s_1 + i} \vee K_{t_1-s_1-i}; \nonumber  \\
    &=& K_{s_1}^{c} \vee K_{s_1 + i} \vee K_{2s_1 + i + j} \vee K_{t_1-3s_1-2i-j}; \nonumber \\
    &=& K_{s_1}^{c} \vee K_{s_1 + i} \vee K_{2s_1 + i + j} \vee K_{4s_1+2_i+j+k} \vee K_{t_1-7s_1-4i-2j-k}. \nonumber 
\end{eqnarray}
Note that if 
$$t_1-7s_1-4i-2j-k - (4s_1+2_i+j+k) - (2s_1 + i + j) - (s_1 + i) - s_1 = p$$
where $ 0\le i, j, k, p \le 2,$ then $G$ is WHD from Proposition \ref{HjoinKn}. It implies that 
$$t_1-15s_1 = 8i+4j+2k+p,$$ thus $t_1 - 15 s_1 \in \{0, \ldots, 30\}.$
By applying the ideas above, we generalize this procedure and obtain that if there exists 
$$t_1-(2^l-1)s_1 = m,$$
where $l \in \{1, \ldots, \lfloor\log_2(\frac{n}{s_1})\rfloor \}$ and $m \in \{0,\ldots,2(2^l-1)\},$ then $G$ is WHD. Note that $l$ is the number of times the complete graph $K_{t_1}$ is represented by a sequence of joins, and the upper bound for $l$ is obtained by assuming $m=0.$ 

The next theorem provides sufficient conditions such that a simply structured threshold graph of a given order is WHD.

\begin{theorem}\label{teorema whd}
    Let $G$ be a threshold graph with $\mathbf{b}=\mathbf{0}^{s_1}\mathbf{1}^{t_1}\cdots\mathbf{0}^{s_r}\mathbf{1}^{t_r}$ and let $k\in \{1,\ldots,r\}$. {If $G$ is simply structured and all of the following conditions hold then $G$ is WHD.}
    \begin{enumerate}
        \item[(1) ] for $k=1$ and 
            \begin{itemize}
                \item[(1.1) ] $s_1=1;$ or
                \item[(1.2) ] $s_1=2;$ or
                \item[(1.3) ] $s_1 \ge 3,$ there exists $l \in \{1, \ldots, \lfloor\log_2(\frac{n}{s_1})\rfloor\}$ and $m \in \{0,\ldots,2(2^l-1)\}$ such that $t_1-(2^l-1)s_1 = m;$
            \end{itemize}
        \item[(2) ] for each $k \in \{2,\ldots,r\}$  there exists $l \in \{1, \ldots, \lfloor\log_2(\frac{n}{s_1})\rfloor - 2^{k-1}\}$ and $m \in \{0,\ldots,2(2^l-1)\}$ such that 
        \begin{align}
            t_k - (2^l-1) \left(\sum_{q=1}^{k-1} (s_q + t_q) + s_k \right)  & = m \label{ti}  \quad \text{and} \\
            s_k - (2^l-1) \left(\sum_{q=1}^{k-1} (s_q + t_q) \right)  & = m \label{si}, 
\end{align}   
    \end{enumerate}
\end{theorem}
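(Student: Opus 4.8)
The plan is to reduce the theorem to repeated application of Proposition~\ref{HjoinKn}, using the decomposition of a connected threshold graph as an iterated sequence of joins and disjoint unions given by~\eqref{th1} and~\eqref{th2}. First I would set up the structural picture: since $G$ is simply structured, Theorem~\ref{teorema ss} guarantees the binary sequence $\mathbf{b}=\mathbf{0}^{s_1}\mathbf{1}^{t_1}\cdots\mathbf{0}^{s_r}\mathbf{1}^{t_r}$ satisfies the stated inequalities, and in particular the dimensions of all the eigenspaces are known from Lemma~\ref{specG}. The key observation to record is that $G$ can be written as $G = G' \vee K_{t_r}$ where $G'$ is itself a connected threshold graph on $n - t_r$ vertices (obtained by deleting the last block of dominating vertices), and more generally, peeling off the trailing blocks expresses $G$ as a tower of joins with complete graphs interleaved with the coclique blocks $K_{s_i}^c$. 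The goal is to show each such peeling step lands in the hypothesis of Proposition~\ref{HjoinKn}, i.e.\ the inner factor is WHD on $k'$ vertices and the complete graph being joined has order $n' - k' \in \{0,1,2\}$, \emph{after} first expanding each complete graph $K_{t}$ as a chain of joins $K_{a_1} \vee K_{a_2} \vee \cdots$ as in the displayed computation preceding the theorem.

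Next I would handle the base case, item (1). For $k=1$ the graph under consideration is (the innermost piece) $K_{s_1}^c \vee K_{t_1}$ when $s_1 \ge 2$, or $K_{t_1'}$ (a complete graph, trivially WHD) when $s_1 = 1$. If $s_1 = 2$ this is $K_{t_1+2} - e$, which is WHD by Proposition~\ref{heberk>3} together with the remark that $K_n - e$ is WHD for $n \ge 4$; simple structuredness forces $t_1$ large enough for this. If $s_1 \ge 3$, the condition $t_1 - (2^l-1)s_1 = m$ with $m \le 2(2^l-1)$ is precisely the condition, derived in the paragraph before the theorem, under which $K_{s_1}^c \vee K_{t_1}$ can be rewritten as $K_{s_1}^c \vee K_{s_1} \vee K_{2s_1} \vee \cdots$ (an $l$-fold expansion) with all the ``gaps'' between consecutive orders lying in $\{0,1,2\}$; then $l$ successive applications of Proposition~\ref{HjoinKn}, starting from the WHD graph $K_{s_1}^c \vee K_{s_1}$ (WHD by Proposition~\ref{heberk>3} since the gap is $0$), show $K_{s_1}^c \vee K_{t_1}$ is WHD. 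This establishes that the innermost threshold graph $H_1$ in the tower is WHD.

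Then comes the inductive step, item (2). Suppose $H_{k-1}$, the threshold graph built from the first $k-1$ blocks plus possibly the isolated part, is already WHD on $N_{k-1} := \sum_{q=1}^{k-1}(s_q+t_q)$ vertices. To pass to $H_k = \big((H_{k-1} \sqcup K_{s_k}^c) \vee K_{t_k}\big)$ I would do this in two sub-steps mirroring~\eqref{si} and~\eqref{ti}. First, to handle $H_{k-1} \sqcup K_{s_k}^c \vee K_{t_k}$, rewrite $K_{t_k} = K_{a_1} \vee \cdots \vee K_{a_l}$ and note that $H_{k-1} \vee K_{a_1}$ has the form $H_{k-1} \vee K_{a}$ with $a - N_{k-1} \in \{0,1,2\}$ exactly when~\eqref{si} holds (this is the ``$s_k$'' equation because absorbing the coclique $K_{s_k}^c$ into the tally happens at the join with $K_{t_k}$, not before); apply Proposition~\ref{HjoinKn} once, then iterate $l$ times along the expansion of $K_{t_k}$, each step being a join with a $K_{a_i}$ whose order differs from the running total by at most $2$. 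The second equation~\eqref{ti} controls the analogous bookkeeping that additionally accounts for the $s_k$ coclique vertices becoming part of the ``complete-like'' core after the join (since $(H_{k-1}\sqcup K_{s_k}^c)\vee K_{t_k}$ makes every vertex of $K_{s_k}^c$ adjacent to all of $K_{t_k}$, and $K_{t_k}$ then behaves, for the purpose of the next peeling, as $K_{t_k}$ joined to the rest). I would verify carefully that the eigenvector matrix $W$ produced by concatenating, at each stage, the WHD matrix of the inner factor (padded with zeros), the difference vectors $e_i - e_{i+1}$ for the new complete block, and the single ``boundary'' vector $\mathbf v$ of Proposition~\ref{HjoinKn}, stays weak Hadamard: this is where the $\{0,1,2\}$ gap condition is essential, since it is exactly what keeps $\mathbf v$'s nonzero entries in $\{-1,0,1\}$ and keeps $W^T W$ tridiagonal by the block structure argument in the proof of Proposition~\ref{HjoinKn}.

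The main obstacle I anticipate is bookkeeping, not ideas: one must check that the two numerical conditions~\eqref{ti} and~\eqref{si} for a given $k$ are \emph{jointly} realizable by a single pair $(l,m)$, and that the upper bound $l \le \lfloor\log_2(n/s_1)\rfloor - 2^{k-1}$ leaves enough room for the later blocks (i.e.\ the expansions chosen for different blocks do not collide or overspend the vertex budget). A second, subtler point is confirming that after each join the ``inner factor'' is still \emph{connected} and \emph{threshold}, so that Proposition~\ref{HjoinKn}'s hypothesis ``$H$ is a WHD connected graph'' genuinely applies at every stage — this needs the observation that $H_{k-1} \sqcup K_{s_k}^c$ becomes connected precisely upon joining with $K_{t_k}$, and that simple-structuredness of $G$ (via Theorem~\ref{teorema ss}) forces $t_k \ge 1$ and the dimension inequalities that make the expansion of $K_{t_k}$ long enough to absorb the deficit. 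Assembling these, the global matrix $W$ for $G$ is obtained by nesting the stagewise constructions, and tridiagonality of $W^TW$ follows from the nested block-triangular form, completing the proof.
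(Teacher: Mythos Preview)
Your inductive step has a genuine gap. You propose to pass from the WHD graph $H_{k-1}$ to $H_k = (H_{k-1} \sqcup K_{s_k}^c) \vee K_{t_k}$ via Proposition~\ref{HjoinKn}, but that proposition requires the inner factor to be a \emph{connected} WHD graph, and $H_{k-1} \sqcup K_{s_k}^c$ is disconnected. Your attempted workaround---writing ``$H_{k-1} \vee K_{a_1}$'' while asserting the coclique is ``absorbed into the tally'' later---silently drops the $s_k$ isolated vertices from the join and never explains how they acquire eigenvectors fitting into a weak Hadamard matrix. A version of Proposition~\ref{HjoinKn} for $(H \sqcup K_m^c) \vee K_n$ would have to produce a simply structured basis for an additional eigenspace of dimension $m$ (arising from the $m$ extra zero eigenvalues of $L(H \sqcup K_m^c)$), and your outline supplies no mechanism for this. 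Your final paragraph notices the connectedness issue but only addresses it \emph{after} the join with $K_{t_k}$, which is one step too late.

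The paper sidesteps the disconnectedness problem with an idea your plan is missing. Rather than work with $G$, it passes to the auxiliary threshold graph $G'$ with binary sequence $\mathbf{0}^{s_1}\mathbf{1}^{t_1}\mathbf{1}^{s_2}\mathbf{1}^{t_2}\cdots\mathbf{1}^{s_r}\mathbf{1}^{t_r}$, i.e., every $0$-block after the first is converted to a $1$-block. Then $G' = K_{s_1}^c \vee K_{t_1} \vee K_{s_2} \vee K_{t_2} \vee \cdots \vee K_{t_r}$ is a pure chain of joins with complete graphs, and Proposition~\ref{HjoinKn} applies cleanly at every stage (with each $K_{s_k}$ or $K_{t_k}$ expanded as in the computation following Remark~\ref{remark1}, which is exactly what conditions~\eqref{ti} and~\eqref{si} encode). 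Since $G$ and $G'$ share the Laplacian eigenvectors $\{\mathbf{x}^1,\ldots,\mathbf{x}^n\}$ by Theorem~\ref{TeoPrincipal}, the weak Hadamard matrix produced for $G'$ also diagonalizes $L(G)$. This substitution $G \mapsto G'$ is the key step absent from your plan.
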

\begin{proof}
If $r=1$, then in cases (1.1) and (1.2), the graph $G$ is isomorphic to either $K_{n}$ or $K_{n} - e$, respectively, and both are WHD from Proposition \ref{heberk>3}. Now suppose that (1.3) holds. From Remark \ref{remark1}, $G$ is WHD.

Now assume that $r \geq 2$ and let $k \in \{1,\ldots,r\}.$  The graph $G$ is given by binary sequence $\mathbf{b}=\mathbf{0}^{s_1}\mathbf{1}^{t_1}\cdots\mathbf{0}^{s_r}\mathbf{1}^{t_r}$. Let $G^{\prime}$ be the threshold graph with $\mathbf{b}_{G^{\prime}} = \mathbf{0}^{s_1}\mathbf{1}^{t_1}\mathbf{1}^{s_2}\mathbf{1}^{t_2}\cdots\mathbf{1}^{s_r}\mathbf{1}^{t_r}$. From Theorem \ref{TeoPrincipal}, $G$ and $G^{\prime}$ share the same Laplacian eigenbasis, and since $G$ is simply structured, graph $G^{\prime}$ is also simply structured. Proving that $G^{\prime}$ is WHD implies that $G$ is WHD.
Write $G^{\prime} = ((((K_{s_1}^{c} \vee K_{t_1}) \vee K_{s_2}) \vee K_{t_2}) \cdots \vee K_{s_r}) \vee K_{t_r}$. By hyphothesis (1), we have that   $G^{\prime}_{1} = K_{s_1}^{c} \vee K_{t_1}$ is WHD. From hypothesis (2) and Proposition \ref{HjoinKn}, 
$G^{\prime}_{2} = G^{\prime}_{1} \vee K_{s_2}$ is also WHD. Applying this same procedure, note that  
$G^{\prime}_{i}$ graphs are WHD for $i = 3,\ldots,2r-2,$ and in particular $G^{\prime} = G^{\prime}_{2r-1} = G^{\prime}_{2r-2} \vee K_{t_r}$ is WHD. Therefore, $G^{\prime}$ is WHD. 
\end{proof}

\section{Pair state transfer on threshold graphs}
\label{sec:5}

Let $G$ be a connected graph on $n$ vertices. The {\sl continuous-time quantum walk} on $G$ with Hamiltonian $L(G)$ is determined by a one-parameter family of unitary matrices given by
\begin{equation*}
U(t)=e^{it L(G)},\quad t\in\mathbb{R}.
\end{equation*}
The matrix $U(t)$ governs the propagation of quantum states in $G$ under Laplacian dynamics. A pure (quantum) state on $G$ is a unit vector in $\mathbb{C}^n$ \cite{godsil2025perfect}. In particular, a pure state represented by a standard basis vector is called a \textit{vertex state}, while that of the form $\frac{1}{\sqrt{2}}(\mathbf{e}_a-\mathbf{e}_b)$ with $a\neq b$ is called a \textit{pair state}. 

For $\mu\in \spec(G)$, $E_{\mu}$ denotes the orthogonal projection matrix onto the eigenspace associated with $\mu$. The \textit{eigenvalue support} of pure state $\mathbf{u}$ is the set
\begin{center}
$\supp_{\mathbf{u}}=\{\mu\in \spec(G):E_\mu\mathbf{u}\neq\mathbf{0}\}$. 
\end{center}
For a Laplacian integral graph, it is immediate that $\supp_{\mathbf{u}}$ is an integral set for any pure state $\mathbf{u}$. We also note that if $\mathbf{u}$ is a pair state, then $0\notin \supp_{\mathbf{u}}$. 

Let $\mathbf{u},\mathbf{v}\in\mathbb{R}^n$ be real pure states on $G$ that are linearly independent. We say that {\sl perfect state transfer} (PST for short) occurs between $\mathbf{u}$ and $\mathbf{v}$ at time $\tau>0$ if
there exists $\eta\in\mathbb{C}$ called {\sl phase factor}, such that
\begin{equation}
\label{pstper}
U(\tau) \mathbf{u} = \eta \mathbf{v}.
\end{equation}
The minimum such $\tau$ is called the \textit{minimum PST time}. If $\mathbf{u}$ and $\mathbf{v}$ in (\ref{pstper}) are linearly dependent, then we say that $\mathbf{u}$ is \textit{periodic}. We say that $\mathbf{u}$ and $\mathbf{v}$ are \textit{strongly cospectral} if $E_\mu\mathbf{u}=E_\mu\mathbf{v}$ for each $\mu\in\supp_{\mathbf{u}}$, in which case, we may partition $\supp_{\mathbf{u}}$ into two sets given by
\begin{center}
$\supp_{\mathbf{u},\mathbf{v}}^+=\{\mu\in \supp_{\mathbf{u}}:E_\mu\mathbf{u}=E_\mu\mathbf{v}\}\quad $ and $\quad \supp_{\mathbf{u},\mathbf{v}}^+=\{\mu\in \supp_{\mathbf{u}}:E_\mu\mathbf{u}=-E_\mu\mathbf{v}\}$.
\end{center}

\begin{remark}
\label{remSC}
If $\mathbf{w}$ is an eigenvector for $L(G)$ associated with $\mu\in\supp_{\mathbf{u}}$, then we have $E_\mu\mathbf{w}=\mathbf{w}$, and so $\mathbf{w}^TE_\mu\mathbf{u}=\mathbf{w}^T\mathbf{u}$. Consequently, $E_\mu\mathbf{u}=\pm E_\mu\mathbf{v}$ holds if and only if $\mathbf{w}^T\mathbf{u}=\pm \mathbf{w}^T\mathbf{v}\neq 0$ for every eigenvector $\mathbf{w}$ associated with $\mu$.
\end{remark}

A pure state $\mathbf{u}$ is \textit{fixed} if $|\supp_{\mathbf{u}}|=1$, and this happens if and only if $\mathbf{u}$ is an eigenvector for $L(G)$ \cite[Proposition 2.3]{godsil2025perfect}. The involvement of $\mathbf{u}$ in strong cospectrality requires that $|\supp_{\mathbf{u}}|\geq 2$ \cite[Section 4]{godsil2025perfect}. Thus fixed states are not involved in strong cospectrality (and by extension, PST). In particular, the pair state $\frac{1}{\sqrt{2}}(\mathbf{e}_a-\mathbf{e}_b)$ is fixed whenever $a$ and $b$ are twins by \cite[Lemma 2.9]{Monterde2022}. 

Most work on PST deals with vertex states, also called vertex state transfer, see \cite{kirkland2026quantum,MONTERDE2025} for recent work. 
However, due to the rarity of vertex state transfer \cite{godsil2012state}, there is a growing body of work on PST involving other forms of pure states \cite{chen2020pair,godsil2025quantum,kim2024generalization}. In this section, we characterize PST between pair states, also called \textit{pair state transfer}, on threshold graphs. To do this, we first provide a characterization of pair state transfer on Laplacian integral graphs. In what follows, $\nu_2(m)$ denotes the the largest power of two dividing an integer $m$.

\begin{theorem}
\label{pst}
Let $\mathbf{u}=\frac{1}{\sqrt2}(\mathbf{e}_a-\mathbf{e}_b)$ and $\mathbf{v}=\frac{1}{\sqrt2}(\mathbf{e}_c-\mathbf{e}_d)$ be pair states in $G$.
\begin{enumerate}
\item If $|\supp_{\mathbf{u}}|=2$, then perfect state transfer occurs between $\mathbf{u}$ and $\mathbf{v}$ if and only if they are strongly cospectral. 
\item If $G$ is Laplacian integral and $|\supp_{\mathbf{u}}|\geq 3$, then perfect state transfer occurs between $\mathbf{u}$ and $\mathbf{v}$ if and only if both conditions below hold.
\begin{enumerate}
\item The pair states $\mathbf{u}$ and $\mathbf{v}$ are strongly cospectral.
\item For all $\sigma,\mu\in\supp_{\mathbf{u},\mathbf{v}}^+$ and $\lambda,\gamma\in\supp_{\mathbf{u},\mathbf{v}}^-$, we have
\begin{equation*}
\nu_2(\sigma-\mu)>\nu_2(\lambda-\mu)=\nu_2(\gamma-\mu).
\end{equation*}
\end{enumerate}
\end{enumerate}
In both cases, for a fixed $\mu\in\supp_{\mathbf{u},\mathbf{v}}^+$, the minimum PST time is $\tau=\frac{\pi}{g}$ where $g=\operatorname{gcd}\{\mu-\alpha:\alpha\in \supp_{\mathbf{u}}\}$.
\end{theorem}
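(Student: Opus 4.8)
The plan is to reduce everything to the spectral decomposition $U(t)=\sum_{\mu\in\supp_{\mathbf u}}e^{it\mu}E_\mu$ applied to $\mathbf u$, since only eigenvalues in the support matter. Writing $U(\tau)\mathbf u=\sum_{\mu\in\supp_{\mathbf u}}e^{i\tau\mu}E_\mu\mathbf u$, the identity $U(\tau)\mathbf u=\eta\mathbf v$ forces, for every $\mu\in\supp_{\mathbf u}$, the relation $e^{i\tau\mu}E_\mu\mathbf u=\eta E_\mu\mathbf v$; taking norms and using that $E_\mu\mathbf u\neq 0$ gives $\|E_\mu\mathbf u\|=\|E_\mu\mathbf v\|$ and hence $\supp_{\mathbf u}=\supp_{\mathbf v}$, and moreover $E_\mu\mathbf v=\zeta_\mu E_\mu\mathbf u$ for a unimodular scalar $\zeta_\mu$. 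For real states, invoking Remark~\ref{remSC} (testing against eigenvectors $\mathbf w$), the inner products $\mathbf w^T\mathbf u,\mathbf w^T\mathbf v$ are real, so $\zeta_\mu=\pm1$, i.e. $\mathbf u$ and $\mathbf v$ are strongly cospectral; this is the content of condition~(a) and it is forced in both parts. Fixing a reference eigenvalue $\mu_0\in\supp^+_{\mathbf u,\mathbf v}$ (nonempty, since $\mathbf u$ is not fixed and PST between distinct states cannot send $\mathbf u\mapsto -\mathbf u$ with all signs negative unless $\supp^-$ is empty — handle the trivial sign conventions here), the PST condition becomes: there is $\eta$ with $e^{i\tau(\alpha-\mu_0)}=\eta/\eta=1$... more precisely $e^{i\tau\alpha}=\eta\varepsilon_\alpha$ where $\varepsilon_\alpha=+1$ on $\supp^+$ and $-1$ on $\supp^-$, so $e^{i\tau(\alpha-\mu_0)}=\varepsilon_\alpha$ for all $\alpha\in\supp_{\mathbf u}$.

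For part~(1), when $|\supp_{\mathbf u}|=2$, say $\supp_{\mathbf u}=\{\mu_0,\lambda\}$: if $\mathbf u,\mathbf v$ are strongly cospectral we must decide whether $\lambda\in\supp^+$ or $\supp^-$. If $\lambda\in\supp^-$, then $\tau=\pi/(\mu_0-\lambda)$ (taking absolute value) works since $e^{i\tau(\mu_0-\lambda)}=e^{\pm i\pi}=-1$; if $\lambda\in\supp^+$ then $e^{i\tau(\lambda-\mu_0)}=1$ would need $\tau(\lambda-\mu_0)\in 2\pi\mathbb Z$, giving $U(\tau)\mathbf u=\eta\mathbf u$, i.e. periodicity of $\mathbf u$, not transfer to a linearly independent $\mathbf v$ — but here one must be careful: $\mathbf v$ strongly cospectral with $\mathbf u$ and $\supp^-$ empty forces $E_\mu\mathbf v=E_\mu\mathbf u$ for all $\mu\in\supp_{\mathbf u}$, hence $\mathbf v=\sum_\mu E_\mu\mathbf v=\sum_\mu E_\mu\mathbf u=\mathbf u$ (the components outside the support vanish for both pair states summing to the same thing), contradicting linear independence. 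So in the two-eigenvalue case strong cospectrality with $\mathbf u\neq\pm\mathbf v$ automatically puts $\lambda\in\supp^-$ and PST holds; conversely PST implies strong cospectrality by the first paragraph. The minimum-time claim $\tau=\pi/g$ with $g=\gcd\{\mu_0-\alpha\}$ follows because the system $e^{i\tau(\mu_0-\alpha)}=1$ on $\supp^+$, $=-1$ on $\supp^-$ has a solution iff all the relevant quantities behave correctly mod the gcd, and the least such $\tau$ is $\pi/g$ — I would derive this from a short lemma on when a finite system $\{x\delta_j\equiv c_j\bmod 2\pi\}$ is solvable, $\delta_j,c_j$ integers times $\pi$ essentially.

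For part~(2) with $|\supp_{\mathbf u}|\geq 3$ and $G$ Laplacian integral, after reducing to the system $e^{i\tau(\alpha-\mu_0)}=\varepsilon_\alpha$ I would set $\tau=\pi s/g$ or more naturally write $\tau=\pi/g\cdot$(unit) and analyze $2$-adic valuations. The key equivalence: a simultaneous solution $\tau$ exists iff, writing each difference $\alpha-\mu_0$ as $2^{\nu_2(\alpha-\mu_0)}\cdot(\text{odd})$, the parities line up — concretely $e^{i\tau(\sigma-\mu_0)}=1$ for $\sigma\in\supp^+$ and $e^{i\tau(\lambda-\mu_0)}=-1$ for $\lambda\in\supp^-$ with a common $\tau$. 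Taking $\tau=\pi/g$ where $g=\gcd\{\alpha-\mu_0\}$, one needs $(\sigma-\mu_0)/g$ even for all $\sigma\in\supp^+$ and $(\lambda-\mu_0)/g$ odd for all $\lambda\in\supp^-$; since $g$ divides every difference and $\supp^-$ is nonempty (else $\mathbf v=\mathbf u$ as above — though wait, if $\supp^+$ has $\geq 2$ elements and $\supp^-=\emptyset$ we again get $\mathbf v=\mathbf u$), at least one $(\lambda-\mu_0)/g$ is odd, and $g=\gcd$ forces it. The condition that \emph{all} $(\lambda-\mu_0)/g$ are odd and \emph{all} $(\sigma-\mu_0)/g$ are even translates, via $\nu_2((\lambda-\mu_0)/g)=0$ and $\nu_2((\sigma-\mu_0)/g)\geq 1$, into the stated inequalities $\nu_2(\sigma-\mu)>\nu_2(\lambda-\mu)=\nu_2(\gamma-\mu)$ for all $\sigma,\mu\in\supp^+$, $\lambda,\gamma\in\supp^-$ — here one uses the ultrametric property of $\nu_2$ and $\nu_2(\lambda-\mu)=\nu_2((\lambda-\mu_0)-(\sigma-\mu_0)+\dots)$ type manipulations to pass between "difference from the reference $\mu_0$" and "difference between two arbitrary support elements," which is the one genuinely fiddly point. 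The main obstacle I anticipate is precisely this bookkeeping: verifying that the condition "$\nu_2(\alpha-\mu_0)$ constant on $\supp^-$ and strictly larger on $\supp^+$" is equivalent to the reference-independent formulation in~(b), and checking it is independent of which $\mu_0\in\supp^+$ is chosen; the ultrametric inequality $\nu_2(x+y)\geq\min(\nu_2(x),\nu_2(y))$ with equality when valuations differ is the tool, applied to $\sigma-\lambda=(\sigma-\mu_0)-(\lambda-\mu_0)$ and similar. Once that is settled, minimality of $\tau=\pi/g$ is immediate since any valid $\tau'$ must satisfy $e^{i\tau'(\lambda-\mu_0)}=-1$, forcing $\tau'(\lambda-\mu_0)\in\pi+2\pi\mathbb Z$ for the $\lambda$ with $(\lambda-\mu_0)/g$ odd, whence $\tau'\geq\pi/|\lambda-\mu_0|\cdot$(odd)$\geq$... actually $\tau'g/\pi\in\mathbb Z_{\text{odd}}$ after combining all constraints, so $\tau'\geq\pi/g$.
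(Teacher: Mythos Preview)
Your proposal is essentially a correct, self-contained proof from first principles: you reduce PST to the projector-by-projector identity $e^{i\tau\mu}E_\mu\mathbf{u}=\eta E_\mu\mathbf{v}$, deduce strong cospectrality from reality of the states, and then analyze the resulting system $e^{i\tau(\alpha-\mu_0)}=\varepsilon_\alpha$ with $\varepsilon_\alpha\in\{\pm1\}$. The $2$-adic bookkeeping you flag as ``fiddly'' does work via the ultrametric inequality, and your observation that $\supp^{-}_{\mathbf{u},\mathbf{v}}=\varnothing$ forces $\mathbf{v}=\mathbf{u}$ (and $\supp^{+}_{\mathbf{u},\mathbf{v}}=\varnothing$ forces $\mathbf{v}=-\mathbf{u}$) is exactly what is needed to rule out the degenerate sign configurations. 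The minimum-time argument via $tg=m$ odd, hence $\tau=\pi/g$ at $m=1$, is also sound.

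The paper, by contrast, does not prove this theorem directly at all: its proof consists of a single sentence citing Theorem~5.2(1) and Corollary~5.7 of \cite{godsil2025perfect}. So your route is genuinely different in that you supply the content of those cited results rather than invoke them. What the paper's approach buys is brevity and alignment with the existing literature on pure-state transfer; what yours buys is self-containment and an explicit display of why the $\nu_2$ condition is precisely the obstruction, which is pedagogically useful. If you polish the informal passages (in particular the sentence beginning ``nonempty, since $\mathbf{u}$ is not fixed\ldots'' and the handling of the non-integral case in part~1, where $g$ is simply $|\mu_0-\lambda|$), your argument stands on its own.
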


\begin{proof}
Statements 1 and 2 are immediate from Theorem 5.2(1) and Corollary 5.7 in \cite{godsil2025perfect}, respectively.
\end{proof}

Theorem \ref{pst} applies to threshold graphs since they are Laplacian integral. In order to characterize pair state transfer in this family, we first characterize strong cospectrality between pair states using Theorem \ref{pst}(1-2). From (\ref{th1})-(\ref{th2}), a connected threshold graph $G$ with $\mathbf{0}^{s_1}\mathbf{1}^{t_1}\cdots \mathbf{0}^{s_r}\mathbf{1}^{t_r}$ may be written as
\begin{equation}
\label{th3}
  G=\begin{cases}
    (((((K_{t_1'}\sqcup K^c_{s_2})\vee K_{t_2})\sqcup K^c_{s_3})\cdots)\sqcup K^c_{s_r})\vee K_{t_{r}} & \text{if $s_1=1$},\\
    (((((K^c_{s_1}\vee K_{t_1})\sqcup K^c_{s_2})\vee K_{t_2})\cdots)\sqcup K^c_{s_r})\vee K_{t_{r}} & \text{if $s_1\geq 2$}
  \end{cases}
\end{equation}
where $t_1'=t_1+1$.

\begin{lema}
\label{sc}
Let $G$ be a connected threshold graph with $\mathbf{b}=\mathbf{0}^{s_1}\mathbf{1}^{t_1}\cdots \mathbf{0}^{s_r}\mathbf{1}^{t_r}$. The pair states $\mathbf{u}$ and $\mathbf{v}$ are strongly cospectral in $G$
if and only if the following conditions all hold.
\begin{enumerate}
\item $\mathbf{u}=\frac{1}{\sqrt2}(\mathbf{e}_1-\mathbf{e}_b)$ and $\mathbf{v}=\frac{1}{\sqrt2}(\mathbf{e}_2-\mathbf{e}_b)$ for any $b\geq 3$.
\item Either (i) $s_1=t_1=1$, or (ii) $s_1=2$.
\end{enumerate}
Moreover, if $\mathbf{u}$ and $\mathbf{v}$ are strongly cospectral, then $\supp_{\mathbf{u},\mathbf{v}}^+=\spec(G)\backslash\{0,\theta\} $ and $ \supp_{\mathbf{u},\mathbf{v}}^-=\{\theta\}$, where
\begin{equation}
\label{theta}
\theta=
  \begin{cases}
    2+\sum_{j=2}^rt_j & \text{if $s_1=t_1=1$},\\
    \sum_{j=2}^rt_j & \text{if $s_1=2$}.
  \end{cases}
\end{equation}
\end{lema}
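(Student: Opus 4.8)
The plan is to exploit Theorem~\ref{TeoPrincipal}, which gives the explicit eigenbasis $\{\mathbf{x}^1,\ldots,\mathbf{x}^n\}$ shared by every connected threshold graph on $n$ vertices, together with Remark~\ref{remSC}, which reduces strong cospectrality of $\mathbf{u}=\frac{1}{\sqrt2}(\mathbf{e}_a-\mathbf{e}_b)$ and $\mathbf{v}=\frac{1}{\sqrt2}(\mathbf{e}_c-\mathbf{e}_d)$ to the condition that for every eigenvector $\mathbf{w}$, $\mathbf{w}^T\mathbf{u}=\pm\mathbf{w}^T\mathbf{v}$, with the sign constant on each eigenspace, and moreover $\mathbf{w}^T\mathbf{u}\neq 0$ whenever $\mu\in\supp_{\mathbf{u}}$. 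Writing things out, $\mathbf{w}^T(\mathbf{e}_a-\mathbf{e}_b)=w_a-w_b$, so the whole question becomes: for which pairs $\{a,b\}$, $\{c,d\}$ do the differences $x^l_a-x^l_b$ and $x^l_c-x^l_d$ agree up to a global sign (depending only on the eigenvalue of $\mathbf{x}^l$) for all $l$? Since the entries of $\mathbf{x}^l$ are $1$ for coordinates $\le l$, the value $-l$ at coordinate $l+1$, and $0$ afterwards, these differences are extremely rigid, and I expect to be able to pin down the admissible index pairs by a direct case analysis on where $a,b$ and $l$ sit relative to one another.

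First I would handle the ``pair state fixed'' obstruction: by the remark after Theorem~\ref{pst} (citing \cite[Lemma 2.9]{Monterde2022}), if $a,b$ are twins then $\mathbf{u}$ is fixed, hence not strongly cospectral with anything, so I may assume $a,b$ are non-twins, and likewise $c,d$. In a threshold graph two vertices are twins iff they lie in the same cell of the degree partition, so non-twin pairs must straddle two distinct cells. Next, I would use the eigenbasis to compute $\supp_{\mathbf{u}}$ for a pair state: $\mu_l\in\supp_{\mathbf{u}}$ iff $x^l_a\neq x^l_b$, and one checks quickly that the pair state $\frac{1}{\sqrt2}(\mathbf{e}_a-\mathbf{e}_b)$ has $0\notin\supp_{\mathbf{u}}$ always. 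The key computational step is to show that strong cospectrality forces $\{a,b\}$ and $\{c,d\}$ to share a common vertex and that this common vertex must be a ``large-index'' vertex $b\ge 3$ while the two distinguished vertices are coordinates $1$ and $2$; here the point is that $\mathbf{x}^1=\mathbf{e}_1-\mathbf{e}_2$ is itself a $\{-1,0,1\}$-eigenvector (eigenvalue the relevant one), and being strongly cospectral with a pair state that is a difference of standard basis vectors essentially forces the symmetric difference $\{a,b\}\triangle\{c,d\}$ to be $\{1,2\}$. I would then verify directly, using \eqref{spectrum threshold} and \eqref{degrees}, that vertices $1$ and $2$ fail to be twins precisely when either $s_1=t_1=1$ (so vertex $1$ is the isolated vertex and vertex $2$ is a dominating vertex, which are then non-adjacent-type distinct) or $s_1=2$ (so vertices $1$ and $2$ are both in $U_1$ but — wait, they would be twins; so in this case one must instead take $a=1$, $c=2$ with $b$ ranging and recheck the sign condition), adjusting the casework so that the stated two alternatives (i)–(ii) emerge as exactly the cases where the sign pattern is consistent across all eigenspaces.

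For the ``moreover'' part, once $\mathbf{u}=\frac{1}{\sqrt2}(\mathbf{e}_1-\mathbf{e}_b)$ and $\mathbf{v}=\frac{1}{\sqrt2}(\mathbf{e}_2-\mathbf{e}_b)$ are fixed, I would compute $\mathbf{w}^T\mathbf{u}$ and $\mathbf{w}^T\mathbf{v}$ for eigenvectors $\mathbf{w}$ in each eigenspace listed in Lemma~\ref{specG}. The vectors $\mathbf{x}^l$ with $l\ge 2$ all have equal first and second coordinates, so $x^l_1-x^l_b=x^l_2-x^l_b$ and those eigenvalues lie in $\supp^+_{\mathbf{u},\mathbf{v}}$; the only eigenvector distinguishing coordinates $1$ and $2$ is (a combination involving) $\mathbf{x}^1=\mathbf{e}_1-\mathbf{e}_2$, which lies in the eigenspace of a single eigenvalue $\theta$, forcing $\supp^-_{\mathbf{u},\mathbf{v}}=\{\theta\}$. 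Identifying $\theta$ is then a matter of reading off, from \eqref{spectrum threshold}–\eqref{degrees} (equivalently from Lemma~\ref{specG}), which eigenvalue has $\mathbf{x}^1$ as an eigenvector: in case $s_1=t_1=1$ vertex $2$ is a dominating vertex of degree contributing $2+\sum_{j\ge2}t_j$, and in case $s_1=2$ the relevant eigenvalue is $\sum_{j\ge2}t_j$, giving \eqref{theta}. The main obstacle I anticipate is the bookkeeping in the forward direction: ruling out \emph{all} other index pairs $\{a,b\},\{c,d\}$ requires carefully showing that any deviation from ``symmetric difference $=\{1,2\}$, common vertex $\ge 3$'' produces some $\mathbf{x}^l$ for which $x^l_a-x^l_b$ and $x^l_c-x^l_d$ are nonzero but not equal in absolute value, or have inconsistent signs across the eigenspace — a finite but somewhat delicate case split driven entirely by the staircase structure of the $\mathbf{x}^l$.
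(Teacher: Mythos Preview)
Your overall strategy --- use the explicit eigenbasis $\{\mathbf{x}^1,\ldots,\mathbf{x}^n\}$ from Theorem~\ref{TeoPrincipal} together with Remark~\ref{remSC}, and reduce strong cospectrality to comparing $x^l_a-x^l_b$ with $x^l_c-x^l_d$ --- is exactly the paper's approach, and your treatment of condition~1 and of the ``moreover'' clause is essentially correct.

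The gap is in your derivation of condition~2. The ``vertices $1$ and $2$ fail to be twins'' heuristic is simply wrong, as you yourself begin to notice: in \emph{every} connected threshold graph vertices $1$ and $2$ are twins (adjacent twins when $s_1=1$, non-adjacent twins when $s_1\ge 2$), so twinness cannot be what singles out (i)--(ii). Condition~2 is instead a \emph{multiplicity} condition on the eigenvalue $\theta$ to which $\mathbf{x}^1$ belongs. The eigenspace of $\theta$ is spanned by $\{\mathbf{x}^1,\ldots,\mathbf{x}^{t_1}\}$ when $s_1=1$ and by $\{\mathbf{x}^1,\ldots,\mathbf{x}^{s_1-1}\}$ when $s_1\ge 2$. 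Your own ``moreover'' computation already shows that $(\mathbf{x}^1)^T\mathbf{u}=-(\mathbf{x}^1)^T\mathbf{v}\neq 0$ while $(\mathbf{x}^l)^T\mathbf{u}=+(\mathbf{x}^l)^T\mathbf{v}$ for every $l\ge 2$; hence if $\mathcal{E}_L(\theta)$ contains any $\mathbf{x}^l$ with $l\ge 2$ the sign on that eigenspace is inconsistent and strong cospectrality fails. Condition~2 is precisely the requirement that $\mathcal{E}_L(\theta)=\mathrm{span}\{\mathbf{x}^1\}$ is one-dimensional: $t_1=1$ in the first case, $s_1-1=1$ in the second. The paper reaches the same conclusion by writing $E_\theta=(I_k-\tfrac{1}{k}J_k)\oplus O$ with $k=t_1+1$ or $k=s_1$ and checking directly that $E_\theta\mathbf{u}=-E_\theta\mathbf{v}$ forces $k=2$. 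Once you replace the twin argument with this multiplicity observation --- which is already implicit in your sign analysis --- your proof goes through along the same lines as the paper's.
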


\begin{proof}
Let $G$ be a connected threshold graph. Assume $\mathbf{u}=\frac{1}{\sqrt2}(\mathbf{e}_a-\mathbf{e}_b)$ and $\mathbf{v}=\frac{1}{\sqrt2}(\mathbf{e}_c-\mathbf{e}_d)$ are strongly cospectral in $G$. By Theorem \ref{TeoPrincipal}, $\{\mathbf{x}^{1}, \ldots, \mathbf{x}^{n}\}$ is an eigenbasis for $L(G)$. We divide the proof into two cases. First, suppose $b\neq d$. Without loss of generality, let $b<d$ so that $d\geq 2$. Then 
\[
  (\mathbf{x}^{d})^T\mathbf{u}= \frac{1}{\sqrt{2}}(\mathbf{x}^{d}_a-\mathbf{x}^{d}_b)=
  \begin{cases}
    \frac{1}{\sqrt{2}}(1-1)=0 & \text{if $a<d$}\\
    %\frac{1}{\sqrt2}(-d-1) & \text{if $a=d$}\\
    \frac{1}{\sqrt2}(0-1) & \text{if $a>d$}
  \end{cases}
\]
while 
\[
  (\mathbf{x}^{d})^T\mathbf{v}= \frac{1}{\sqrt{2}}(\mathbf{x}^{d}_c-\mathbf{x}^{d}_d)=
  \begin{cases}
    \frac{1}{\sqrt{2}}(1-(-d)) & \text{if $c<d$}\\
    \frac{1}{\sqrt2}(0-(-d)) & \text{if $c>d$}.
  \end{cases}
\]
Thus, if $a\neq d$, then $(\mathbf{x}^{d})^T\mathbf{u}\neq\pm (\mathbf{x}^{d})^T\mathbf{v}$. Making use of Remark \ref{remSC}, we get that $\mathbf{u}$ and $\mathbf{v}$ are not strongly cospectral, a contradiction. % and so PST does not occur between them by Theorem \ref{pst}(1). 
If $a=d$, then $\mathbf{u}=-\frac{1}{\sqrt2}(\mathbf{e}_b-\mathbf{e}_d)$ and $\mathbf{v}=\frac{1}{\sqrt2}(\mathbf{e}_c-\mathbf{e}_d)$ which will be covered by the next case.

Next, let $b=d$ so that $\mathbf{u}=\frac{1}{\sqrt2}(\mathbf{e}_a-\mathbf{e}_b)$ and $\mathbf{v}=\frac{1}{\sqrt2}(\mathbf{e}_c-\mathbf{e}_b)$ with $a\neq c$. %Since $\mathbf{u}$ and $\mathbf{v}$ are required to be linearly independent for strong cospectrality, we have that $a\neq c$. 
If $a\geq 3$, then $(\mathbf{x}^{a})^T\mathbf{v}= \frac{1}{\sqrt{2}}(\mathbf{x}^{a}_c-\mathbf{x}^{a}_b)\in\{0,\pm 1\}$ whenever $a\neq b,c$, while
\[
  (\mathbf{x}^{a})^T\mathbf{u}= \frac{1}{\sqrt{2}}(\mathbf{x}^{a}_a-\mathbf{x}^{a}_b)=
  \begin{cases}
    \frac{1}{\sqrt{2}}(-a-1) & \text{if $a<b$}\\
    \frac{1}{\sqrt2}(-a-0) & \text{if $a>b$}.
  \end{cases}
\]
In any case, we get $(\mathbf{x}^{a})^T\mathbf{u}\neq\pm (\mathbf{x}^{a})^T\mathbf{v}$. Thus, $\mathbf{u}$ and $\mathbf{v}$ are not strongly cospectral, a contradiction. Therefore, $a\leq 2$, and by symmetry, $c\leq 2$. This implies that $a=1$ and $c=2$, and so $\mathbf{u}=\frac{1}{\sqrt2}(\mathbf{e}_1-\mathbf{e}_b)$ and $\mathbf{v}=\frac{1}{\sqrt2}(\mathbf{e}_2-\mathbf{e}_b)$ for any $b\geq 3$. This proves that 1 is a necessary condition for strong cospectrality between $\mathbf{u}$ and $\mathbf{v}$. To prove the above result, we now show that if $\mathbf{u}=\frac{1}{\sqrt2}(\mathbf{e}_1-\mathbf{e}_b)$ and $\mathbf{v}=\frac{1}{\sqrt2}(\mathbf{e}_2-\mathbf{e}_b)$, then $\mathbf{u}$ and $\mathbf{v}$ are strongly cospectral for any $b\geq 3$ if and only if 2 holds. In this case, $\theta\in\spec(G)$ where
\[
  \theta=
  \begin{cases}
    1+t_1+\sum_{j=2}^rt_j & \text{if $s_1=1$},\\
    \sum_{j=2}^rt_j & \text{if $s_1\geq 2$}
  \end{cases}
\]
with multiplicity $m_\theta=t_1$ if $s_1=1$ and $m_\theta=s_1-1$ otherwise. Since $\{1,\ldots,t_1+1\}$ and $\{1,\ldots,s_1\}$ are sets of pairwise twins whenever $s_1=1$ and $s_1\geq 2$, the vectors $\{\mathbf{x}^{1},\ldots,\mathbf{x}^{t_1}\}$ and $\{\mathbf{x}^{1},\ldots,\mathbf{x}^{s_1-1}\}$ form a basis of eigenvectors for the eigenspace of $\theta$, respectively. Thus, if $A\oplus B$ denotes the direct sum of matrices $A$ and $B$, then orthonormalizing these two bases yields
\[
  E_\theta=
  \begin{cases}
    \big(I_{t_1+1}-\frac{1}{t_1+1}J_{t_1+1}\big)\oplus O & \text{if $s_1=1$},\\
    \big(I_{s_1}-\frac{1}{s_1}J_{s_1}\big)\oplus O & \text{if $s_1\geq 2$},
  \end{cases}
\]
where $J_m$ is the all-ones square matrix of order $m$. Now, suppose $s_1=1$. Then 
\begin{center}
$E_\theta\mathbf{u}=\frac{1}{\sqrt2}\big(\mathbf{e}_1-\frac{1}{t_1+1}\mathbbm{1}_{t_1+1}\big)\quad $ and $\quad E_\theta\mathbf{v}=\frac{1}{\sqrt2}\big(\mathbf{e}_2-\frac{1}{t_1+1}\mathbbm{1}_{t_1+1}\big)$.
\end{center}
From these equations, we see that $E_\theta\mathbf{u}\neq E_\theta\mathbf{v}$. Meanwhile, $E_\theta\mathbf{u}=- E_\theta\mathbf{v}$ if and only if $t_1=1$. In this case $\theta=2+\sum_{j=2}^rt_j$ is a simple eigenvalue of $L(G)$ with associated eigenvector $\mathbf{x}^{1}$, and $E_\mu\mathbf{u}=E_\mu\mathbf{v}$ for all $\mu\in\spec(G)\backslash\{0,\theta\} $. This proves that $\supp_{\mathbf{u},\mathbf{v}}^+=\spec(G)\backslash\{0,\theta\} $ and $ \supp_{\mathbf{u},\mathbf{v}}^-=\{\theta\}$. Similarly, if $s_1\geq 2$, then $E_\theta\mathbf{u}=- E_\theta\mathbf{v}$ if and only if $s_1=2$. In this case, $\theta=\sum_{j=2}^rt_j$, $\supp_{\mathbf{u},\mathbf{v}}^+=\spec(G)\backslash\{0,\theta\}$ and $ \supp_{\mathbf{u},\mathbf{v}}^-=\{\theta\}$. Thus, $\mathbf{u}=\frac{1}{\sqrt2}(\mathbf{e}_1-\mathbf{e}_b)$ and $\mathbf{v}=\frac{1}{\sqrt2}(\mathbf{e}_2-\mathbf{e}_b)$ are strongly cospectral for any $b\geq 3$ if and only if 2 holds, in which case $\supp_{\mathbf{u},\mathbf{v}}^+=\spec(G)\backslash\{0,\theta\} $ and $ \supp_{\mathbf{u},\mathbf{v}}^-=\{\theta\}$.
\end{proof}

\begin{remark}
\label{r1}
It is evident from Lemma \ref{specG} that a connected threshold graph with $\mathbf{b}=\mathbf{0}^{s_1}\mathbf{1}^{t_1}\cdots \mathbf{0}^{s_r}\mathbf{1}^{t_r}$ has exactly $2r$ distinct eigenvalues whenever $s_1=1$ and $2r+1$ whenever $s_1\geq 2$. Let $\mathbf{u}=\frac{1}{\sqrt2}(\mathbf{e}_1-\mathbf{e}_b)$ and $\mathbf{v}=\frac{1}{\sqrt2}(\mathbf{e}_2-\mathbf{e}_b)$ for some $b\geq 3$. Since 0 does not belong to the eigenvalue support of a pair state, the only time get $|\supp_{\mathbf{u}}|=2$ is when % $r=2$ whenever $s_1=1$, and 
$r=1$ and $s_1\geq 2$ (recall that $\mathbf{u}$ is fixed whenever $r=1$ and $s_1=1$). In this case, Theorem \ref{pst}(1) implies that $\mathbf{u}$ and $\mathbf{v}$ admit PST in $G$ if and only if the two conditions in Lemma \ref{sc} hold. 
\end{remark}

For the case when $|\supp_{\mathbf{u}}|\geq 3$ (i.e., $r\geq 2$), we have the following result.

\begin{theorem}
\label{pairthresh}
Let $G$ be a connected threshold graph with $\mathbf{b}=\mathbf{0}^{s_1}\mathbf{1}^{t_1}\cdots \mathbf{0}^{s_r}\mathbf{1}^{t_r}$ where $r\geq 2$. Then pair state transfer occurs between $\mathbf{u}$ and $\mathbf{v}$ in $G$ if and only if the following conditions all hold.
\begin{enumerate}
\item $\mathbf{u}=\frac{1}{\sqrt2}(\mathbf{e}_1-\mathbf{e}_b)$ and $\mathbf{v}=\frac{1}{\sqrt2}(\mathbf{e}_2-\mathbf{e}_b)$ for any $b\geq 3$.
\item One of the following conditions is satisfied.
\begin{enumerate}
\item $s_1=t_1=1$, $s_2\equiv 2$ (mod 4), $s_j\equiv 0$ (mod 4) for all $j\geq 3$, and $t_j\equiv 0$ (mod 4) for all $j\geq 2$.
\item $s_1=2$, then $t_1\equiv 2$ (mod 4), and $s_j,t_j\equiv 0$ (mod 4) for all $j\geq 2$.
\end{enumerate}
\end{enumerate}
\end{theorem}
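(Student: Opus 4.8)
The plan is to combine the strong cospectrality characterization of Lemma~\ref{sc} with the $2$-adic valuation condition of Theorem~\ref{pst}(2). Since $r\geq 2$ forces $|\supp_{\mathbf{u}}|\geq 3$, pair state transfer between $\mathbf{u}$ and $\mathbf{v}$ is equivalent to: (a) $\mathbf{u},\mathbf{v}$ are strongly cospectral, and (b) for all $\sigma,\mu\in\supp_{\mathbf{u},\mathbf{v}}^+$ and $\lambda,\gamma\in\supp_{\mathbf{u},\mathbf{v}}^-$ we have $\nu_2(\sigma-\mu)>\nu_2(\lambda-\mu)=\nu_2(\gamma-\mu)$. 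Lemma~\ref{sc} already pins down (a): it holds iff condition~1 holds and either $s_1=t_1=1$ or $s_1=2$, and in that case $\supp_{\mathbf{u},\mathbf{v}}^-=\{\theta\}$ is a singleton while $\supp_{\mathbf{u},\mathbf{v}}^+=\spec(G)\setminus\{0,\theta\}$. So I would first record that condition~1 is forced, and then, because $\supp_{\mathbf{u},\mathbf{v}}^-$ is a singleton, the equality $\nu_2(\lambda-\mu)=\nu_2(\gamma-\mu)$ in (b) is vacuous, and condition (b) collapses to the single requirement
\begin{equation*}
\nu_2(\sigma-\mu)>\nu_2(\theta-\mu)\qquad\text{for all }\sigma,\mu\in\spec(G)\setminus\{0,\theta\}.
\end{equation*}

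Next I would translate this into arithmetic on the eigenvalue list. Using Lemma~\ref{specG}, the nonzero eigenvalues other than $\theta$ are, in the $s_1=1$ case, the partial sums $n,\,n-s_r,\,\ldots,\,n-(s_2+\cdots+s_r)$ together with $t_2+\cdots+t_r,\ldots,t_{r-1}+t_r,\,t_r$; and $\theta=2+\sum_{j=2}^r t_j = n-(s_2+\cdots+s_r)$ (using $s_1=t_1=1$, so $n=2+\sum_{j\geq 2}(s_j+t_j)$). In the $s_1=2$ case, $\theta=\sum_{j=2}^r t_j$, and the extra eigenvalue $t_1+\cdots+t_r$ of multiplicity $s_1-1=1$ appears. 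The condition $\nu_2(\sigma-\mu)>\nu_2(\theta-\mu)$ for \emph{all} $\mu,\sigma\neq 0,\theta$ is equivalent, after choosing a convenient reference eigenvalue $\mu_0$, to saying that $\nu_2(\theta-\mu_0)$ is \emph{strictly smaller} than $\nu_2(\sigma-\mu_0)$ for every other $\sigma$, i.e.\ all the nonzero non-$\theta$ eigenvalues are congruent to each other modulo a high power of $2$ while $\theta$ sits in a different, lower-valuation class. Concretely one shows this amounts to: every difference between two elements of $\spec(G)\setminus\{0,\theta\}$ has $2$-adic valuation $\geq 2$, while every difference $\sigma-\theta$ has valuation exactly $1$. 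Spelling out the partial sums, the differences among $\{n,n-s_r,\ldots\}$ are partial sums of the $s_j$'s, the differences among $\{t_r,t_{r-1}+t_r,\ldots\}$ are partial sums of the $t_j$'s, and the cross differences and the differences with $\theta$ are again partial sums of $s_j$'s and $t_j$'s (plus the constant $2$ in the $s_1=t_1=1$ case). Requiring valuation $\geq 2$ on all the former and valuation exactly $1$ on the differences involving $\theta$ forces, by an induction on $r$ peeling off $s_r$, then $t_{r-1}$, etc., exactly the congruences $s_j\equiv 0$, $t_j\equiv 0 \pmod 4$ for the appropriate ranges and the single ``offset'' term ($s_2$ in case (a), $t_1$ in case (b)) to be $\equiv 2\pmod 4$.

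For the converse I would simply verify that under the stated congruences, $\theta$ differs from every other nonzero eigenvalue by a number $\equiv 2\pmod 4$ (valuation exactly $1$) while any two of the other nonzero eigenvalues differ by a multiple of $4$ (valuation $\geq 2$), so that $\nu_2(\sigma-\mu)\geq 2>1=\nu_2(\theta-\mu)$ for all admissible $\sigma,\mu$; together with Lemma~\ref{sc} this gives both conditions of Theorem~\ref{pst}(2), hence PST. The main obstacle I anticipate is organizing the forward (necessity) direction cleanly: one must track, across the two subcases $s_1=1$ and $s_1\geq 2$ and across the different ``blocks'' of eigenvalues coming from the $s_j$'s versus the $t_j$'s, exactly which partial sums must be $\equiv 0\pmod 4$ and which single combination must be $\equiv 2\pmod 4$, and argue that no weaker congruence pattern survives the ``for all $\sigma,\mu$'' quantifier. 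I would handle this by a downward induction on the index $j$ (from $r$ to $2$), at each step isolating $s_j$ (resp.\ $t_j$) as a difference of two eigenvalues already known to be congruent mod $4$, which pins its residue; the offset term is the last one left and absorbs the mandatory valuation-$1$ discrepancy with $\theta$.
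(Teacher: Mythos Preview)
Your proposal is correct and follows essentially the same route as the paper: invoke Lemma~\ref{sc} to force condition~1 together with $s_1=t_1=1$ or $s_1=2$, identify $\supp_{\mathbf{u},\mathbf{v}}^-=\{\theta\}$ and $\supp_{\mathbf{u},\mathbf{v}}^+=\spec(G)\setminus\{0,\theta\}$, and then unwind the $2$-adic condition of Theorem~\ref{pst}(2) on the explicit eigenvalue list from Lemma~\ref{specG}. The only organizational difference is that the paper immediately simplifies the valuation condition to the single statement ``all $\nu_2(\mu-\theta)$, $\mu\in\supp_{\mathbf{u},\mathbf{v}}^+$, are equal'' and then reads the congruences off the list of differences $\mu-\theta$ directly, whereas you keep the full pairwise inequality and plan a downward induction; both work, but the paper's reformulation is shorter and avoids the bookkeeping you flag as the main obstacle. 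One small point worth making explicit in your write-up: the reason the common valuation must be \emph{exactly} $1$ (rather than merely equal across the list) is that one of the differences $\mu-\theta$ is literally $-2$ in case~(a), since $n=2+\sum_{j\ge 2}(s_j+t_j)$ gives $(t_2+\cdots+t_r)+(s_2+\cdots+s_r)-n=-2$; this is what pins $s_2\equiv 2\pmod 4$ rather than any higher power.
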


\begin{proof}
By virtue of Theorem \ref{pst}(2) and Lemma \ref{sc}, condition 1 above together with either (i) $s_1=t_1=1$ or (ii) $s_1=2$ are necessary conditions for PST. In this case, $\mathbf{u}=\frac{1}{\sqrt2}(\mathbf{e}_1-\mathbf{e}_b)$ and $\mathbf{v}=\frac{1}{\sqrt2}(\mathbf{e}_2-\mathbf{e}_b)$ for any $b\geq 3$ are strongly cospectral in $G$ with $\supp_{\mathbf{u},\mathbf{v}}^+=\spec(G)\backslash\{0,\theta\} $ and $ \supp_{\mathbf{u},\mathbf{v}}^-=\{\theta\}$, where $\theta$ is given in (\ref{theta}). To prove the above result, it suffices to show that under the preceding conditions, $\mathbf{u}$ and $\mathbf{v}$ admit PST in $G$ if and only if either condition 2a or 2b above holds, or equivalently, the $\nu_2(\mu-\theta)$'s are equal for all $\mu\in\supp_{\mathbf{u},\mathbf{v}}^+$ by Theorem \ref{pst}(2). For the case $s_1=t_1=1$, Lemma \ref{specG} gives us
\begin{align*}
    \spec(G) = \big\{ 
    & n, n-s_r, \cdots, n-(s_2+\cdots+s_r), t_2+\cdots+t_r, \cdots, t_{r-1}+t_r, t_r, 0 \big\}
\end{align*}
where $\theta=2+\sum_{j=2}^rt_j=n-(s_2+\cdots+s_r)$. Hence, if $\mu\in \spec(G)\backslash\{0\}$, then
\begin{align*}
    \mu-\theta\in \big\{ 
    & s_2+\cdots+s_r, \ s_2+\cdots+s_{r-1}, \ \ldots \ , \ s_2, \\
    &t_2+\cdots+t_r+s_2+\cdots+s_r-n, \ \ldots \ , \ t_{r-1}+t_r+s_2+\cdots+s_r-n, \\
    &t_r+s_2+\cdots+s_r-n\big\}.
\end{align*}
Now, observe that $\nu_2(s_2)=\nu_2(s_2+\sum_{j=3}^\ell s_j)=\nu_2(\sum_{j=2}^rs_j)$ for each $\ell\in\{3,\ldots,r\}$ if and only if $s_2\equiv 2$ (mod 4) and $s_j\equiv 0$ (mod 4) for $j\geq 3$. In this case, $\nu_2(s_2)=1$, and we obtain $\nu_2(\sum_{j=\ell}^rt_j+\sum_{j=2}^rs_j-n)=1$ for each $\ell\in\{2,\ldots,r\}$ if and only if $t_j\equiv 0$ (mod 4) for $j\geq 2$ (so that $n\equiv 0$ (mod 4)). Therefore, the $\nu_2(\mu-\theta)$'s are equal for all $\mu\in\supp_{\mathbf{u},\mathbf{v}}^+$ if and only if condition 2a holds. The conclusion for the case $s_1=2$ can be derived similarly.
\end{proof}

\begin{corollary}
\label{corlast}
Let $G$ be a connected threshold graph with  $\mathbf{b}=\mathbf{0}^{s_1}\mathbf{1}^{t_1}\cdots \mathbf{0}^{s_r}\mathbf{1}^{t_r}$. 
\begin{enumerate}
\item Suppose $r=s_1=1$. Then $G=K_n$ does not admit pair state transfer and vertex state transfer.
\item Suppose $r=1$ and $s_1\geq 2$. Then $G=K^c_{s_1}\vee K_{t_1}$ admits pair state transfer if and only if $s_1=2$, in which case, it occurs between $\mathbf{u}=\frac{1}{\sqrt2}(\mathbf{e}_1-\mathbf{e}_b)$ and $\mathbf{v}=\frac{1}{\sqrt2}(\mathbf{e}_2-\mathbf{e}_b)$ for any $b\in V(K_{t_1})$. Moreover, vertex state transfer occurs in $G$ between vertices $a$ and $b$ if and only if $a=1$, $b=2$, $s=2$ and $t_1\equiv 2$ (mod 4). In both cases, the minimum PST time is $\frac{\pi}{2}$.
\item Suppose $r\geq 2$. Then $G$ admits pair state transfer if and only if it admits vertex state transfer at the same time. In particular, $\mathbf{u}=\frac{1}{\sqrt2}(\mathbf{e}_1-\mathbf{e}_b)$ and $\mathbf{v}=\frac{1}{\sqrt2}(\mathbf{e}_2-\mathbf{e}_b)$ admit PST at time $\tau$ in $G$ for any $b\geq 3$ if and only if vertices 1 and 2 admit PST at time $\tau$ in $G$. In this case, the minimum PST time is $\tau=\frac{\pi}{2}$ and vertex $b$ is periodic in $G$ at time $\tau$.
\end{enumerate}
\end{corollary}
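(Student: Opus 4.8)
The plan is a case analysis on $r$ and $s_1$, built on Remark~\ref{r1}, Lemma~\ref{sc}, Theorem~\ref{pst} and Theorem~\ref{pairthresh}, together with a parallel treatment of vertex state transfer between vertices $1$ and $2$. For item~1, $G=K_n$: every pair of vertices is a pair of twins, so every pair state is fixed by the twin criterion recalled after Remark~\ref{remSC} and admits no PST; and since $\spec(K_n)=\{n^{(n-1)},0^{(1)}\}$ with $E_n\mathbf{e}_a=\mathbf{e}_a-\tfrac1n\mathbbm{1}_n$, no two distinct vertices are strongly cospectral, so there is no vertex state transfer.

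For item~2, $G=K^c_{s_1}\vee K_{t_1}$ with $s_1\ge2$: by Remark~\ref{r1} every non-fixed pair state has $|\supp_{\mathbf{u}}|=2$, so Theorem~\ref{pst}(1) reduces PST to strong cospectrality, which by Lemma~\ref{sc} forces $s_1=2$ and $\mathbf{u}=\tfrac1{\sqrt2}(\mathbf{e}_1-\mathbf{e}_b)$, $\mathbf{v}=\tfrac1{\sqrt2}(\mathbf{e}_2-\mathbf{e}_b)$ with $b\in V(K_{t_1})$ (the alternative $s_1=t_1=1$ cannot arise). When $s_1=2$ one has $\supp_{\mathbf{u}}=\{t_1,\ t_1+2\}$ --- the twin eigenvalue $t_1$ of $\mathbf{e}_1-\mathbf{e}_2$ and $n=t_1+2$ --- so the formula in Theorem~\ref{pst} gives $g=\gcd\{2,0\}=2$ and minimum time $\pi/2$. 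For vertices: two vertices lying together inside $K_{t_1}$, or inside $K^c_{s_1}$ when $s_1\ge3$, are twins in a clique or coclique of size $\ge3$, so the projection onto their common twin eigenspace has rank $\ge2$ and they are not strongly cospectral; a vertex of $K^c_{s_1}$ together with a vertex of $K_{t_1}$ is separated by a suitable $\mathbf{x}^\ell$ exactly as in the proof of Lemma~\ref{sc}; and when $s_1=2$, vertices $1$ and $2$ form the entire coclique, are always strongly cospectral, satisfy $\supp_{\mathbf{e}_1}=\{0,t_1,t_1+2\}$ (size $3$), and the vertex form of Theorem~\ref{pst}(2) --- Theorem 5.2 and Corollary 5.7 of \cite{godsil2025perfect} --- reduces to $\nu_2(t_1+2)>\nu_2(2)=1$, i.e.\ $t_1\equiv2\pmod4$, again with minimum time $\pi/2$.

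For item~3, $r\ge2$: now $|\supp_{\mathbf{u}}|\ge3$ by Remark~\ref{r1}, so Theorem~\ref{pst}(2), Lemma~\ref{sc} and Theorem~\ref{pairthresh} characterize PST between $\mathbf{u}=\tfrac1{\sqrt2}(\mathbf{e}_1-\mathbf{e}_b)$ and $\mathbf{v}=\tfrac1{\sqrt2}(\mathbf{e}_2-\mathbf{e}_b)$, $b\ge3$, by condition (2a) or (2b) of Theorem~\ref{pairthresh}. On the vertex side, $1$ and $2$ are twins whose difference $\mathbf{e}_1-\mathbf{e}_2=\mathbf{x}^1$ spans the whole $\theta$-eigenspace (of multiplicity $1$ in both cases $s_1=t_1=1$ and $s_1=2$, with $\theta$ as in Lemma~\ref{sc}); hence $E_\mu\mathbf{e}_1=E_\mu\mathbf{e}_2$ for $\mu\ne\theta$ while $E_\theta\mathbf{e}_1=-E_\theta\mathbf{e}_2\ne0$, so $1,2$ are strongly cospectral with $\supp^-_{1,2}=\{\theta\}$. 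Since $\mathbf{e}_1$ is orthogonal to no $\mathbf{x}^\ell$, we get $\supp_{\mathbf{e}_1}=\spec(G)$ and $\supp^+_{1,2}=\spec(G)\setminus\{\theta\}$ (which contains $0$), while $\mathbf{e}_b\cdot\mathbf{x}^1=0$ for $b\ge3$ gives $\theta\notin\supp_{\mathbf{e}_b}$. Reducing the PST equation through these projections, PST of $\mathbf{u},\mathbf{v}$ at time $\tau$ means $e^{i\mu\tau}=\eta$ for all $\mu\in\spec(G)\setminus\{0,\theta\}$ with $e^{i\theta\tau}=-\eta$, and PST of $1,2$ at $\tau$ means $e^{i\mu\tau}=1$ for all $\mu\in\spec(G)\setminus\{\theta\}$ with $e^{i\theta\tau}=-1$. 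Conditions (2a)/(2b) make $\theta$ the only eigenvalue $\equiv2\pmod4$ and all others $\equiv0\pmod4$; this forces every PST time to be an odd multiple of $\pi/2$, at which $\eta=1$ automatically, so the two conditions coincide, the minimum time is $\pi/2$, and $U(\tau)\mathbf{e}_b=\mathbf{e}_b$ --- so $\mathbf{e}_b$ is periodic --- because $\supp_{\mathbf{e}_b}$ consists of multiples of $4$. That no other pair of vertices admits PST follows from the strong-cospectrality restrictions used in the proof of Lemma~\ref{sc}.

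The step I expect to be the main obstacle is the phase bookkeeping in item~3: one must verify that the (automatically $b$-independent) pair-transfer condition and the vertex-transfer condition hold at exactly the same times $\tau$, not merely that both are satisfiable. This rests on pinning down $\supp_{\mathbf{e}_1}$, $\supp_{\mathbf{e}_b}$ and the residues of $\spec(G)$ modulo $4$ under conditions (2a)/(2b); once that residue structure is in hand, everything else is a routine specialization of the earlier results.
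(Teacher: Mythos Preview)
Your proof is correct in outline and takes a genuinely different route from the paper's. The paper dispatches items~2 and~3 by citing external characterizations of Laplacian vertex state transfer on threshold graphs --- \cite[Corollary~4]{alvir2016perfect} for $r=1$ and \cite[Theorem~2]{kirkland2011spin} for $r\ge2$ --- so that the corollary is literally a comparison of two existing characterizations. You instead re-derive the vertex side from scratch using the explicit eigenbasis $\{\mathbf{x}^\ell\}$: you identify which vertex pairs can be strongly cospectral (only $1$ and $2$, via the entry pattern of $\mathbf{x}^{c-1}$), compute $\supp_{\mathbf{e}_1}$ and $\supp_{\mathbf{e}_b}$ directly, and then match the pair and vertex PST conditions through a phase/residue argument. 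This is more laborious but fully self-contained, and it yields the periodicity of $\mathbf{e}_b$ as a by-product rather than as a separate observation.

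One point deserves more care. Your assertion that the residue structure modulo~$4$ alone ``forces every PST time to be an odd multiple of $\pi/2$'' is not quite enough: knowing only that $\theta\equiv 2$ and all other eigenvalues $\equiv 0\pmod 4$ would still allow $g>2$ (hence minimum time $<\pi/2$) if all eigenvalue differences shared a common odd factor. What closes this gap is that, under either (2a) or (2b), the spectrum in Lemma~\ref{specG} always contains an eigenvalue equal to $\theta\pm 2$ --- namely $t_2+\cdots+t_r=\theta-2$ in case (2a) and $n-(s_2+\cdots+s_r)=\theta+2$ in case (2b) --- so two elements of $\supp_{\mathbf u}$ differ by exactly $2$, forcing $g=2$. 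Once you insert this one-line observation, your phase argument (pair PST at $\tau$ gives $e^{2i\tau}=-1$, hence $\eta=1$, hence vertex PST at $\tau$) goes through cleanly, and the minimum-time claim follows.
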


\begin{proof}
Note that 1 follows from the facts that each pair state in $K_n$ is fixed and it does not admit vertex PST.
Now, let $r=1$ and $s_1\geq 2$ so that $G=K^c_2\vee K_{t_1}$. By Remark \ref{sc}, pair state transfer occurs in $G$ if and only if $\mathbf{u}=\frac{1}{\sqrt2}(\mathbf{e}_1-\mathbf{e}_b)$ and $\mathbf{v}=\frac{1}{\sqrt2}(\mathbf{e}_2-\mathbf{e}_b)$ for any $b\in V(K_{t_1})$ and $s_1=2$, with minimum PST time $\frac{\pi}{2}$. Meanwhile, PST occurs between vertices $a$ and $b$ in $G$ if and only if $a,b\in\{1,2\}$, $s_1=2$ and $t_1\equiv 2$ (mod 4), in which case the minimum PST time is $\frac{\pi}{2}$ \cite[Corollary 4]{alvir2016perfect}. This proves 2, while 3 is immediate from combining Theorem \ref{pairthresh} and  \cite[Theorem 2]{kirkland2011spin}.
\end{proof}

The next result is an immediate consequence of Corollary \ref{corlast}.

\begin{corollary}
If $r=1$, $s_1=2$ and $t_1\not\equiv 2$ (mod 4), then pair state transfer occurs in a connected threshold graph but not vertex state transfer. Otherwise, the two quantum phenomena are equivalent in a connected threshold graph.
\end{corollary}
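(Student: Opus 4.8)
The plan is to obtain the statement as a short case analysis of Corollary~\ref{corlast}. Let $G$ be a connected threshold graph with $\mathbf{b}=\mathbf{0}^{s_1}\mathbf{1}^{t_1}\cdots\mathbf{0}^{s_r}\mathbf{1}^{t_r}$; since $G$ is connected we have $t_r\geq 1$, and (as in Corollary~\ref{corlast}) we take $G$ to have at least three vertices so that the pair states $\mathbf{u}=\frac{1}{\sqrt2}(\mathbf{e}_1-\mathbf{e}_b)$, $\mathbf{v}=\frac{1}{\sqrt2}(\mathbf{e}_2-\mathbf{e}_b)$ with $b\geq 3$ are available. Here ``the two quantum phenomena are equivalent in $G$'' is read as: $G$ admits pair state transfer if and only if $G$ admits vertex state transfer. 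I would split on the value of $r$, and when $r=1$, on the value of $s_1$.

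First I would handle $r=1$. If $s_1=1$, then $G=K_n$, and by Corollary~\ref{corlast}(1) neither pair state transfer nor vertex state transfer occurs, so the equivalence holds (both fail). If $s_1\geq 3$, then by Corollary~\ref{corlast}(2) pair state transfer fails (it forces $s_1=2$) and vertex state transfer fails (it forces $a=1$, $b=2$, $s_1=2$), so again the equivalence holds. If $s_1=2$, then $G=K^c_2\vee K_{t_1}$ and Corollary~\ref{corlast}(2) gives: pair state transfer always occurs (between $\mathbf{u}$ and $\mathbf{v}$, at minimum time $\frac{\pi}{2}$), while vertex state transfer occurs if and only if $t_1\equiv 2\pmod 4$. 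Hence if $t_1\equiv 2\pmod 4$ both hold and the equivalence is valid, whereas if $t_1\not\equiv 2\pmod 4$ pair state transfer holds but vertex state transfer fails --- which is exactly the exceptional case claimed in the statement.

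Next I would handle $r\geq 2$, where Corollary~\ref{corlast}(3) asserts outright that $G$ admits pair state transfer if and only if it admits vertex state transfer (indeed at the same time), so the equivalence holds. Assembling the cases, the only connected threshold graphs for which pair state transfer and vertex state transfer differ are those with $r=1$, $s_1=2$ and $t_1\not\equiv 2\pmod 4$, in which case the former holds and the latter fails; in all remaining cases the two notions are equivalent. I do not expect a genuine obstacle: the statement is just a repackaging of Corollary~\ref{corlast}, and the only points requiring attention are bookkeeping ones --- carrying over the standing hypotheses of Corollary~\ref{corlast}, interpreting ``equivalent'' so as to include the cases where both phenomena fail, and recording that for $r=1$, $s_1=2$ pair state transfer holds for every admissible $t_1\geq 1$ while the residue condition on $t_1$ governs only vertex state transfer.
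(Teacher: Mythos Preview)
Your proposal is correct and follows exactly the approach the paper intends: the paper states only that the result ``is an immediate consequence of Corollary~\ref{corlast}'' without further argument, and your case analysis on $r$ and $s_1$ is precisely the unpacking of that corollary. The only difference is that you have written out the details the paper leaves implicit.
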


From the above corollary, it follows that for each integer $n\geq 3$, vertex state transfer on a connected threshold graph on $n$ vertices implies pair state transfer. Consequently, for each integer $n\geq 3$, there are more connected threshold graphs on $n$ vertices admitting pair state transfer than vertex state transfer. In particular,  $K_2^c\vee K_{n-2}$ for $n\not\equiv 2$ (mod 4) is an infinite family of connected threshold graphs admitting pair state transfer but not vertex state transfer. 

\section{Open questions}
\label{sec:6}

In this work, we characterized simply structured threshold graphs and gave sufficient conditions for them to be WHD. We also characterized Laplacian pair state transfer on threshold graphs. It is worthwhile to determine all threshold graphs (and in general, cographs) that are WHD. We also seek a characterization of cographs that admit either vertex state transfer or pair state transfer. Finally, to obtain more examples, it is desirable to enumerate WHD graphs of small order and to develop further tools to determine WHD graphs of any given order, similar to the work in \cite{breen}. This is particularly beneficial when the weak Hadamard matrix in question has pairwise orthogonal columns, since it is easier to determine the existence of vertex state transfer in such a WHD graph \cite{WeakMatrices}.

\bigskip

\noindent \textbf{Acknowledgement.}
H.~Monterde is supported by the Pacific Institute for the Mathematical Sciences through the PIMS-Simons Postdoctoral Fellowship. L.~de Lima is supported by CNPq grants 305988/2025-5 and 403963/2021-4.

\bigskip

\noindent \textbf{Declaration of competing interest.} None.

\appendix 

\section{List of all threshold graphs with structured eigenbases on at most 20 vertices}

In Table \ref{tab:tab1}, we list all connected threshold graphs on at most 20 vertices with simply structured eigenbases. The columns of this table constitute the order $n$, the binary sequence $\mathbf{b}$, the threshold graph written as a sequence of complements and joins, whether they are WHD (WHD column), and whether they admit pair state transfer (PST column). Note that the blank cells under the PST column mean that the corresponding simply structured graphs do not admit pair state transfer. Meanwhile, the blank cells under the WHD column mean that we do not know yet whether the corresponding simply structured graphs are WHD. Nonetheless, our numerical observations suggest that all simply structured graphs with blank cells under the WHD column are not WHD.

\begin{center}
\begin{longtable}{p{0.7cm} p{2cm} p{4cm} p{1cm} p{0cm}}
\caption{Table of simply structured and WHD threshold graphs}
\label{tab:tab1}\\

\hline \multicolumn{1}{l}{$n$} & \multicolumn{1}{l}{\hspace{-0.15cm} Binary seq} & \multicolumn{1}{l}{Threshold graph}  & \multicolumn{1}{l}{\hspace{-0.18cm} WHD} & \multicolumn{1}{l}{\hspace{-0.1cm} PST} \\ \hline \endfirsthead

\multicolumn{5}{c}%
{{\bfseries \tablename\ \thetable{} -- (continued)...}} \\ \hline

\multicolumn{1}{l}{$n$} & \multicolumn{1}{l}{\hspace{-0.15cm} Binary seq} & \multicolumn{1}{l}{Threshold graph}  & \multicolumn{1}{l}{\hspace{-0.18cm} WHD} & \multicolumn{1}{l}{\hspace{-0.1cm} PST} \\ 
\hline 
\endhead

\hline \multicolumn{5}{r}{{(continued)...}} \\ \hline
\endfoot
\endlastfoot
3 & $\mathbf{0}\mathbf{1}^{2}$ & $K_{3}$ & Yes & \\
\hline
4 & $\mathbf{0}\mathbf{1}^{3}$ & $ K_{4}$ & Yes &  \\
  & $\mathbf{0}^{2}\mathbf{1}^{2}$ & $K_{2}^c \vee K_{2}$ &  Yes & Yes \\
\hline
5 &  $\mathbf{0}\mathbf{1}^{4}$ & $K_5$ &  & \\
  &  $\mathbf{0}^{2}\mathbf{1}^{3}$ & $ K_{2}^c \vee K_{3}$ &  Yes & Yes\\
  \hline
6 & $\mathbf{0}\mathbf{1}^{5}$ & $K_6$ &  Yes & \\
& $\mathbf{0}^{2}\mathbf{1}^{4}$ & $K_{2}^c \vee K_{4}$ &  Yes & Yes \\
& $\mathbf{0}^{3}\mathbf{1}^{3}$ & $K_{3}^c \vee K_{3}$ &  Yes & \\
\hline
7 & $\mathbf{0}\mathbf{1}^{6}$ & $ K_{7}$ &  Yes & \\
  & $\mathbf{0}^2\mathbf{1}^5$ & $K_{2}^c \vee K_{5}$  &  Yes & Yes\\
  & $\mathbf{0}^3\mathbf{1}^3$ & $K_{3}^c \vee K_{4}$ &  Yes &\\
  \hline
8 & $\mathbf{0}\mathbf{1}^7$ & $ K_{8}$ &  Yes& \\
&$\mathbf{0}^{2}\mathbf{1}^6$ & $ K_{2}^c \vee K_{6}$ &  Yes & Yes\\
&$\mathbf{0}^3\mathbf{1}^5$ & $ K_{3}^c \vee K_{5}$ &  Yes&\\
&$\mathbf{0}^4 \mathbf{1}^4$ & $ K_{4}^c \vee K_{4}$ &  Yes&\\
&$\mathbf{0}\mathbf{1}\mathbf{0}^2\mathbf{1}^4$ & $ (K_{2} \sqcup K_{2}^c) \vee K_{4}$ &  Yes& \\
\hline
9& $\mathbf{0}\mathbf{1}^8$ & $K_{9}$ &  Yes& \\
& $\mathbf{0}^2\mathbf{1}^7$ & $ K_{2}^c \vee K_{7}$ &  Yes& \\
& $\mathbf{0}^3\mathbf{1}^6$ & $ K_{3}^c \vee K_{6}$ & &\\
& $\mathbf{0}^4\mathbf{1}^5$ & $ K_{4}^c \vee K_{5}$ &  Yes&\\
& $\mathbf{0}\mathbf{1}\mathbf{0}^2\mathbf{1}^{5}$ & $(K_{2} \sqcup K_{2}^c )\vee K_{5}$ & & Yes \\
\hline
10&$\mathbf{0}\mathbf{1}^9$ & $ K_{10}$ & Yes& \\
&$\mathbf{0}^2 \mathbf{1}^8$ & $ K_{2}^c \vee K_{8}$ & Yes& Yes \\
&$\mathbf{0}^3 \mathbf{1}^7$ & $ K_{3}^c \vee K_{7}$ & &\\
&$\mathbf{0}^4 \mathbf{1}^6$ & $ K_{4}^c \vee K_{6}$ & Yes&\\
&$\mathbf{0}^5 \mathbf{1}^5$ & $ K_{5}^c \vee K_{5}$ & Yes&\\
&$\mathbf{0}\mathbf{1} \mathbf{0}^2 \mathbf{1}^6$ & $ (K_{2} \sqcup K_{2}^c) \vee K_{6}$ & Yes&\\
&$\mathbf{0}\mathbf{1}\mathbf{0}^3 \mathbf{1}^5$ & $ (K_{2} \sqcup K_{3}^c )\vee K_{5}$ & Yes&\\
\hline
11&$\mathbf{0}\mathbf{1}^{10}$ & $ K_{11}$ & Yes&\\
&$\mathbf{0}^2 \mathbf{1}^9$ & $ K_{2}^c \vee K_{9}$ & Yes&Yes\\
&$\mathbf{0}^3 \mathbf{1}^8$ & $ K_{3}^c \vee K_{8}$ & &\\
&$\mathbf{0}^4 \mathbf{1}^7$ & $ K_{4}^c \vee K_{7}$ & &\\
&$\mathbf{0}^5 \mathbf{1}^6$ & $ K_{5}^c \vee K_{6}$ & Yes&\\
&$\mathbf{0}\mathbf{1}\mathbf{0}^2 \mathbf{1}^7$ & $ (K_{2} \sqcup K_{2}^c )\vee K_{7}$ & &\\
&$\mathbf{0}\mathbf{1}\mathbf{0}^3 \mathbf{1}^6$ & $( K_{2} \sqcup K_{3}^c )\vee K_{6}$ & Yes &\\
\hline
12&$\mathbf{0}\mathbf{1}^{11}$ & $ K_{12}$ & Yes&\\
&$\mathbf{0}^2 \mathbf{1}^{10}$ & $ K_{2}^c \vee K_{10}$ & Yes & Yes \\
&$\mathbf{0}^3 \mathbf{1}^{9}$ & $ K_{3}^c \vee K_{9}$ & Yes &\\
&$\mathbf{0}^4 \mathbf{1}^8$ & $ K_{4}^c \vee K_{8}$ & &\\
&$\mathbf{0}^5 \mathbf{1}^{7}$ & $ K_{5}^c \vee K_{7}$ & Yes&\\
&$\mathbf{0}^6 \mathbf{1}^{6}$ & $ K_{6}^c \vee K_{6}$ & Yes&\\
&$\mathbf{0}\mathbf{1}\mathbf{0}^2 \mathbf{1}^8$ & $ (K_{2} \sqcup K_{2}^c )\vee K_{8}$ & & Yes\\
&$\mathbf{0}\mathbf{1}\mathbf{0}^3 \mathbf{1}^7$ & $ (K_{2} \sqcup K_{3}^c )\vee K_{7}$ & Yes&\\
&$\mathbf{0}\mathbf{1}\mathbf{0}^4 \mathbf{1}^6$ & $ (K_{2} \sqcup K_{4}^c )\vee K_{6}$ & Yes&\\
&$\mathbf{0}\mathbf{1}^2 \mathbf{0}^3 \mathbf{1}^6$ & $ (K_{3} \sqcup K_{3}^c )\vee K_{6}$ & Yes&\\
\hline
13&$\mathbf{0}\mathbf{1}^{12}$ & $ K_{13}$ & Yes&\\
&$\mathbf{0}^2 \mathbf{1}^{11}$ & $ K_{2}^c \vee K_{11}$ & Yes&Yes\\
&$\mathbf{0}^3 \mathbf{1}^{10}$ & $ K_{3}^c \vee K_{10}$ & Yes&\\
&$\mathbf{0}^{4} \mathbf{1}^{9}$ & $ K_{4}^c \vee K_{9}$ & &\\
&$\mathbf{0}^{5} \mathbf{1}^{8}$ & $ K_{5}^c \vee K_{8}$ & &\\
&$\mathbf{0}^6 \mathbf{1}^{7}$ & $ K_{6}^c \vee K_{7}$ & Yes&\\
&$\mathbf{0}\mathbf{1}\mathbf{0}^2 \mathbf{1}^{9}$ & $( K_{2} \sqcup K_{2}^c )\vee K_{9}$ & &\\
&$\mathbf{0}\mathbf{1}\mathbf{0}^3 \mathbf{1}^8$ & $ (K_{2} \sqcup K_{3}^c )\vee K_{8}$ & &\\
&$\mathbf{0}\mathbf{1}\mathbf{0}^{4} \mathbf{1}^{7}$ & $( K_{2} \sqcup K_{4}^c )\vee K_{7}$ & Yes&\\
&$\mathbf{0}\mathbf{1}^2 \mathbf{0}^3 \mathbf{1}^{7}$ & $( K_{3} \sqcup K_{3}^c )\vee K_{7}$ & Yes&\\
\hline
14&$\mathbf{0}\mathbf{1}^{13}$ & $ K_{14}$ & Yes&\\
&$\mathbf{0}^2 \mathbf{1}^{12}$ & $ K_{2}^c \vee K_{12}$ & Yes&Yes\\
&$\mathbf{0}^3 \mathbf{1}^{11}$ & $ K_{3}^c \vee K_{11}$ & Yes&\\
&$\mathbf{0}^4 \mathbf{1}^{10}$ & $ K_{4}^c \vee K_{10}$ & &\\
&$\mathbf{0}^5 \mathbf{1}^{9}$ & $ K_{5}^c \vee K_{9}$ & &\\
&$\mathbf{0}^6 \mathbf{1}^8$ & $ K_{6}^c \vee K_{8}$ & Yes&\\
&$\mathbf{0}^7 \mathbf{1}^7$ & $ K_{7}^c \vee K_{7}$ & Yes&\\
&$\mathbf{0}\mathbf{1}\mathbf{0}^2 \mathbf{1}^{10}$ & $ (K_{2} \sqcup K_{2}^c )\vee K_{10}$ & &\\
&$\mathbf{0}\mathbf{1}\mathbf{0}^3 \mathbf{1}^9$ & $ (K_{2} \sqcup K_{3}^c )\vee K_{9}$ & &\\
&$\mathbf{0}\mathbf{1}\mathbf{0}^4 \mathbf{1}^8$ & $ (K_{2} \sqcup K_{4}^c )\vee K_{8}$ & Yes&\\
&$\mathbf{0}\mathbf{1}^2 \mathbf{0}^3 \mathbf{1}^8$ & $( K_{3} \sqcup K_{3}^c )\vee K_{8}$ & Yes&\\
&$\mathbf{0}\mathbf{1}\mathbf{0}^5 \mathbf{1}^7$ & $ (K_{2} \sqcup K_{5}^c )\vee K_{7}$ & &\\
&$\mathbf{0}\mathbf{1}^2 \mathbf{0}^4 \mathbf{1}^7$ & $( K_{3} \sqcup K_{4}^c )\vee K_{7}$ & Yes&\\
\hline
15&$\mathbf{0}\mathbf{1}^{14}$ & $ K_{15}$ & Yes&\\
&$\mathbf{0}^2 \mathbf{1}^{13}$ & $ K_{2}^c \vee K_{13}$ & Yes&Yes\\
&$\mathbf{0}^3 \mathbf{1}^{12}$ & $ K_{3}^c \vee K_{12}$ & Yes&\\
&$\mathbf{0}^4 \mathbf{1}^{11}$ & $ K_{4}^c \vee K_{11}$ & &\\
&$\mathbf{0}^5 \mathbf{1}^{10}$ & $ K_{5}^c \vee K_{10}$ & &\\
&$\mathbf{0}^6 \mathbf{1}^9$ & $ K_{6}^c \vee K_{9}$ & &\\
&$\mathbf{0}^7 \mathbf{1}^8$ & $ K_{7}^c \vee K_{8}$ & Yes&\\
&$\mathbf{0}\mathbf{1}\mathbf{0}^2 \mathbf{1}^{11}$ & $ (K_{2} \sqcup K_{2}^c )\vee K_{11}$ & &\\
&$\mathbf{0}\mathbf{1}\mathbf{0}^3 \mathbf{1}^{10}$ & $ (K_{2} \sqcup K_{3}^c )\vee K_{10}$ & &\\
&$\mathbf{0}\mathbf{1}\mathbf{0}^4 \mathbf{1}^{9}$ & $ (K_{2} \sqcup K_{4}^c )\vee K_{9}$ & &\\
&$\mathbf{0}\mathbf{1}^2 \mathbf{0}^3 \mathbf{1}^{9}$ & $ (K_{3} \sqcup K_{3}^c )\vee K_{9}$ & &\\
&$\mathbf{0}\mathbf{1}\mathbf{0}^5 \mathbf{1}^{7}$ & $ (K_{2} \sqcup K_{5}^c )\vee K_{8}$ & Yes&\\
&$\mathbf{0}\mathbf{1}^2 \mathbf{0}^4 \mathbf{1}^8$ & $ (K_{3} \sqcup K_{4}^c )\vee K_{8}$ & Yes&\\
\hline
16&$\mathbf{0}\mathbf{1}^{15}$ & $ K_{16}$ & Yes&\\
&$\mathbf{0}^2 \mathbf{1}^{14}$ & $ K_{2}^c \vee K_{14}$ & Yes& Yes\\
&$\mathbf{0}^3 \mathbf{1}^{13}$ & $ K_{3}^c \vee K_{13}$ & Yes&\\
&$\mathbf{0}^4 \mathbf{1}^{12}$ & $ K_{4}^c \vee K_{12}$ & Yes&\\
&$\mathbf{0}^5 \mathbf{1}^{11}$ & $ K_{5}^c \vee K_{11}$ & &\\
&$\mathbf{0}^6 \mathbf{1}^{10}$ & $ K_{6}^c \vee K_{10}$ & &\\
&$\mathbf{0}^7 \mathbf{1}^9$ & $ K_{7}^c \vee K_{9}$ & Yes&\\
&$\mathbf{0}^8 \mathbf{1}^8$ & $ K_{8}^c \vee K_{8}$ & Yes&\\
&$\mathbf{0}\mathbf{1}\mathbf{0}^2 \mathbf{1}^{12}$ & $ (K_{2} \sqcup K_{2}^c) \vee K_{12}$ & Yes& Yes\\
&$\mathbf{0}\mathbf{1}\mathbf{0}^3 \mathbf{1}^{11}$ & $ (K_{2} \sqcup K_{3}^c )\vee K_{11}$ & &\\
&$\mathbf{0}\mathbf{1}\mathbf{0}^4 \mathbf{1}^{10}$ & $ (K_{2} \sqcup K_{4}^c )\vee K_{10}$ & &\\
&$\mathbf{0}\mathbf{1}^2 \mathbf{0}^3 \mathbf{1}^{10}$ & $( K_{3} \sqcup K_{3}^c) \vee K_{10}$ & &\\
&$\mathbf{0}\mathbf{1}\mathbf{0}^{5} \mathbf{1}^9$ & $ (K_{2} \sqcup K_{5}^c )\vee K_{9}$ & Yes&\\
&$\mathbf{0}\mathbf{1}^2 \mathbf{0}^4 \mathbf{1}^9$ & $ (K_{3} \sqcup K_{4}^c )\vee K_{9}$ & Yes&\\
&$\mathbf{0}\mathbf{1}\mathbf{0}^6\mathbf{1}^8$ & $ (K_{2} \sqcup K_{6}^c )\vee K_{8}$ & Yes& Yes\\
&$\mathbf{0}\mathbf{1}^2\mathbf{0}^5 \mathbf{1}^8$ & $ (K_{3} \sqcup K_{5}^c )\vee K_{8}$ & Yes&\\
&$\mathbf{0}\mathbf{1}^3 \mathbf{0}^4 \mathbf{1}^8$ & $ (K_{4} \sqcup K_{4}^c )\vee K_{8}$ & Yes&\\
&$\mathbf{0}^2 \mathbf{1}^2 \mathbf{0}^4 \mathbf{1}^8$ & $ ((K_{2}^c \vee K_{2}) \sqcup K_{4}^c) \vee K_{8}$ & Yes& Yes\\
\hline
17&$\mathbf{0}\mathbf{1}^{16}$ & $ K_{17}$ &Yes &\\
&$\mathbf{0}^2 \mathbf{1}^{15}$ & $ K_{2}^c \vee K_{15}$ &Yes &Yes\\
&$\mathbf{0}^{3} \mathbf{1}^{14}$ & $ K_{3}^c \vee K_{14}$ &Yes &\\
&$\mathbf{0}^{4} \mathbf{1}^{13}$ & $ K_{4}^c \vee K_{13}$ &Yes &\\
&$\mathbf{0}^5 \mathbf{1}^{12}$ & $ K_{5}^c \vee K_{12}$ & &\\
&$\mathbf{0}^6 \mathbf{1}^{11}$ & $ K_{6}^c \vee K_{11}$ & &\\
&$\mathbf{0}^{7} \mathbf{1}^{10}$ & $ K_{7}^c \vee K_{10}$ & &\\
&$\mathbf{0}^8 \mathbf{1}^{9}$ & $ K_{8}^c \vee K_{9}$ &Yes &\\
&$\mathbf{0}\mathbf{1}\mathbf{0}^2 \mathbf{1}^{13}$ & $ (K_{2} \sqcup K_{2}^c) \vee K_{13}$ &Yes &\\
&$\mathbf{0}\mathbf{1}\mathbf{0}^3 \mathbf{1}^{12}$ & $ (K_{2} \sqcup K_{3}^c )\vee K_{12}$ & &\\
&$\mathbf{0}\mathbf{1}\mathbf{0}^4 \mathbf{1}^{11}$ & $ (K_{2} \sqcup K_{4}^c )\vee K_{11}$ & &\\
&$\mathbf{0}\mathbf{1}^2 \mathbf{0}^3 \mathbf{1}^{11}$ & $( K_{3} \sqcup K_{3}^c) \vee K_{11}$ & &\\
&$\mathbf{0}\mathbf{1}\mathbf{0}^5 \mathbf{1}^{10}$ & $ (K_{2} \sqcup K_{5}^c )\vee K_{10}$ & &\\
&$\mathbf{0}\mathbf{1}^2 \mathbf{0}^4 \mathbf{1}^{10}$ & $( K_{3} \sqcup K_{4}^c) \vee K_{10}$ & &\\
&$\mathbf{0}\mathbf{1}\mathbf{0}^6 \mathbf{1}^9$ & $ (K_{2} \sqcup K_{6}^c )\vee K_{9}$ &Yes &\\
&$\mathbf{0}\mathbf{1}^2 \mathbf{0}^5 \mathbf{1}^9$ & $( K_{3} \sqcup K_{5}^c )\vee K_{9}$ &Yes &\\
&$\mathbf{0}\mathbf{1}^3 \mathbf{0}^4 \mathbf{1}^9$ & $( K_{4} \sqcup K_{4}^c )\vee K_{9}$ &Yes &\\
&$\mathbf{0}^2 \mathbf{1}^2 \mathbf{0}^4 \mathbf{1}^{9}$ & $(( K_{2}^c \vee K_{2}) \sqcup K_{4}^c )\vee K_{9}$ &Yes &\\
\hline 
18&$\mathbf{0}\mathbf{1}^{17}$ & $ K_{18}$ &Yes &\\
&$\mathbf{0}^2 \mathbf{1}^{16}$ & $ K_{2}^c \vee K_{16}$ &Yes &Yes\\
&$\mathbf{0}^3 \mathbf{1}^{15}$ & $ K_{3}^c \vee K_{15}$ &Yes &\\
&$\mathbf{0}^4 \mathbf{1}^{14}$ & $ K_{4}^c \vee K_{14}$ &Yes &\\
&$\mathbf{0}^5 \mathbf{1}^{13}$ & $ K_{5}^c \vee K_{13}$ & &\\
&$\mathbf{0}^6 \mathbf{1}^{12}$ & $ K_{6}^c \vee K_{12}$ & &\\
&$\mathbf{0}^7 \mathbf{1}^{11}$ & $ K_{7}^c \vee K_{11}$ & &\\
&$\mathbf{0}^8 \mathbf{1}^{10}$ & $ K_{8}^c \vee K_{10}$ &Yes &\\
&$\mathbf{0}^9 \mathbf{1}^9$ & $ K_{9}^c \vee K_{9}$ &Yes &\\
&$\mathbf{0}\mathbf{1}\mathbf{0}^2 \mathbf{1}^{14}$ & $ (K_{2} \sqcup K_{2}^c) \vee K_{14}$ &Yes &\\
&$\mathbf{0}\mathbf{1}\mathbf{0}^3 \mathbf{1}^{13}$ & $ (K_{2} \sqcup K_{3}^c )\vee K_{13}$ & &\\
&$\mathbf{0}\mathbf{1} \mathbf{0}^4 \mathbf{1}^{12}$ & $ (K_{2} \sqcup K_{4}^c )\vee K_{12}$ & &\\
&$\mathbf{0}\mathbf{1}^2 \mathbf{0}^3 \mathbf{1}^{12}$ & $( K_{3} \sqcup K_{3}^c) \vee K_{12}$ & &\\
&$\mathbf{0}\mathbf{1} \mathbf{0}^5 \mathbf{1}^{11}$ & $ (K_{2} \sqcup K_{5}^c )\vee K_{11}$ & &\\
&$\mathbf{0}\mathbf{1}^2 \mathbf{0}^4 \mathbf{1}^{11}$ & $( K_{3} \sqcup K_{4}^c) \vee K_{11}$ & &\\
&$\mathbf{0}\mathbf{1} \mathbf{0}^6 \mathbf{1}^{10}$ & $ (K_{2} \sqcup K_{6}^c )\vee K_{10}$ &Yes &\\
&$\mathbf{0}\mathbf{1}^2 \mathbf{0}^5 \mathbf{1}^{10}$ & $( K_{3} \sqcup K_{5}^c) \vee K_{10}$ &Yes &\\
&$\mathbf{0}\mathbf{1}^3 \mathbf{0}^4 \mathbf{1}^{10}$ & $( K_{4} \sqcup K_{4}^c )\vee K_{10}$ & Yes&\\
&$\mathbf{0}^2 \mathbf{1}^2 \mathbf{0}^4 \mathbf{1}^{10}$ & $( (K_{2}^c \vee K_{2} )\sqcup K_{4}^c) \vee K_{10}$ &Yes &\\
&$\mathbf{0}\mathbf{1}\mathbf{0}^7 \mathbf{1}^9$ & $ (K_{2} \sqcup K_{7}^c )\vee K_{9}$ & Yes&\\
&$\mathbf{0}\mathbf{1}^2 \mathbf{0}^6 \mathbf{1}^9$ & $( K_{3} \sqcup K_{6}^c )\vee K_{9}$ & &\\
&$\mathbf{0}\mathbf{1}^3 \mathbf{0}^5 \mathbf{1}^9$ & $ (K_{4} \sqcup K_{5}^c )\vee K_{9}$ & Yes &\\
&$\mathbf{0}^2 \mathbf{1}^2 \mathbf{0}^5 \mathbf{1}^9$ & $( (K_{2}^c \vee K_{2}) \sqcup K_{5}^c )\vee K_{9}$ &Yes &\\
\hline
19&$\mathbf{0}\mathbf{1}^{18}$ & $ K_{19}$ &Yes &\\
&$\mathbf{0}^2 \mathbf{1}^{17}$ & $ K_{2}^c \vee K_{17}$ &Yes & Yes\\
&$\mathbf{0}^3 \mathbf{1}^{16}$ & $ K_{3}^c \vee K_{16}$ &  &\\
&$\mathbf{0}^4 \mathbf{1}^{15}$ & $ K_{4}^c \vee K_{15}$ &Yes &\\
&$\mathbf{0}^6 \mathbf{1}^{13}$ & $ K_{6}^c \vee K_{13}$ & &\\
&$\mathbf{0}^7 \mathbf{1}^{12}$ & $ K_{7}^c \vee K_{12}$ & &\\
&$\mathbf{0}^8 \mathbf{1}^{11}$ & $ K_{8}^c \vee K_{11}$ & &\\
&$\mathbf{0}^9 \mathbf{1}^{10}$ & $ K_{9}^c \vee K_{10}$ &Yes &\\
&$\mathbf{0} \mathbf{1} \mathbf{0}^2 \mathbf{1}^{15}$ & $ (K_{2} \sqcup K_{2}^c) \vee K_{15}$ &Yes &\\
&$\mathbf{0} \mathbf{1} \mathbf{0}^3 \mathbf{1}^{14}$ & $ (K_{2} \sqcup K_{3}^c) \vee K_{14}$ & &\\
&$\mathbf{0} \mathbf{1} \mathbf{0}^4 \mathbf{1}^{13}$ & $ (K_{2} \sqcup K_{4}^c) \vee K_{13}$ & &\\
&$\mathbf{0} \mathbf{1}^{2} \mathbf{0}^3 \mathbf{1}^{13}$ & $ (K_{3} \sqcup K_{3}^c) \vee K_{13}$ & &\\
&$\mathbf{0} \mathbf{1} \mathbf{0}^5 \mathbf{1}^{12}$ & $ (K_{2} \sqcup K_{5}^c) \vee K_{12}$ & &\\
&$\mathbf{0} \mathbf{1}^{2} \mathbf{0}^4 \mathbf{1}^{12}$ & $ (K_{3} \sqcup K_{4}^c) \vee K_{12}$ & &\\
&$\mathbf{0} \mathbf{1} \mathbf{0}^6 \mathbf{1}^{11}$ & $ (K_{2} \sqcup K_{6}^c) \vee K_{11}$ & &\\
&$\mathbf{0} \mathbf{1}^{2} \mathbf{0}^5 \mathbf{1}^{11}$ & $ (K_{3} \sqcup K_{5}^c) \vee K_{11}$ & &\\
&$\mathbf{0} \mathbf{1}^{3} \mathbf{0}^4 \mathbf{1}^{11}$ & $ (K_{4} \sqcup K_{4}^c) \vee K_{11}$ & &\\
&$\mathbf{0}^{2} \mathbf{1}^{2} \mathbf{0}^4 \mathbf{1}^{11}$ & $((K_{2}^c \vee K_{2}) \sqcup K_{4}^c) \vee K_{11}$ & &\\
&$\mathbf{0} \mathbf{1} \mathbf{0}^7 \mathbf{1}^{10}$ & $(K_{2} \sqcup K_{7}^c) \vee K_{10}$ &Yes &\\
&$\mathbf{0} \mathbf{1}^{2} \mathbf{0}^6 \mathbf{1}^{10}$ & $(K_{3} \sqcup K_{6}^c) \vee K_{10}$ & &\\
&$\mathbf{0} \mathbf{1}^{3} \mathbf{0}^5 \mathbf{1}^{10}$ & $(K_{4} \sqcup K_{5}^c) \vee K_{10}$ &Yes &\\
&$\mathbf{0}^{2} \mathbf{1}^{2} \mathbf{0}^5 \mathbf{1}^{10}$ & $((K_{2}^{c} \vee K_{2}) \sqcup K_{5}^{c}) \vee K_{10}$ &Yes  &\\
\hline
20&$\mathbf{0}\mathbf{1}^{19}$ & $ K_{20}$ &Yes &\\
&$\mathbf{0}^2 \mathbf{1}^{18}$ & $ K_{2}^c \vee K_{18}$ &Yes & Yes\\
&$\mathbf{0}^3 \mathbf{1}^{17}$ & $ K_{3}^c \vee K_{17}$ &  &\\
&$\mathbf{0}^4 \mathbf{1}^{16}$ & $ K_{4}^c \vee K_{16}$ &Yes &\\
&$\mathbf{0}^5 \mathbf{1}^{15}$ & $ K_{5}^c \vee K_{15}$ &Yes &\\
&$\mathbf{0}^6 \mathbf{1}^{14}$ & $ K_{6}^c \vee K_{14}$ & &\\
&$\mathbf{0}^7 \mathbf{1}^{13}$ & $ K_{7}^c \vee K_{13}$ & &\\
&$\mathbf{0}^8 \mathbf{1}^{12}$ & $ K_{8}^c \vee K_{12}$ &  &\\
&$\mathbf{0}^9 \mathbf{1}^{11}$ & $ K_{9}^c \vee K_{11}$ & Yes &\\
&$\mathbf{0}^{10} \mathbf{1}^{10}$ & $ K_{10}^c \vee K_{10}$ & Yes &\\

&$\mathbf{0} \mathbf{1} \mathbf{0}^2 \mathbf{1}^{16}$ & $ (K_{2} \sqcup K_{2}^c) \vee K_{16}$ &Yes &Yes\\
&$\mathbf{0} \mathbf{1} \mathbf{0}^3 \mathbf{1}^{15}$ & $ (K_{2} \sqcup K_{3}^c) \vee K_{15}$ &Yes &\\
&$\mathbf{0} \mathbf{1} \mathbf{0}^4 \mathbf{1}^{14}$ & $ (K_{2} \sqcup K_{4}^c) \vee K_{14}$ & &\\
&$\mathbf{0} \mathbf{1}^{2} \mathbf{0}^3 \mathbf{1}^{14}$ & $ (K_{3} \sqcup K_{3}^c) \vee K_{14}$ & &\\
&$\mathbf{0} \mathbf{1} \mathbf{0}^5 \mathbf{1}^{13}$ & $ (K_{2} \sqcup K_{5}^c) \vee K_{13}$ & &\\
&$\mathbf{0} \mathbf{1}^{2} \mathbf{0}^4 \mathbf{1}^{13}$ & $ (K_{3} \sqcup K_{4}^c) \vee K_{13}$ & &\\
&$\mathbf{0} \mathbf{1} \mathbf{0}^6 \mathbf{1}^{12}$ & $ (K_{2} \sqcup K_{6}^c) \vee K_{12}$ & &\\
&$\mathbf{0} \mathbf{1}^{2} \mathbf{0}^5 \mathbf{1}^{12}$ & $ (K_{3} \sqcup K_{5}^c) \vee K_{12}$ & &\\
&$\mathbf{0} \mathbf{1}^{3} \mathbf{0}^4 \mathbf{1}^{12}$ & $ (K_{4} \sqcup K_{4}^c) \vee K_{12}$ & &\\
&$\mathbf{0}^{2} \mathbf{1}^{2} \mathbf{0}^4 \mathbf{1}^{12}$ & $((K_{2}^c \vee K_{2}) \sqcup K_{4}^c) \vee K_{12}$ & &Yes\\
&$\mathbf{0} \mathbf{1} \mathbf{0}^7 \mathbf{1}^{11}$ & $(K_{2} \sqcup K_{7}^c) \vee K_{11}$ &Yes &\\
&$\mathbf{0} \mathbf{1}^{2} \mathbf{0}^6 \mathbf{1}^{11}$ & $(K_{3} \sqcup K_{6}^c) \vee K_{11}$ & &\\
&$\mathbf{0} \mathbf{1}^{3} \mathbf{0}^5 \mathbf{1}^{11}$ & $(K_{4} \sqcup K_{5}^c) \vee K_{11}$ &Yes &\\
&$\mathbf{0}^{2} \mathbf{1}^{2} \mathbf{0}^5 \mathbf{1}^{11}$ & $((K_{2}^{c} \vee K_{2}) \sqcup K_{5}^{c}) \vee K_{11}$ &Yes  &\\
&$\mathbf{0} \mathbf{1} \mathbf{0}^8 \mathbf{1}^{10}$ & $(K_{2} \sqcup K_{8}^{c}) \vee K_{10}$ &Yes  &\\
&$\mathbf{0} \mathbf{1}^{2} \mathbf{0}^7 \mathbf{1}^{10}$ & $(K_{3} \sqcup K_{7}^{c}) \vee K_{10}$ &  &\\
&$\mathbf{0} \mathbf{1}^{3} \mathbf{0}^6 \mathbf{1}^{10}$ & $(K_{4} \sqcup K_{6}^{c}) \vee K_{10}$ & Yes &\\
&$\mathbf{0}^{2} \mathbf{1}^{2} \mathbf{0}^6 \mathbf{1}^{10}$ & $((K_{2}^{c} \vee K_{2}) \sqcup  K_{6}^{c}) \vee K_{10}$ & Yes &\\
&$\mathbf{0} \mathbf{1}^{4} \mathbf{0}^5 \mathbf{1}^{10}$ & $ (K_{5} \sqcup K_{5}^c) \vee K_{10}$ &Yes  &\\
&$\mathbf{0}^{2} \mathbf{1}^{3} \mathbf{0}^5 \mathbf{1}^{10}$ & $((K_{2}^{c} \vee K_{3}) \sqcup  K_{5}^{c}) \vee K_{10}$ & Yes &\\
\hline

\end{longtable}
\end{center}

\bibliographystyle{plain}
\bibliography{mybibfile}

\end{document}